\theoremstyle{plain}
\newtheorem{proposition}{Proposition}[section]
\newtheorem{theorem}[proposition]{Theorem}
\newtheorem{lemma}[proposition]{Lemma}
\newtheorem{corollary}[proposition]{Corollary}
\theoremstyle{definition}
\newtheorem{definition}[proposition]{Definition}
\theoremstyle{remark}
\newtheorem{remark}[proposition]{Remark}
\newtheorem{conjecture}[proposition]{Conjecture}
\DeclareMathOperator{\Aff}{Aff}
\DeclareMathOperator{\Aut}{Aut}
\DeclareMathOperator{\Real}{Re}
\DeclareMathOperator{\Imaginary}{Im}
\DeclareMathOperator{\GL}{GL}
\DeclareMathOperator{\Hol}{Hol}
\DeclareMathOperator{\arcosh}{arcosh} 
\DeclareMathOperator{\arctanh}{tanh^{-1}}
\DeclareMathOperator{\Euc}{Euc} 
\DeclareMathOperator{\Wt}{wt} 
\DeclareMathOperator{\Id}{Id}
\DeclareMathOperator{\Cc}{\mathcal{C}}
\DeclareMathOperator{\Dc}{\mathcal{D}}
\DeclareMathOperator{\Ec}{\mathcal{E}}
\DeclareMathOperator{\Hc}{\mathcal{H}}
\DeclareMathOperator{\Lc}{\mathcal{L}}
\DeclareMathOperator{\Nc}{\mathcal{N}}
\DeclareMathOperator{\Oc}{\mathcal{O}}
\DeclareMathOperator{\Cb}{\mathbb{C}}
\DeclareMathOperator{\Nb}{\mathbb{N}}
\DeclareMathOperator{\Rb}{\mathbb{R}}
\DeclareMathOperator{\Xb}{\mathbb{X}}
\DeclareMathOperator{\Zb}{\mathbb{Z}}
\DeclareMathOperator{\Qb}{\mathbb{Q}}
\newcommand{\abs}[1]{\left|#1\right|}
\newcommand{\norm}[1]{\left\|#1\right\|}
\newcommand{\wh}[1]{\widehat{#1}}
\begin{document}

\title[Characterizing domains by their limit set]{Characterizing domains by the limit set of their automorphism group}
\author{Andrew M. Zimmer}\address{Department of Mathematics, University of Chicago, Chicago, IL 60637.}
\email{aazimmer@uchicago.edu}
\date{\today}
\keywords{Convex domains, Kobayashi metric, holomorphic maps, Wolff-Denjoy theorem, automorphism group}
\subjclass[2010]{}

\begin{abstract} 
In this paper we study the automorphism group of smoothly bounded convex domains. We show that such a domain is biholomorphic to a ``polynomial ellipsoid'' (that is, a domain defined by a weighted homogeneous balanced polynomial) if and only if the limit set of the automorphism group intersects at least two closed complex faces of the set. The proof relies on a detailed study of the geometry of the Kobayashi metric and ideas from the theory of non-positively curved metric spaces. We also obtain a number of other results including the Greene-Krantz conjecture in the case of uniform non-tangential convergence, new results about continuous extensions (of biholomorphisms and complex geodesics), and a new Wolff-Denjoy theorem.  \end{abstract} 
\maketitle

\section{Introduction}

Given a bounded domain $\Omega \subset \Cb^d$ let $\Aut(\Omega)$ be the automorphism group of $\Omega$, that is the group of biholomorphisms of $\Omega$. The group $\Aut(\Omega)$ is a Lie group and acts properly on $\Omega$. When $\partial \Omega$ has nice properties there are believed to be few domains with large automorphism group. For instance:

\begin{theorem}[Wong and Rosay Ball Theorem \cite{R1979, W1977}]
Suppose $\Omega \subset \Cb^d$ is a bounded strongly pseudoconvex domain. Then $\Aut(\Omega)$ is non-compact if and only if $\Omega$ is biholomorphic to the unit ball. 
\end{theorem}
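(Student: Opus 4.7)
The easy direction is immediate: the group $\Aut(\Bb^d) = \PSU(d,1)$ acts transitively on $\Bb^d$ and is non-compact, so any domain biholomorphic to $\Bb^d$ has non-compact automorphism group. The real content is the converse, for which my plan is to implement Pinchuk's scaling method.

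First I would use non-compactness to produce a boundary orbit accumulation point: since $\Aut(\Omega)$ acts properly on $\Omega$ and is non-compact, for any base point $p \in \Omega$ there is a sequence $\varphi_n \in \Aut(\Omega)$ with $q_n := \varphi_n(p) \to q \in \partial \Omega$. Strong pseudoconvexity at $q$ means that, after a local biholomorphic change of coordinates centered at $q$, the defining function has the normal form $\rho(z) = \Real(z_d) + \sum_{j=1}^{d-1}\abs{z_j}^2 + O(\abs{z}^3)$, so $\Omega$ is locally well approximated by the Siegel half-space $\Dc = \{z : \Real(z_d) + \sum_{j=1}^{d-1}\abs{z_j}^2 < 0\}$, which is biholomorphic to the ball via the Cayley transform.

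Next I would introduce the anisotropic dilations $\Lambda_n$ adapted to the behavior near $q$: translate so the closest boundary point to $q_n$ goes to the origin, then scale the tangential directions by $1/\sqrt{s_n}$ and the complex normal direction by $1/s_n$, where $s_n = \mathrm{dist}(q_n, \partial \Omega)$. A direct computation with $\rho$ shows that the scaled domains $\Omega_n := \Lambda_n(\Omega)$ converge (in the Hausdorff sense on compact subsets, equivalently in the sense of Carathéodory kernels) to $\Dc$, and that $\Lambda_n(q_n)$ stays at a fixed interior point of $\Dc$, say $(0,\dots,0,-1)$.

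Finally I would analyze the composed maps $\Psi_n := \Lambda_n \circ \varphi_n : \Omega \to \Omega_n$. Because each $\varphi_n$ is a Kobayashi isometry and the Kobayashi metrics of the $\Omega_n$ converge locally uniformly to the Kobayashi metric of $\Dc$ (a consequence of the convergence of the domains and local hyperbolicity), the family $\{\Psi_n\}$ is equicontinuous on compact subsets of $\Omega$ and has a subsequential limit $\Psi : \Omega \to \overline{\Dc}$. The hard part, and the main obstacle, is upgrading $\Psi$ to a biholomorphism onto $\Dc$: one must show (i) $\Psi(\Omega) \subseteq \Dc$ rather than collapsing into $\partial \Dc$, using lower bounds on the Kobayashi metric near the boundary of the strongly pseudoconvex domain $\Omega$ to keep $\Psi_n(p)$ a uniformly bounded Kobayashi distance from any interior compactum; (ii) $\Psi$ is non-degenerate, using an infinitesimal version of the same estimate; and (iii) $\Psi$ is surjective, which I would obtain by running the same scaling argument on the inverses $\varphi_n^{-1}$ to extract a limiting map $\Dc \to \Omega$ inverse to $\Psi$. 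Combining these with the biholomorphism $\Dc \cong \Bb^d$ gives the theorem.
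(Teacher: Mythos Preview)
The paper does not prove this theorem; it is quoted in the introduction with citations to Rosay~\cite{R1979} and Wong~\cite{W1977} and used only as background, so there is no in-paper argument to compare your proposal against.

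For what it is worth, your outline is the Pinchuk scaling approach, which is a standard modern route to Wong--Rosay (though not the original arguments of Wong or Rosay themselves). The skeleton is sound: boundary accumulation from properness plus non-compactness, local normal form at a strongly pseudoconvex point, anisotropic dilations converging to the Siegel half-space, and a normal-families extraction. The points you flag as obstacles---non-collapse into $\partial\Dc$, non-degeneracy, and surjectivity---are exactly the places where real work is needed; in the literature they are handled via uniform two-sided estimates on the Kobayashi metric near a strongly pseudoconvex boundary (e.g.\ Graham's estimates) together with a parallel normal-families argument for the inverses. One minor correction: the local change of coordinates putting $\rho$ into the form $\Real(z_d)+\sum_{j<d}\abs{z_j}^2+O(\abs{z}^3)$ is in general only a local biholomorphism defined near $q$, not a global one on $\Omega$, so the scaling argument has to be carried out on the portion of $\Omega$ near $q$ and then patched back; this is routine but should be said.
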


Other bounded domains with large automorphism group and smooth boundary can be constructed using weighted homogeneous balanced polynomials. Given $m=(m_1, \dots, m_d) \in \Zb^d_{>0}$ define an associated weight function $\Wt_m: \Zb_{\geq 0}^d \rightarrow \Qb$ by 
\begin{align*}
\Wt_m(\alpha) := \sum_{i=1}^d \frac{ \alpha_i}{2 m_i}.
\end{align*}
A real polynomial $p:\Cb^d \rightarrow \Rb$ is then called a \emph{weighted homogeneous balanced polynomial} if there exists some $m \in \Zb_{>0}^d$ so that 
\begin{align*}
p(z_1, \dots, z_d) = \sum_{\Wt_m(\alpha)=\Wt_m(\beta)=1/2} C_{\alpha, \beta} z^{\alpha} \overline{z}^{\beta}.
\end{align*}

\begin{definition} A domain $\Ec$ is called a \emph{polynomial ellipsoid} if 
\begin{align*}
\Ec = \left\{ (w,z) \in \Cb \times \Cb^d : \abs{w}^2 + p(z) < 1\right\}
\end{align*}
where $p:\Cb^d \rightarrow \Rb$ is a weighted homogeneous balanced polynomial.
\end{definition}

There are many examples of bounded polynomial ellipsoid, for instance 
\begin{align*}
\left\{( w,z) \in \Cb \times \Cb : \abs{w}^2 + \abs{z}^{2m} < 1\right\}
\end{align*}
for any integer $m > 1$. Moreover, a polynomial ellipsoid always has a non-compact automorphism group: when $p$ is a weighted homogeneous balanced polynomial the domain
\begin{align*}
\Cc: = \left\{ (w,z) \in \Cb \times \Cb^{d} : \Imaginary(w) > p(z)\right\}
\end{align*}
has non-compact automorphism group (namely real translations in the first variable and a dilation) and the map 
given by 
\begin{align*}
F(z_0,\dots, z_d) = \left( \frac{1-iz_0/4}{1+iz_0/4}, \frac{z_1}{(1+iz_0/4)^{1/m_1}}, \dots, \frac{z_d}{(1+iz_0/4)^{1/m_d}}\right)
\end{align*}
is a biholomorphism of $\Cc$ to 
\begin{align*}
\Ec = \left\{ (w,z) \in \Cb \times \Cb^d : \abs{w}^2 + p(z) < 1\right\}.
\end{align*}

In a series of papers Bedford and Pinchuk~\cite{BP1988, BP1991, BP1994, BP1998} studied the automorphism group of domains of finite type and in particular gave the following characterization of the domains described above:

\begin{theorem}\label{thm:BP}\cite{BP1994}
Suppose $\Omega$ is a bounded convex domain with $C^\infty$ boundary and finite type in the sense of D'Angelo. Then $\Aut(\Omega)$ is non-compact if and only if $\Omega$ is biholomorphic to a polynomial ellipsoid. 
\end{theorem}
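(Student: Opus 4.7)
My plan follows the Pinchuk scaling method, which extends the strategy used in the Wong--Rosay theorem to handle the finite type setting. The backward direction is essentially given in the text: the explicit biholomorphism $F$ identifies the polynomial ellipsoid $\Ec$ with the unbounded model $\Cc = \{\Imaginary(w) > p(z)\}$, and the latter obviously admits non-compact automorphisms (real translations $w \mapsto w+t$ together with an induced dilation).

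For the forward direction, assume $\Aut(\Omega)$ is non-compact. Properness of the action yields $\phi_n \in \Aut(\Omega)$ and $z_0 \in \Omega$ such that, after passing to a subsequence, $q_n := \phi_n(z_0)$ leaves every compact subset and converges to some $p \in \partial\Omega$. Since $\partial\Omega$ is smooth, convex, and of finite D'Angelo type at $p$, Catlin's multitype is well defined, producing integers $m_1 \leq \dots \leq m_{d-1}$ that record the order of contact of $\partial\Omega$ with complex lines in the complex tangent space at $p$. I would first choose holomorphic coordinates $(w, z_1, \ldots, z_{d-1})$ centered at $p$ so that the outward normal points along $\Real(w)$ and the local defining function takes the form $r(w, z) = -\Imaginary(w) + P(z) + \text{(lower weighted order)}$, where $P$ is weighted homogeneous of weighted degree $1/2$ with respect to the weights $(1/(2m_i))$; balancedness can be arranged by absorbing pure holomorphic terms into a local biholomorphism, and convexity of $\Omega$ forces $P$ to be convex.

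Next, I would define the anisotropic affine rescaling $T_n$ as a translation carrying $q_n$ to a fixed basepoint $b$ in the half-space model, composed with the dilation $(w, z_1, \ldots, z_{d-1}) \mapsto (\varepsilon_n^{-1} w, \varepsilon_n^{-1/(2m_1)} z_1, \ldots, \varepsilon_n^{-1/(2m_{d-1})} z_{d-1})$, where $\varepsilon_n = \text{dist}(q_n, \partial\Omega)$. The key claim is that the rescaled domains $T_n(\Omega)$ converge in the local Hausdorff topology to the half-space model $\wh{\Omega} := \{(w,z) : \Imaginary(w) > P(z)\}$, and simultaneously that $F_n := T_n \circ \phi_n$ converges, along a further subsequence, to a biholomorphism $F : \Omega \to \wh{\Omega}$. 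Convergence of the domains follows from the uniform control the finite type hypothesis gives on Taylor data; normality of $\{F_n\}$ follows from Kobayashi distance estimates (since $F_n(z_0) = b$ is fixed, the Kobayashi metric on $\Omega$ is complete, and $\wh{\Omega}$ is Kobayashi hyperbolic by convexity); and applying the same argument to $F_n^{-1}$ promotes the limit to a biholomorphism. Composing with the biholomorphism $F$ recorded in the text then identifies $\wh{\Omega}$ with the polynomial ellipsoid $\Ec$.

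The hardest step is verifying that the rescaling limit $\wh{\Omega}$ takes precisely the weighted homogeneous balanced form and that the limit map is non-degenerate. D'Angelo finiteness at $p$ is exactly what guarantees that the scaling exponents $1/(2m_i)$ are finite and that $P$ is a polynomial rather than a divergent power series. Convexity is what globalizes the local Taylor picture into a model capturing all of $\Omega$ under the scaling, and it forces $P$ to be convex with no degenerate complex tangential directions, so that $\wh{\Omega}$ is Kobayashi hyperbolic and the normal family argument produces a genuine biholomorphism rather than a map that collapses into a lower-dimensional slice.
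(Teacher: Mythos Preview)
The paper does not prove this theorem at all: Theorem~\ref{thm:BP} is quoted from Bedford--Pinchuk \cite{BP1994} and used as a black box in the final step of the proof of Theorem~\ref{thm:main}. There is therefore no ``paper's own proof'' to compare against.

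Your sketch is a reasonable high-level outline of the Bedford--Pinchuk scaling method, and the backward direction is indeed exactly what the paper records in the introduction. For the forward direction, a few cautions. First, the coordinate normalization you invoke via Catlin's multitype is more delicate than you indicate: in the convex setting one typically uses McNeal's adapted coordinate system rather than the general multitype machinery, and the fact that the limiting polynomial $P$ is \emph{balanced} (no pluriharmonic terms) requires a separate argument, not merely ``absorbing pure holomorphic terms into a local biholomorphism.'' Second, your normal-families step for $F_n^{-1}$ is where scaling arguments often break down: one needs to know that $F_n^{-1}(b)$ stays in a compact subset of $\Omega$, which is not automatic from what you have written. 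Third, the actual Bedford--Pinchuk proof in \cite{BP1994} does more than a single rescaling; it analyzes the structure of the parabolic one-parameter subgroup in $\Aut(\Omega)$ that the scaling produces and uses this group action, together with a $\overline{\partial}$-argument, to pin down the polynomial form. Your sketch compresses this into the claim that the limit domain is automatically $\{\Imaginary(w) > P(z)\}$ with $P$ weighted-homogeneous and balanced, but that is really the content of the theorem, not a consequence of the setup.
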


There are many other results characterizing special domains via the properties of their automorphism group and boundary, see for instance~\cite{GK1987, K1992, W1995, G1997, V2009} and the survey paper~\cite{IK1999}. Like the two theorems mentioned above, almost all previous work assumes that either the entire boundary or a point in the limit set satisfies some infinitesimal condition (for instance strong pseudoconvexity, finite type, or Levi flat). In contrast to these result we provide a new characterization of balanced domains in terms of the geometry of the limit set. 

We define the \emph{limit set} of $\Omega$, denoted $\Lc(\Omega)$,  to be the set of points $x \in \partial \Omega$ where there exists some $p \in \Omega$ and some sequence $\varphi_n \in \Aut(\Omega)$ such that $\varphi_n p \rightarrow x$. Since $\Aut(\Omega)$ acts properly on $\Omega$, when $\Aut(\Omega)$ is non-compact the limit set $\Lc(\Omega)$ is non-empty. 

 If $\Omega$ is a bounded convex domain with $C^1$ boundary and $x \in \partial \Omega$ let $T_x^{\Cb} \partial \Omega \subset \Cb^d$ be the complex hyperplane tangent to $\partial \Omega$ at $x$. Then the \emph{closed complex face} of a point $x \in \partial \Omega$ is the closed set  $\partial \Omega \cap T_x^{\Cb} \partial \Omega$.

With this language we will prove:

\begin{theorem}\label{thm:main}
Suppose $\Omega$ is a bounded convex domain with $C^\infty$ boundary. Then the following are equivalent:
\begin{enumerate}
\item $\Lc(\Omega)$ intersects at least two closed complex faces of $\partial \Omega$, 
\item $\Omega$ is biholomorphic to a polynomial ellipsoid. 
\end{enumerate}
\end{theorem}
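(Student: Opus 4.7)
The direction (2) $\Rightarrow$ (1) is the easier one. Let $\Ec$ be a polynomial ellipsoid with associated Siegel-type model $\Cc = \{(w, z) : \Imag(w) > p(z)\}$ and let $F : \Cc \to \Ec$ be the biholomorphism exhibited in the excerpt. The weighted dilations $(w,z) \mapsto (\lambda w, \lambda^{1/(2m_1)}z_1, \ldots, \lambda^{1/(2m_d)}z_d)$ form a one-parameter subgroup of $\Aut(\Cc)$ whose forward and backward orbits on $\partial\Cc$ converge to $0$ and $\infty$; under $F$ these correspond to $(1, 0)$ and $(-1, 0)$ in $\partial\Ec$. The complex tangent to $\partial\Ec$ at $(\pm 1, 0)$ is the hyperplane $\{w = \pm 1\}$, so the two closed complex faces $\partial\Ec \cap \{w = \pm 1\}$ are disjoint and each contains one of the two limit points.

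For (1) $\Rightarrow$ (2), the plan is to run a scaling argument whose novelty is how the two-face hypothesis is used. From the hypothesis, fix $x_+, x_- \in \Lc(\Omega)$ in distinct closed complex faces of $\partial\Omega$ together with a basepoint $p \in \Omega$ and sequences $\varphi_n^\pm \in \Aut(\Omega)$ with $\varphi_n^\pm(p) \to x_\pm$. Using the Kobayashi metric geometry of convex domains, which the paper treats as non-positively-curved in a precise sense, together with properness of the $\Aut(\Omega)$-action, one should be able to refine these to a single sequence $\psi_n \in \Aut(\Omega)$ with $\psi_n(p) \to x_+$ and $\psi_n^{-1}(p) \to x_-$. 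This is a genuine ``north--south'' dynamical statement, in which the two-complex-faces hypothesis plays the role of non-elementarity of the limit set in the theory of $\CAT(0)$ spaces.

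Next, translate $x_+$ to the origin and rotate so that $T_0^{\Cb}\partial\Omega = \{w = 0\}$; let $\delta_n \to 0$ be the Euclidean distance from $\psi_n(p)$ to $\partial\Omega$ and choose a weighted dilation $\Lambda_n$ calibrated to the Catlin-type multitype of $\partial\Omega$ at $x_+$, sending $\psi_n(p)$ to a fixed normalization point. After passing to a subsequence, the domains $\Omega_n := \Lambda_n(\Omega)$ converge in the local Hausdorff topology to a convex domain of the form $\hat\Omega = \{(w, z) : \Imag(w) > P(z)\}$ with $P$ a plurisubharmonic polynomial, and $\Lambda_n \circ \psi_n$ converges, by a Kobayashi normal families argument, to a biholomorphism $\Omega \to \hat\Omega$.

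The main obstacle is showing that $\hat\Omega$ is of finite type in the sense of D'Angelo, equivalently that $P$ contains no complex affine lines through the origin, after which Bedford--Pinchuk (Theorem~\ref{thm:BP}) applies to $\hat\Omega$, or to $\Omega$ directly via the biholomorphism, and identifies the domain as a polynomial ellipsoid. This is exactly where the second limit point $x_-$ is essential: after rescaling, the backward orbit $\psi_n^{-1}(p)$ survives in $\hat\Omega$ as an orbit accumulating at a boundary point belonging to a second complex face of $\hat\Omega$, and this prevents any direction along which $P$ is identically zero on a complex line. Without this input the scaling could degenerate to a product domain of the form $\{\Imag(w) > Q(z')\} \times \Cb^j$, which is not a polynomial ellipsoid; the two-faces assumption is precisely the geometric condition that rules this out.
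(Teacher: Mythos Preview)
Your outline for (1) $\Rightarrow$ (2) has a genuine gap at its central step. You propose to rescale at $x_+$ by a weighted dilation $\Lambda_n$ ``calibrated to the Catlin-type multitype of $\partial\Omega$ at $x_+$,'' obtaining a polynomial model $\wh\Omega=\{\Imaginary(w)>P(z)\}$. But the Catlin multitype is only defined when $x_+$ has finite type; if $x_+$ has infinite type there is no polynomial $P$ to converge to, and the rescaling limit need not be of this form at all. In the paper this is exactly the hard part: Sections~8--9 first prove that the existence of a hyperbolic element forces some limit point to have finite type (Theorem~\ref{thm:unif_tang_conv}), via the obstruction (Theorem~\ref{thm:prod_map}) that no holomorphic map $\Delta\times\Delta\to\Omega$ can be an almost-isometry in the two coordinate directions. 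Only after finite type is established does the rescaling of Proposition~\ref{prop:finite_type} produce a polynomial model.

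Your proposed substitute --- that the backward orbit $\psi_n^{-1}(p)$ ``survives in $\wh\Omega$'' after rescaling and rules out complex lines in $\{P=0\}$ --- does not work as stated. The points $\psi_n^{-1}(p)$ converge to $x_-$, which lies at a fixed positive distance from $x_+$; since $\Lambda_n$ is an expanding dilation centered at $x_+$, the images $\Lambda_n(\psi_n^{-1}(p))$ escape to infinity and carry no information about the local model near the origin. In the paper the second face is used differently: it is only needed to produce a hyperbolic element (Theorem~\ref{thm:exist_hyp}), which in turn forces an orbit to track a normal line non-tangentially (Theorem~\ref{thm:hyp_unif_conv}); the finite-type conclusion then comes from the product-map obstruction, not from any surviving second orbit in the rescaled domain.

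A smaller issue in (2) $\Rightarrow$ (1): your argument shows $\Lc(\Ec)$ meets two complex faces of $\partial\Ec$, but the claim is about an arbitrary $\Omega$ biholomorphic to $\Ec$. Biholomorphisms need not extend to the boundary or respect complex faces, so this does not transfer automatically. The paper instead pushes the disk $\Hc\times\{0\}\subset\Cc$ forward to a proper holomorphic map $\varphi:\Delta\to\Omega$ whose image lies in a single $\Aut(\Omega)$-orbit, and derives a contradiction directly in $\Omega$ (via the Cauchy integral formula) from the assumption that $\Lc(\Omega)$ lies in one complex face.
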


\begin{remark} Suppose $\Omega$ is a bounded convex domain with $C^1$ boundary and $x,y \in \partial \Omega$ are distinct. Then the following are equivalent: 
\begin{enumerate}
\item $x$ and $y$ are in different closed complex faces of $\partial \Omega$, 
\item $T_x^{\Cb} \partial \Omega \neq T_y^{\Cb} \partial \Omega$,
\item the complex line containing $x$ and $y$ intersects $\Omega$.
\end{enumerate}
\end{remark} 

\subsection*{Acknowledgments} 

I would like to thank Anders Karlsson for bringing to my attention his work in~\cite{K2005b}, in particular the general theory of isometries of metric spaces described there motivated some of the results below. I would also like to thank Eric Bedford, Gautam Bharali, and a referee for helpful comments which improved the exposition of this paper. This material is based upon work supported by the National Science Foundation under Grant Number NSF 1400919.

\section{Outline of the proof of Theorem~\ref{thm:main}}\label{sec:outline}

One of the key ideas in the proof of Theorem~\ref{thm:main} is to show that a smoothly bounded convex domain endowed with its the Kobayashi metric behaves like a Gromov hyperbolic metric space. In this section we will recall some properties of Gromov hyperbolic metric spaces, describe analogues of these properties for the Kobayashi metric on a smoothly bounded convex domain, and then describe the main steps in the proof Theorem~\ref{thm:main}. We then end this section with some other applications of these negative curvature type properties.
 
 \subsection{Gromov hyperbolic metric spaces} Suppose $(X,d)$ is a metric space. If $I \subset \Rb$ is an interval, a curve $\sigma: I \rightarrow X$ is a \emph{geodesic} if $d(\sigma(t_1),\sigma(t_2)) = \abs{t_1-t_2}$ for all $t_1, t_2 \in I$.  A \emph{geodesic triangle} in a metric space is a choice of three points in $X$ and geodesic segments  connecting these points. A geodesic triangle is said to be \emph{$\delta$-thin} if any point on any of the sides of the triangle is within distance $\delta$ of the other two sides. 

\begin{definition}
A proper geodesic metric space $(X,d)$ is called \emph{$\delta$-hyperbolic} if every geodesic triangle is $\delta$-thin. If $(X,d)$ is $\delta$-hyperbolic for some $\delta\geq0$, then $(X,d)$ is called \emph{Gromov hyperbolic}.
\end{definition}

\begin{remark} Bridson and Haefliger's~\cite{BH1999} book on non-positively curved metric spaces is one of the standard references for Gromov hyperbolic metric spaces. \end{remark}

For a metric space $(X,d)$ the \emph{Gromov product} of $p,q \in X$ at $o \in X$ is defined to be:
\begin{align*}
(p|q)_o = \frac{1}{2} ( d(p,o) + d(o,q) - d(p,q) ).
\end{align*}
If $(X,d)$ is Gromov hyperbolic, then one can use the Gromov product to define an abstract boundary $X(\infty)$ called the \emph{ideal boundary}. A sequence $(p_i)_{i\geq1} \subset X$ is said to be \emph{converge to infinity} if 
\begin{align*}
\lim_{i,j \rightarrow \infty} (p_i|p_j)_o = \infty
\end{align*}
 for some (and hence any) $o \in X$. The set $X(\infty)$ is then the equivalence classes of sequences converging to infinity where two such sequences $(p_i)_{i \geq 1}$ and $(q_j)_{j \geq 1}$ are equivalent if 
\begin{align*}
\lim_{i,j \rightarrow \infty} (p_i|q_j)_o = \infty
\end{align*}
for some (and hence any) $o \in X$. Finally, there is a natural topology on $X \cup X(\infty)$ which makes it a compactification of $X$ (see for instance Chapter III.H Section 3 in~\cite{BH1999}). It is important to note that when $(X,d)$ is not Gromov hyperbolic, the relation defined above may not be transitive. 

This compactification behaves nicely with respect to 1-Lipschitz maps $f:X \rightarrow X$. In particular, Karlsson proved the following Wolff-Denjoy theorem:

\begin{theorem}\label{thm:gromov_wd}\cite[Proposition 5.1]{K2001} Suppose $(X,d)$ is Gromov hyperbolic. If $f: X \rightarrow X$ is 1-Lipschitz, then either:
\begin{enumerate}
\item for every $p \in X$ the orbit $\{ f^n(p) : n \in \Nb\}$ is bounded in $(X,d)$, 
\item there exists some $x \in X(\infty)$ so that 
\begin{align*}
\lim_{n \rightarrow \infty} f^n(p) = x
\end{align*}
for all $p \in X$. 
\end{enumerate}
\end{theorem}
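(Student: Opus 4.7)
The plan is to split by whether a single orbit of $f$ is bounded. If $\{f^n(o)\}$ is bounded for some $o \in X$, then the 1-Lipschitz bound $d(f^n(p), f^n(o)) \leq d(p,o)$ makes every orbit bounded, giving case (1). The same bound shows that different orbits stay within a fixed distance of one another, so once $f^n(o) \to \xi \in X(\infty)$ is proved, the Gromov-product characterization of convergence immediately yields $f^n(p) \to \xi$ for every $p$. So I fix $o$, assume $\{f^n(o)\}$ is unbounded, and produce the limit point.

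Set $a_n := d(f^n(o), o)$. The 1-Lipschitz property and the triangle inequality give subadditivity $a_{n+m} \leq a_n + a_m$, so Fekete's lemma yields an escape rate $A := \lim_{n \to \infty} a_n / n = \inf_n a_n / n$. Applying $f^m$ to the pair $(f^{n-m}(o), o)$ gives $d(f^n(o), f^m(o)) \leq a_{n-m}$ for $n \geq m$, so the Gromov product satisfies
\begin{align*}
(f^n(o) \,|\, f^m(o))_o \,\geq\, \tfrac{1}{2}\bigl( a_n + a_m - a_{n-m} \bigr).
\end{align*}
The key device is an ``extremal time'' observation: for any $\epsilon > 0$ the sequence $b_n := a_n - (A-\epsilon)n$ tends to $+\infty$, so its running maximum is attained at infinitely many indices $n_1 < n_2 < \cdots$; for each such extremal $n_k$ and any $m \leq n_k$, the inequality $b_{n_k} \geq b_{n_k-m}$ yields $a_{n_k} - a_{n_k - m} \geq (A-\epsilon) m$, hence
$(f^{n_k}(o) \,|\, f^m(o))_o \geq \tfrac{1}{2}\bigl(a_m + (A-\epsilon) m\bigr)$.

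In the case $A > 0$, pick $\epsilon < A$. Feeding $m = n_l$ another extremal time shows $(f^{n_k}(o) \,|\, f^{n_l}(o))_o \to \infty$, so the extremal subsequence converges to infinity in the sense of the excerpt and picks out a point $\xi \in X(\infty)$. For any index $n$, choosing an extremal $n_k \geq n$ gives $(f^{n_k}(o) \,|\, f^n(o))_o \geq \tfrac{1}{2}(a_n + (A-\epsilon) n) \to \infty$ as $n \to \infty$. A routine application of the $\delta$-thin triangle inequality for Gromov products $\bigl((p|q)_o \geq \min\{(p|r)_o,(r|q)_o\} - \delta\bigr)$ then upgrades this to $\lim_{n, k \to \infty}(f^n(o) \,|\, f^{n_k}(o))_o = \infty$, i.e.\ the whole orbit lies in the equivalence class of $\xi$.

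The main obstacle is the sublinear regime $A = 0$, in which the extremal-time bound degenerates. The standard fix is to pass to horofunctions: the equicontinuous family $h_z(x) := d(x,z) - d(o,z)$ admits subsequential limits along $f^{n_k}(o) \to \infty$, and any such limit $h$ is a horofunction on $X$. In a proper Gromov hyperbolic space every horofunction is (up to bounded error) a Busemann function based at a single point of $X(\infty)$, and the 1-Lipschitz property of $f$ together with the subadditivity of $a_n$ gives a near-contractivity of $h$ along the orbit that traps the full sequence in that one Busemann class. Setting up this horofunction machinery cleanly — in particular, deriving the required $f$-invariance from the case $A = 0$ — is the technical heart of the unbounded case.
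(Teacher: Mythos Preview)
The paper does not prove this theorem; it is quoted from Karlsson~\cite{K2001}. However, the paper does prove the closely analogous Theorem~\ref{thm:wolf} by adapting Karlsson's argument, so one can compare against that.

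Your bounded-orbit case is fine. In the unbounded case, your introduction of the escape rate $A = \lim a_n/n$ and the split into $A>0$ versus $A=0$ is an unnecessary detour that leaves you with an unfinished argument. The cleaner route, which is exactly what the paper does for Theorem~\ref{thm:wolf}, goes as follows. First invoke Ca{\l}ka's theorem~\cite[Theorem 5.6]{C1984b} (valid in any proper metric space) to upgrade ``unbounded orbit'' to $a_n := d(f^n(o),o) \to \infty$. Then choose the extremal times $n_i$ simply as indices where $a_{n_i} > a_m$ for every $m < n_i$; no linear correction is needed. For any $m < n_i$ and any $p$, the $1$-Lipschitz bound $d(f^{n_i}(o), f^m(p)) \leq d(f^{n_i - m}(o), p) \leq a_{n_i - m} + d(o,p)$ together with $a_{n_i} \geq a_{n_i - m}$ yields
\begin{align*}
2(f^{n_i}(o) | f^m(p))_o \geq a_{n_i} + d(f^m(p),o) - a_{n_i-m} - d(o,p) \geq d(f^m(p),o) - d(o,p),
\end{align*}
which tends to infinity with $m$ since $a_m \to \infty$. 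This already shows that every subsequential limit of $(f^m(p))$ lies in the equivalence class of any limit point of $(f^{n_i}(o))$, and the $\delta$-hyperbolicity inequality for Gromov products finishes exactly as you indicated. No horofunction machinery is required, and there is no separate $A=0$ case.

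Your $A=0$ paragraph is, as written, only a sketch: you assert that a limiting horofunction is near-contractive along the orbit and that this traps the orbit in one Busemann class, but you do not carry out the calculation. The real missing ingredient is Ca{\l}ka's theorem, which you never invoked: ``unbounded'' does not by itself give $a_n \to \infty$, and without that your running-maximum times need not go to infinity. Once Ca{\l}ka is in hand, the extremal-time bound above already covers $A=0$, so the horofunction detour is best avoided.
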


For Gromov hyperbolic metric spaces, it is possible to characterize the isometries in terms of their long term behavior: 

\begin{definition}
Suppose $(X,d)$ is Gromov hyperbolic and $\varphi: X \rightarrow X$ is an isometry. Then:
\begin{enumerate}
\item $\varphi$ is \emph{elliptic} if the orbit $\{ \varphi^n(p) : n \in \Zb\}$ is bounded for some (hence any) $p \in X$, 
\item $\varphi$ is \emph{hyperbolic} if $\phi$ is not elliptic and 
\begin{align*}
\lim_{n \rightarrow \infty} \varphi^n(p) \neq \lim_{n \rightarrow -\infty} \varphi^n(p) 
\end{align*}
for some (hence any) $p \in X$, 
\item $\phi$ is \emph{parabolic} if $\varphi$ is not elliptic and 
\begin{align*}
\lim_{n \rightarrow \infty} \varphi^n(p) = \lim_{n \rightarrow -\infty} \varphi^n(p) 
\end{align*}
for some (hence any) $p \in X$.
\end{enumerate}
\end{definition}

\begin{remark} Notice that Theorem~\ref{thm:gromov_wd} implies that every isometry of $(X,d)$ is either elliptic, hyperbolic, or parabolic. \end{remark}

One more important property of Gromov hyperbolic metric spaces is that geodesics joining two distinct points in the ideal boundary ``bend'' into the space:
\begin{theorem}\label{thm:gromov_visible}
Suppose $(X,d)$ is Gromov hyperbolic. If $x,y \in X(\infty)$ and $V_x, V_y$ are neighborhoods of $x,y$ in $X \cup X(\infty)$ so that $\overline{V_x} \cap \overline{V_y} = \emptyset$, then there exists a compact set $K \subset X$ with the following property: if $\sigma: [0,T] \rightarrow X$ is a geodesic with $\sigma(0) \in V_x$ and $\sigma(T) \in V_y$,  then $\sigma \cap K \neq \emptyset$. 
\end{theorem}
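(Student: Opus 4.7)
The plan is to exploit the classical comparison, in a $\delta$-hyperbolic space, between the distance from a basepoint to a geodesic and the Gromov product of the geodesic's endpoints. Once this comparison is in hand, the theorem reduces to bounding the Gromov product uniformly over points in the two disjoint neighborhoods.

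Fix a basepoint $o \in X$. The first step is to show there exists $M > 0$ such that $(p|q)_o \leq M$ for every $p \in V_x \cap X$ and $q \in V_y \cap X$. I would argue by contradiction: if no such bound existed, one could extract sequences $p_n \in V_x$ and $q_n \in V_y$ with $(p_n|q_n)_o \to \infty$. The triangle inequality forces both sequences to leave every compact subset of $X$ (if, say, $p_n$ stayed within distance $R$ of $o$, then $(p_n|q_n)_o \leq R$). By properness of $X$ and compactness of $X \cup X(\infty)$ we may pass to subsequences converging to points $x' \in \overline{V_x} \cap X(\infty)$ and $y' \in \overline{V_y} \cap X(\infty)$. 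Since $(p_n|q_n)_o \to \infty$, the equivalence relation defining $X(\infty)$ gives $x' = y'$, contradicting $\overline{V_x} \cap \overline{V_y} = \emptyset$.

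The second step is the standard $\delta$-hyperbolic estimate
\begin{align*}
d\bigl(o, [p,q]\bigr) \leq (p|q)_o + C\delta
\end{align*}
valid for every $p, q \in X$, every geodesic segment $[p,q]$, and some absolute constant $C$ (one may take $C = 4$). This is proved by applying the $\delta$-thin triangle condition to the geodesic triangle with vertices $o, p, q$, as in Chapter III.H of Bridson--Haefliger. Combining the two steps, every geodesic $\sigma : [0,T] \to X$ with $\sigma(0) \in V_x$ and $\sigma(T) \in V_y$ meets the compact set $K := \overline{B}(o, M + C\delta)$, which is compact by properness of $(X,d)$.

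The main technical point is the first step, where the disjoint-closures hypothesis must be converted into a uniform Gromov-product bound; transitivity of the relation defining $X(\infty)$, which holds precisely because $X$ is Gromov hyperbolic, is essential here. Everything else is a direct application of standard tools from $\delta$-hyperbolicity.
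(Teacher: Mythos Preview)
The paper does not actually prove Theorem~\ref{thm:gromov_visible}. It is stated in Section~\ref{sec:outline} as a known background property of Gromov hyperbolic metric spaces, with the accompanying remark pointing to Eberlein--O'Neill~\cite{EO1973}, \cite{BGS1985}, and \cite{BH1999} for the origin and standard treatment of such visibility conditions. The paper's own work is the analogue for the Kobayashi metric on convex domains (Theorem~\ref{thm:geod_converge}), which is proved by quite different means.

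That said, your argument is correct and is essentially the standard proof one finds in the references above. Two small points worth tightening: first, when you pass to subsequential limits $x' \in \overline{V_x} \cap X(\infty)$ and $y' \in \overline{V_y} \cap X(\infty)$ and then invoke $(p_n | q_n)_o \to \infty$ to conclude $x' = y'$, you are implicitly using the $\delta$-hyperbolic four-point inequality to upgrade the diagonal condition $\lim_n (p_n|q_n)_o = \infty$ to the off-diagonal condition $\lim_{n,m}(p_n|q_m)_o = \infty$ required by the definition of the equivalence relation on $X(\infty)$. You acknowledge this at the end (``transitivity \ldots\ holds precisely because $X$ is Gromov hyperbolic''), but it would be cleaner to make the inequality explicit. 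Second, the estimate $d(o,[p,q]) \leq (p|q)_o + C\delta$ is indeed standard, but the usual references (e.g.\ \cite[Chapter III.H, Lemma 3.17]{BH1999}) give a slightly different form; you may want to state precisely which version you are citing.
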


\begin{remark} Conditions of this type were first introduced by Eberlein and O'Neill~\cite{EO1973} in the context of non-positively curved simply connected Riemannian manifolds. Also see~\cite[page 54]{BGS1985} or~\cite[page 294]{BH1999}.
\end{remark}

\subsection{The Kobayashi metric on smoothly bounded convex domains}\label{subsec:neg_curv_kob} 

We now describe how the properties described above extend to the Kobayashi metric on a smoothly bounded convex domain. 

Given a domain $\Omega \subset \Cb^d$, let $K_\Omega$ be the Kobayashi distance on $\Omega$. We recently proved the following:

\begin{theorem}\cite{Z2014} Suppose $\Omega \subset \Cb^d$ is a bounded convex domain with $C^\infty$ boundary. Then the following are equivalent: 
\begin{enumerate}
\item $\Omega$ has finite type in the sense of D'Angelo, 
\item $(\Omega, K_\Omega)$ is Gromov hyperbolic. 
\end{enumerate}
\end{theorem}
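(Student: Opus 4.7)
The two directions have quite different flavors, so I sketch each separately.

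\textbf{Direction (1) $\Rightarrow$ (2).} Assuming $\Omega$ has finite D'Angelo type, the plan is to verify Gromov hyperbolicity by combining sharp Kobayashi metric estimates with a visibility-plus-stability package. The first step is to invoke Catlin--McNeal type estimates giving two-sided bounds on the Kobayashi infinitesimal metric in terms of a boundary-adapted polyradius encoding the multitype. From these estimates I would deduce: (a) every sequence $z_n \to x \in \partial \Omega$ converges in a natural visual compactification of $(\Omega, K_\Omega)$, and (b) a visibility property in the spirit of Theorem~\ref{thm:gromov_visible}: any sequence of $(1,\kappa)$-almost-geodesics whose endpoints approach distinct points $x,y \in \partial \Omega$ passes through a compact subset of $\Omega$ that depends only on fixed neighborhoods of $x$ and $y$. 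A Morse-type stability result for quasi-geodesics, obtained from the same metric estimates, would then upgrade (a) and (b) to a uniform thin-triangle constant and hence Gromov hyperbolicity. The most efficient route to (b) seems to be a rescaling argument: use Bedford--Pinchuk scaling along the sequences of almost-geodesics to extract a limit polynomial-ellipsoid model, verify visibility there directly (since such models are biholomorphic to explicit weighted homogeneous objects), and transfer the property back to $\Omega$.

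\textbf{Direction (2) $\Rightarrow$ (1).} I would argue by contrapositive. Assume $\Omega$ has infinite D'Angelo type at some $p \in \partial \Omega$. By McNeal's theorem (D'Angelo type equals line type on smoothly bounded convex domains), there is a complex line $L$ through $p$ with infinite order of contact with $\partial \Omega$ at $p$. The intuition is that such a line produces directions in which the Kobayashi metric degenerates, creating flatness incompatible with Gromov hyperbolicity. Concretely, I would pick sequences $z_n \to p$ along $L \cap \Omega$ and $w_n \to p$ along a nearby line with similarly high order of contact. The standard convex Kobayashi comparison bounds $K_\Omega(z_n,w_n)$ from above by the Kobayashi distance in a one-dimensional slice that remains uniformly bounded, while $K_\Omega(o, z_n)$ and $K_\Omega(o, w_n)$ tend to infinity for any fixed $o \in \Omega$. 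Hence the Gromov product $(z_n|w_n)_o$ tends to infinity, and any geodesic segment joining $z_n$ to $w_n$ escapes every compact subset of $\Omega$, violating the visibility statement of Theorem~\ref{thm:gromov_visible} and therefore Gromov hyperbolicity.

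\textbf{Main obstacle.} The hard part is the uniformity of visibility (b) across $\partial \Omega$ in direction (1) $\Rightarrow$ (2): the D'Angelo type, and hence the local behavior of $K_\Omega$, varies from point to point, so the compact set in Theorem~\ref{thm:gromov_visible} must be controlled uniformly. The rescaling argument handles this in principle, but demands careful verification that the rescaled domains converge (with varying basepoints) to a non-degenerate polynomial-ellipsoid limit and that Kobayashi almost-geodesics behave continuously under the scaling. I expect this step to involve the most delicate interplay between Lempert theory for convex domains, convex peak plurisubharmonic functions, and uniform extensions of complex geodesics to the boundary.
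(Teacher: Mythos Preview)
First, a framing point: this theorem is not proved in the present paper at all. It is quoted as a result from \cite{Z2014}, and the paper supplies no argument here. So there is no ``paper's own proof'' against which to compare your proposal; you are sketching the content of the cited reference, not of this article.

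That said, your sketch for $(2)\Rightarrow(1)$ contains a genuine gap. You take sequences $z_n\to p$ and $w_n\to p$ along lines of high contact and observe that $(z_n\,|\,w_n)_o\to\infty$ while the geodesic segments $[z_n,w_n]$ escape every compact set. But both sequences converge to the \emph{same} boundary point $p$, and this behavior is exactly what Gromov hyperbolicity predicts: in any Gromov hyperbolic space, if $z_n$ and $w_n$ converge to the same point of the Gromov boundary then their Gromov product tends to infinity and the connecting geodesics leave every compactum. So no contradiction with hyperbolicity or with the visibility statement of Theorem~\ref{thm:gromov_visible} (which concerns \emph{distinct} ideal boundary points) is produced. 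To make this direction work you must manufacture genuine two-dimensional flatness: either build arbitrarily fat geodesic triangles, or---closer to what \cite{Z2014} actually does, and to what appears in this paper as Proposition~\ref{prop:infinite_type_poly_disk}---rescale at the infinite-type point to produce a limit domain containing $\Hc\times\Delta$ and extract from that a quasi-isometric (or almost-geodesic) copy of a Euclidean half-plane inside $(\Omega,K_\Omega)$.

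Your sketch for $(1)\Rightarrow(2)$ is broadly in the right spirit (metric estimates, visibility, rescaling to polynomial models), and is not far from the strategy of \cite{Z2014}. The obstacle you flag---uniformity as the basepoint varies over $\partial\Omega$---is indeed the crux, and is handled there by a compactness argument in the space of pointed convex domains together with the fact that the limit models are themselves Gromov hyperbolic.
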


\begin{remark} Balogh and Bonk~\cite{BB2000} proved that the Kobayashi metric on a strongly pseudoconvex domain is Gromov hyperbolic. \end{remark}

For convex domains of finite type, the ideal and topological boundary also coincide:

\begin{proposition}\cite[Proposition 11.3, Proposition 11.5]{Z2014} Suppose $\Omega \subset \Cb^d$ is a bounded convex domain with finite type in the sense of D'Angelo. If $p_n, q_m \in \Omega$ are sequences such that $p_n \rightarrow x \in \partial \Omega$ and $q_m \rightarrow y \in \partial \Omega$, then 
\begin{align*}
\lim_{n, m \rightarrow \infty} (p_n|q_m)_o= \infty \text{ if and only if } x=y.
\end{align*}
In particular, the ideal boundary of $(\Omega, K_\Omega)$ is homeomorphic to the topological boundary of $\Omega$.
\end{proposition}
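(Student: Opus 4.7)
The proposition has two parts: the Gromov product dichotomy and the homeomorphism $\partial \Omega \cong \Omega(\infty)$. The homeomorphism follows formally from the dichotomy, since the map sending $x \in \partial \Omega$ to the equivalence class of any sequence in $\Omega$ converging to $x$ becomes a well-defined continuous bijection between compact Hausdorff spaces. I would establish the dichotomy by combining two Kobayashi boundary estimates for bounded convex domains of finite type: a universal lower bound $K_\Omega(o, p) \geq c\log(1/\delta_\Omega(p)) - C$ valid on any bounded domain, which forces $K_\Omega(o, p) \to \infty$ as $p \to \partial \Omega$ (where $\delta_\Omega$ denotes Euclidean boundary distance); and the existence, for each $x \in \partial \Omega$, of a $(\lambda, C)$-quasi-geodesic ray $\gamma_x : [0,\infty) \to \Omega$ with $\gamma_x(t) \to x$ that \emph{attracts} every boundary-close point, in the sense that any $p$ sufficiently near $x$ lies within bounded Kobayashi distance of $\gamma_x(t_p)$ for some $t_p \to \infty$ as $\delta_\Omega(p) \to 0$. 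These attracting rays can be built either from Lempert complex geodesics or by convex rescaling at $x$ to a polynomial ellipsoid model $\wh{\Omega}_x$ whose large automorphism group furnishes the ray.

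For the forward direction $x = y$, the common ray $\gamma_x$ tracks both sequences: pick $\gamma_x(t_{p_n})$ near $p_n$ and $\gamma_x(s_{q_m})$ near $q_m$ with $t_{p_n}, s_{q_m} \to \infty$. The triangle inequality along $\gamma_x$ then gives
\begin{align*}
K_\Omega(p_n, q_m) \leq K_\Omega(p_n, o) + K_\Omega(o, q_m) - 2 \min\{t_{p_n}, s_{q_m}\} + C',
\end{align*}
forcing $(p_n | q_m)_o \to \infty$. The subtle point is that $p_n$ and $q_m$ may approach $x$ from very different tangential directions; this is handled by convex rescaling at $x$, which reduces the question to the polynomial ellipsoid model $\wh{\Omega}_x$, whose translations and anisotropic dilations render all such approaches Kobayashi-equivalent.

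For the reverse direction, assume $x \neq y$ and consider the rays $\gamma_x, \gamma_y$. I first claim they define inequivalent ideal boundary points in $\Omega(\infty)$: if they were equivalent, Gromov hyperbolicity would put them at bounded Hausdorff $K_\Omega$-distance, and a standard localization of the Kobayashi metric at the smooth boundary would then force their Euclidean limits to coincide, contradicting $x \neq y$. Applying the visibility property (Theorem~\ref{thm:gromov_visible}) in the already-established Gromov hyperbolic space $(\Omega, K_\Omega)$ now yields a compact set in $\Omega$ through which every quasi-geodesic from one ideal endpoint to the other must pass. Applied to the approximations $p_n \approx \gamma_x(t_{p_n})$ and $q_m \approx \gamma_y(s_{q_m})$, this produces a uniform upper bound on $(p_n | q_m)_o$. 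The main obstacle in the whole argument is the construction of the attracting quasi-geodesic rays: neither half of the dichotomy gets off the ground without them, and the construction is the technical heart of the proof, leveraging McNeal-style boundary estimates for the Kobayashi metric together with the convex rescaling methods developed in~\cite{Z2014}.
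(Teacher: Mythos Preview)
The proposition is cited from~\cite{Z2014} and not proved in this paper; the natural point of comparison is the paper's proof of the more general Theorem~\ref{thm:gromov_prod}, which handles arbitrary $C^{1,\alpha}$ convex domains.

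Your plan rests on the claim that at each $x\in\partial\Omega$ there is a \emph{single} quasi-geodesic ray $\gamma_x$ landing at $x$ such that every $p\in\Omega$ sufficiently close to $x$ lies within uniformly bounded Kobayashi distance of $\gamma_x$. This is false already for the unit disk. Take $x=1$, let $\gamma_x$ be the diameter $(-1,1)$, and set $p_n=(1-\tfrac{1}{n})\,e^{i/\sqrt{n}}$; then $p_n\to 1$, but passing to the upper half-plane via $z\mapsto i(1+z)/(1-z)$ sends $p_n$ to a point with bounded imaginary part and real part of order $\sqrt{n}$, so $K_\Delta(p_n,\gamma_x)\to\infty$. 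The polynomial-ellipsoid rescaling you invoke does not rescue the claim: the dilations and translations of the model move the ray together with the point, so they do not bound $K_\Omega(p,\gamma_x)$. Since both halves of your argument use this attracting property---tracking $p_n,q_m$ along the common $\gamma_x$ when $x=y$, and placing $p_n$ in a Gromov neighborhood of $[\gamma_x]$ when $x\ne y$---both halves have a genuine gap.

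The device that replaces it, in the paper's proof of Theorem~\ref{thm:gromov_prod}(1), is a \emph{moving family} of normal rays rather than a single one. For each $p_n$ one picks $x_n\in\partial\Omega$ with $d_{\Euc}(p_n,x_n)=\delta_\Omega(p_n)$, so that $p_n=\sigma_{x_n}(s_n)$ lies \emph{exactly} on the almost-geodesic $\sigma_{x_n}$ of Proposition~\ref{prop:rough_geodesics}; one never compares $p_n$ to $\sigma_x$. The point is only that $x_n\to x$, so for each fixed $T$ the waypoints $\sigma_{x_n}(T)$ converge inside $\Omega$, and routing through $\sigma_{x_n}(T),\sigma_{y_m}(T)$ yields $(p_n|q_m)_o\ge T-C$. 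For the converse direction the paper avoids Gromov hyperbolicity and visibility altogether: Lemma~\ref{lem:dist} gives the direct lower bound $K_\Omega(p,q)\ge\tfrac12\log\delta_\Omega(p)^{-1}+\tfrac12\log\delta_\Omega(q)^{-1}-C$ whenever $p$ and $q$ are near two distinct complex faces, and this immediately caps the Gromov product.
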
 

One important step in the proof of Theorem~\ref{thm:main} is to show that the Gromov product is still reasonably behaved even when the domain does not have finite type:

\begin{theorem}\label{thm:gromov_prod_i}(see Theorem~\ref{thm:gromov_prod} below)
Suppose $\Omega \subset \Cb^d$ is a bounded convex domain with $C^{1,\alpha}$ boundary and $p_n, q_m \in \Omega$ are sequences such that $p_n \rightarrow x \in \partial \Omega$ and $q_m \rightarrow y \in \partial \Omega$. 
\begin{enumerate}
\item If $x=y$, then 
\begin{align*}
\lim_{n,m \rightarrow \infty} ( p_n | q_m)_o = \infty.
\end{align*}
\item If 
\begin{align*}
\limsup_{n,m \rightarrow \infty} \ ( p_n | q_m)_o = \infty,
\end{align*}
then $T_{x}^{\Cb} \partial \Omega = T_{y}^{\Cb} \partial \Omega$.
\end{enumerate}
\end{theorem}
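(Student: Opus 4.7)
For Part (1), I would combine two standard estimates for the Kobayashi distance on a bounded convex domain with $C^{1,\alpha}$ boundary. Writing $\delta_\Omega(p) := \text{dist}(p, \partial \Omega)$, Kobayashi contraction against the $\Cb$-linear functional whose zero set is the supporting complex tangent hyperplane at the nearest boundary point to $p$ gives the lower bound
\[
K_\Omega(p,o) \geq \tfrac12 \log \tfrac{1}{\delta_\Omega(p)} - C_o.
\]
Integrating the infinitesimal Kobayashi pseudometric along the Euclidean segment $[p,q] \subset \Omega$ (available by convexity) and exploiting the concavity of $\delta_\Omega$ along this segment gives the upper bound
\[
K_\Omega(p,q) \leq \tfrac12 \log\!\Big(1 + \tfrac{\|p-q\|}{\delta_\Omega(p)}\Big) + \tfrac12 \log\!\Big(1 + \tfrac{\|p-q\|}{\delta_\Omega(q)}\Big) + C.
\]
Inserting both inequalities into the definition of the Gromov product, the $-\tfrac12 \log \delta_\Omega$ contributions cancel exactly, yielding
\[
2(p_n|q_m)_o \geq -\log\bigl(\delta_\Omega(p_n) + \|p_n - q_m\|\bigr) - \log\bigl(\delta_\Omega(q_m) + \|p_n - q_m\|\bigr) - C'.
\]
When $p_n, q_m \to x \in \partial \Omega$ every quantity inside the logarithms tends to zero, so the Gromov product diverges to $+\infty$.

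For Part (2) I argue by contrapositive: assume $T_x^\Cb \partial \Omega \neq T_y^\Cb \partial \Omega$ and show $(p_n | q_m)_o$ is uniformly bounded. By the remark following the statement, the complex line $L$ through $x$ and $y$ meets $\Omega$; fix an interior point $p^* \in L \cap \Omega$. Since $K_\Omega(o, p^*) = O(1)$, it suffices to establish a reverse triangle inequality
\[
K_\Omega(p_n, q_m) \geq K_\Omega(p_n, p^*) + K_\Omega(p^*, q_m) - C.
\]
For the lower bound I would use the $\Cb$-linear functionals $g_x, g_y : \Omega \to \{\Real z > 0\}$ vanishing on the complex tangent hyperplanes at $x$ and $y$, respectively, normalized to map $\Omega$ into the right half-plane. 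The hypothesis $T_x^\Cb \neq T_y^\Cb$ forces $g_x(y) \neq 0$ and $g_y(x) \neq 0$, so the images $g_x(p_n), g_x(q_m)$ (and analogously $g_y(p_n), g_y(q_m)$) converge to distinct points of the closed right half-plane, where sequences converging to distinct boundary points have uniformly bounded Gromov product.

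The main obstacle is that these Kobayashi-contraction lower bounds do not by themselves give the reverse triangle inequality in $\Omega$: one also needs matching \emph{upper} bounds of the form $K_\Omega(p_n, p^*) \leq K_{\{\Real > 0\}}(g_x(p_n), g_x(p^*)) + O(1)$, which in general fail when $p_n$ approaches $x$ tangentially. To close this gap I would exploit that $p^* \in L$: the one-complex-dimensional slice $L \cap \Omega$ is a disk-biholomorphic convex subdomain of $L$, so it furnishes explicit hyperbolic upper bounds on $K_\Omega$ along this slice, and the $C^{1,\alpha}$ regularity of $\partial \Omega$ near $x$ should allow one to approximate $p_n$ by a nearby point on $L$ with controlled Kobayashi cost. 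The delicate step, and the principal technical difficulty, is precisely this last approximation, where the Hölder (rather than merely $C^1$) regularity of the boundary is essential to ensure uniformity in the direction of approach.
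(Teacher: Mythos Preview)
Your Part (1) has a genuine gap. Integrating the crude estimate $k_\Omega(z;v) \le \|v\|/\delta_\Omega(z)$ along the Euclidean segment $[p,q]$ and using concavity of $\delta_\Omega$ yields only
\[
K_\Omega(p,q) \le \int_0^1 \frac{\|p-q\|}{(1-t)\,\delta_\Omega(p)+t\,\delta_\Omega(q)}\,dt,
\]
which for $\delta_\Omega(p)=\delta_\Omega(q)=a$ equals $\|p-q\|/a$ --- linear, not logarithmic, in $\|p-q\|/a$. Taking $\delta_\Omega(p_n)=\delta_\Omega(q_m)=\epsilon^2$ and $\|p_n-q_m\|=\epsilon$ (perfectly compatible with $p_n,q_m\to x$), this bound is $1/\epsilon$, while your lower bound gives $K_\Omega(p_n,o)+K_\Omega(q_m,o)\ge 2\log(1/\epsilon)-O(1)$; the resulting lower bound on $2(p_n|q_m)_o$ tends to $-\infty$. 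The upper bound you \emph{state}, with the crucial $\tfrac12$ factors, is correct but does not follow from straight-segment integration; it genuinely requires the $C^{1,\alpha}$ hypothesis. The paper obtains it via Proposition~\ref{prop:rough_geodesics}, which shows that each inward normal ray $t\mapsto x+\epsilon e^{-2t}n_x$ is a $(1,\log K)$-quasi-geodesic. One then writes $p_n=\sigma_{x_n}(s_n)$ and $q_m=\sigma_{y_m}(t_m)$ for nearest boundary points $x_n,y_m\to x$ and compares along these normal rays through a common finite height $T$, so that the ``horizontal'' cost $K_\Omega(\sigma_{x_n}(T),\sigma_{y_m}(T))$ is bounded while the two ``vertical'' costs contribute exactly $\tfrac12\log(1/\delta_\Omega(p_n))+\tfrac12\log(1/\delta_\Omega(q_m))-2T+O(1)$.

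For Part (2) you correctly isolate the difficulty: projecting via a single linear functional $g_x$ or $g_y$ gives only $K_\Omega(p_n,q_m)\gtrsim\max\bigl(\tfrac12\log\tfrac{1}{\delta_\Omega(p_n)},\,\tfrac12\log\tfrac{1}{\delta_\Omega(q_m)}\bigr)$, not the sum, and your proposed repair by approximating $p_n$ with a point on the fixed complex line $L$ is, as you concede, not carried out. The paper sidesteps this entirely. Its Lemma~\ref{lem:dist} takes an actual Kobayashi geodesic $\sigma:[0,T]\to\Omega$ from $p$ to $q$ and shows, by a compactness argument needing only $C^1$ regularity, that since $T_x^{\Cb}\partial\Omega\neq T_y^{\Cb}\partial\Omega$ the geodesic must pass through a point $\sigma(S)$ at uniform Euclidean distance $\ge r>0$ from \emph{both} relevant complex tangent hyperplanes. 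Splitting the geodesic at $\sigma(S)$ and applying the hyperplane projection bound (Lemma~\ref{lem:hyperplanes}) to each half gives directly
\[
K_\Omega(p,q)=K_\Omega(p,\sigma(S))+K_\Omega(\sigma(S),q)\ge\tfrac12\log\tfrac{1}{\delta_\Omega(p)}+\tfrac12\log\tfrac{1}{\delta_\Omega(q)}+\log r.
\]
Combined with the $C^{1,\alpha}$ upper bound $K_\Omega(o,\cdot)\le\tfrac12\log\tfrac{1}{\delta_\Omega(\cdot)}+C$ coming from Proposition~\ref{prop:rough_geodesics}, this bounds the Gromov product uniformly.
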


Although this behavior is much weaker than the finite type case, we can still use Theorem~\ref{thm:gromov_prod_i} to prove variants of Theorem~\ref{thm:gromov_wd} and Theorem~\ref{thm:gromov_visible}. For instance, Theorem~\ref{thm:gromov_prod_i} and Karlsson's~\cite{K2001} work on the behavior of 1-Lipschitz maps on general metric spaces imply the following:

\begin{theorem}\label{thm:wolf_i}(see Theorem~\ref{thm:wolf} below)
 Suppose $\Omega \subset \Cb^d$ is a bounded convex domain with $C^{1,\alpha}$ boundary. If $f:\Omega \rightarrow \Omega$ is 1-Lipschitz with respect to the Kobayashi metric, then $f$ either has a fixed point in $\Omega$ or there exists some $x \in \partial \Omega$ so that 
 \begin{align*}
 \lim_{k \rightarrow \infty} d_{Euc}(f^{k}(p) , T_x^{\Cb} \partial \Omega) = 0
 \end{align*}
 for all $p \in \Omega$. 
 \end{theorem}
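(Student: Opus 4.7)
The plan is to apply Karlsson's general Wolff--Denjoy dichotomy for 1-Lipschitz self-maps of proper metric spaces~\cite{K2001} to $(\Omega, K_\Omega)$, and then use Theorem~\ref{thm:gromov_prod_i} to translate the resulting convergence at infinity into Euclidean convergence to a complex tangent hyperplane. The necessary hypotheses are satisfied because $(\Omega, K_\Omega)$ is a proper geodesic metric space (Kobayashi balls in a bounded convex domain are relatively compact, and Lempert's theorem provides complex geodesics between any two points) and $K_\Omega$ is convex along geodesics on convex domains.

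Karlsson's machinery together with Calka's theorem on non-expansive maps yields the dichotomy: either every orbit $\{f^n p\}$ is relatively compact in $\Omega$, or $K_\Omega(f^n p, p) \to \infty$ as $n \to \infty$ for every $p \in \Omega$. In the first case, I would produce a fixed point of $f$ by a circumcenter argument. The $\omega$-limit set
\begin{align*}
L := \bigcap_{N \geq 0} \overline{\{f^n p : n \geq N\}}
\end{align*}
is compact and satisfies $f(L) = L$; the function $r(q) := \max_{z \in L} K_\Omega(q, z)$ is proper on $\Omega$ and convex along geodesics, so it attains its minimum on a non-empty compact subset $M \subset \Omega$, which $f$ preserves (using 1-Lipschitzness and $f(L)=L$). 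A standard fixed-point theorem then yields a fixed point of $f$ in $M$, giving the first alternative.

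For the escaping case, 1-Lipschitzness gives $K_\Omega(f^n p, f^n p') \leq K_\Omega(p, p')$ for every $n$ and all $p, p' \in \Omega$, so all orbits stay at bounded Kobayashi distance from each other. Hence for any subsequential boundary limits $f^{n_k} p \to x$ and $f^{m_l} p' \to y$, the Gromov product satisfies
\begin{align*}
(f^{n_k} p | f^{m_l} p')_o \geq \frac{1}{2}\bigl(K_\Omega(o, f^{n_k} p) + K_\Omega(o, f^{m_l} p') - K_\Omega(p, p')\bigr) \longrightarrow \infty,
\end{align*}
so Theorem~\ref{thm:gromov_prod_i}(2) forces $T_x^{\Cb} \partial \Omega = T_y^{\Cb} \partial \Omega$. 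Setting $H := T_x^{\Cb} \partial \Omega$, every boundary accumulation point of every orbit then lies on $H$. If $d_{Euc}(f^n p, H) \not\to 0$, some subsequence would stay at Euclidean distance $\geq \varepsilon > 0$ from $H$; but $K_\Omega(f^n p, p) \to \infty$ forces every $\overline{\Omega}$-accumulation point into $\partial \Omega$, so a subsubsequential limit $z \in \partial \Omega$ would satisfy $d_{Euc}(z, H) \geq \varepsilon$ while also lying in $T_z^{\Cb} \partial \Omega = H$, a contradiction.

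The main obstacle I anticipate is the bounded-orbit case. Because the Kobayashi metric on a general bounded convex domain is not CAT(0) (the bidisk is not), the classical Bruhat--Tits fixed-point argument cannot be applied verbatim, and one must verify the circumcenter/fixed-point step using only Busemann convexity of $K_\Omega$. Once that is in hand, the escaping case is a clean application of the weak Gromov hyperbolicity furnished by Theorem~\ref{thm:gromov_prod_i}.
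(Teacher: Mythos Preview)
Your escaping-orbit argument contains a genuine error. You write
\begin{align*}
(f^{n_k} p \,|\, f^{m_l} p')_o \geq \tfrac{1}{2}\bigl(K_\Omega(o, f^{n_k} p) + K_\Omega(o, f^{m_l} p') - K_\Omega(p, p')\bigr),
\end{align*}
which would require $K_\Omega(f^{n_k} p, f^{m_l} p') \leq K_\Omega(p,p')$. But $1$-Lipschitzness only gives $K_\Omega(f^n p, f^n p') \leq K_\Omega(p,p')$ for the \emph{same} iterate; for different iterates there is no such bound. Indeed, taking $p=p'$ your inequality would force $K_\Omega(f^{n_k}p, f^{m_l}p)\leq 0$, which is absurd. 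So as written you have not shown that any two subsequential boundary limits of an orbit share the same complex tangent hyperplane.

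The missing idea is Karlsson's maximizing-subsequence trick, which the paper uses explicitly: choose $n_i\to\infty$ with $K_\Omega(f^{n_i}o,o) > K_\Omega(f^m o,o)$ for all $m<n_i$, and pass to a subsequence so that $f^{n_i}o\to x$. Then for any $p$ and any $m_j\to\infty$, pick $i_j$ with $n_{i_j}>m_j$ and use $1$-Lipschitzness in the form
\begin{align*}
K_\Omega(f^{n_{i_j}}o,\, f^{m_j}p) \leq K_\Omega(f^{n_{i_j}-m_j}o,\, p) \leq K_\Omega(f^{n_{i_j}-m_j}o,o)+K_\Omega(o,p) < K_\Omega(f^{n_{i_j}}o,o)+K_\Omega(o,p),
\end{align*}
which gives $2(f^{n_{i_j}}o\,|\,f^{m_j}p)_o \geq K_\Omega(o,f^{m_j}p)-K_\Omega(o,p)\to\infty$. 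Only then does Theorem~\ref{thm:gromov_prod_i}(2) force every limit point of $\{f^{m_j}p\}$ into $T_x^{\Cb}\partial\Omega$.

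For the bounded-orbit case, the paper avoids any delicate Busemann-convexity circumcenter argument altogether: it uses Abate's result that closed Kobayashi balls in a bounded convex domain are convex as subsets of $\Cb^d$, finds by Zorn's lemma a minimal compact set $K_0$ with $f(K_0)=K_0$, observes that $\bigcap_{k\in K_0} B_\Omega(k;R)$ is a nonempty closed convex subset of $\Cb^d$ mapped into itself by $f$, and applies Brouwer's fixed-point theorem. This is both simpler and more robust than the route you sketch.
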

 
 \begin{remark}
 Abate and Raissy~\cite{AR2014} proved Theorem~\ref{thm:wolf_i} with the additional assumption that $\partial \Omega$ is $C^2$.  
  \end{remark}

Using Theorem~\ref{thm:wolf_i} we can characterize the automorphisms of $\Omega$ into elliptic, hyperbolic, and parabolic elements. Suppose $\Omega \subset \Cb^d$ is a bounded convex domain with $C^{1,\alpha}$ boundary and $\varphi \in \Aut(\Omega)$. Then by Theorem~\ref{thm:wolf_i} either $\varphi$ has a fixed point in $\Omega$ or there exists a complex supporting hyperplane $H_{\varphi}^+$ of $\Omega$ so that 
\begin{align*}
 \lim_{k \rightarrow \infty} d_{Euc}(\varphi^k p , H_\varphi^+) = 0
 \end{align*}
 for all $p \in \Omega$. In this latter case, we call $H_{\varphi}^+$ the \emph{attracting hyperplane of} $\varphi$. 
 
  \begin{definition}
Suppose $\Omega \subset \Cb^d$ is a bounded convex domain with $C^{1,\alpha}$ boundary and $\varphi \in \Aut(\Omega)$. Then:
\begin{enumerate}
\item $\varphi$ is \emph{elliptic} if $\varphi$ has a fixed point in $\Omega$, 
\item $\varphi$ is \emph{parabolic} if $\varphi$ has no fixed point in $\Omega$ and $H_{\varphi}^+ = H_{\varphi^{-1}}^+$,
\item $\varphi$ is \emph{hyperbolic} if $\varphi$ has no fixed points in $\Omega$ and $H_{\varphi}^+ \neq H_{\varphi^{-1}}^+$. In this case we call $H_{\varphi}^- : = H_{\varphi^{-1}}^+$  the \emph{repelling hyperplane of} $\varphi$.
\end{enumerate}
\end{definition}

\begin{remark} Notice that Theorem~\ref{thm:wolf_i} implies that every automorphism of $\Omega$ is either elliptic, hyperbolic, or parabolic. \end{remark}

We can also use Theorem~\ref{thm:gromov_prod_i} to establish a version of Theorem~\ref{thm:gromov_visible} for convex domains:

\begin{theorem}\label{thm:visible}(see Theorem~\ref{thm:geod_converge} below)
 Suppose $\Omega \subset \Cb^d$ is a bounded convex domain with $C^{1,\alpha}$ boundary. If $x,y \in \partial \Omega$ and $V_x, V_y$ are neighborhoods of $T_x^{\Cb} \partial \Omega \cap \partial \Omega$, $T_y^{\Cb} \partial \Omega \cap \partial \Omega$ in $\overline{\Omega}$ so that $\overline{V_x} \cap \overline{V_y} = \emptyset$, then there exists a compact set $K \subset \Omega$ with the following property: if $\sigma: [0,T] \rightarrow (\Omega,K_\Omega)$ is a geodesic with $\sigma(0) \in V_x$ and $\sigma(T) \in V_y$,  then $\sigma \cap K \neq \emptyset$. 
\end{theorem}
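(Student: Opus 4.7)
The plan is to argue by contradiction. Suppose no such compact $K$ exists. Fixing a basepoint $z_0 \in \Omega$, for each $n \in \Nb$ I take a Kobayashi geodesic $\sigma_n: [0, T_n] \to (\Omega, K_\Omega)$ with $\sigma_n(0) \in V_x$ and $\sigma_n(T_n) \in V_y$, whose image avoids the Kobayashi ball of radius $n$ about $z_0$. Since $K_\Omega(z_0, \cdot)$ blows up at $\partial\Omega$, the image of $\sigma_n$ approaches $\partial\Omega$ uniformly; passing to a subsequence, $\sigma_n(0) \to p_\infty \in \overline{V_x} \cap \partial\Omega$ and $\sigma_n(T_n) \to q_\infty \in \overline{V_y} \cap \partial\Omega$, and $p_\infty \neq q_\infty$ because the closures of $V_x$ and $V_y$ are disjoint.

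The heart of the argument will be to show $(\sigma_n(0) \mid \sigma_n(T_n))_{z_0} \to \infty$ along some subsequence, after which Theorem~\ref{thm:gromov_prod_i}(2) will force $T_{p_\infty}^{\Cb}\partial\Omega = T_{q_\infty}^{\Cb}\partial\Omega$, placing $p_\infty$ and $q_\infty$ in a common closed complex face $F$. If $T_n$ is bounded along a subsequence the blow-up is immediate, since $K_\Omega(\sigma_n(0), \sigma_n(T_n)) = T_n$ stays bounded while the distances from $z_0$ to the endpoints diverge. If instead $T_n \to \infty$, the plan is to introduce a point $m_n = \sigma_n(s_n)$ minimizing $K_\Omega(z_0, \cdot)$ along $\sigma_n$ and to analyze the two subgeodesics meeting at $m_n$, using that each endpoint lies at Kobayashi distance at least $K_\Omega(z_0, m_n) \to \infty$ from $z_0$; extracting further subsequences and pairing endpoints with interior points of the geodesic to invoke Theorem~\ref{thm:gromov_prod_i}(1) (divergence of the Gromov product for sequences sharing a boundary limit) should force the divergence of $(\sigma_n(0) \mid \sigma_n(T_n))_{z_0}$.

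Given the common complex face $F$ containing $p_\infty$ and $q_\infty$, the contradiction comes from the fact that in a $C^1$ bounded convex domain distinct closed complex faces are disjoint: the relation $p_\infty \in F$ gives $F_{p_\infty} = F$, so if $p_\infty \in F_x$ then $F = F_x$, and similarly if $q_\infty \in F_y$ then $F = F_y$, yielding $F_x = F_y$ and hence $F_x \subset V_x \cap V_y$, contradicting $\overline{V_x} \cap \overline{V_y} = \emptyset$; the remaining subcase (in which $p_\infty \notin F_x$ or $q_\infty \notin F_y$) is handled by tracing the real segment from $p_\infty$ to $q_\infty$, which is contained in $F \subset \partial\Omega$ by convexity, and observing that its endpoints' faces cannot both be accommodated in disjoint neighborhoods of $F_x$ and $F_y$. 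The main obstacle is the Gromov-product blow-up in the long-geodesic case: in the absence of Gromov hyperbolicity the Gromov product is not simply the distance from $z_0$ to the geodesic, so the subsequence extraction that makes Theorem~\ref{thm:gromov_prod_i} applicable must be carried out with care, leveraging convexity of $\Omega$ and $C^{1,\alpha}$ regularity of $\partial\Omega$.
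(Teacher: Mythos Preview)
Your setup (argue by contradiction, extract geodesics $\sigma_n$ escaping every Kobayashi ball, pass to subsequential boundary limits $p_\infty,q_\infty$ of the endpoints) is fine, but the central step---proving $(\sigma_n(0)\mid\sigma_n(T_n))_{z_0}\to\infty$ in the long-geodesic case---cannot be carried out, and in fact the claim is false in the situation that matters.

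Suppose, after passing to a subsequence, that $T_{p_\infty}^{\Cb}\partial\Omega\neq T_{q_\infty}^{\Cb}\partial\Omega$. Then Lemma~\ref{lem:dist} gives
\[
K_\Omega\bigl(\sigma_n(0),\sigma_n(T_n)\bigr)\ \geq\ \tfrac12\log\frac{1}{\delta_\Omega(\sigma_n(0))}+\tfrac12\log\frac{1}{\delta_\Omega(\sigma_n(T_n))}-C,
\]
while Proposition~\ref{prop:rough_geodesics} (the inward normal almost-geodesics) gives $K_\Omega(z_0,\sigma_n(0))\leq\tfrac12\log\frac{1}{\delta_\Omega(\sigma_n(0))}+C$, and similarly for the other endpoint. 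Substituting into the Gromov product yields $(\sigma_n(0)\mid\sigma_n(T_n))_{z_0}\leq\tfrac{3}{2}C$: bounded, regardless of whether the geodesics escape every compact. This is exactly the computation behind part~(2) of Theorem~\ref{thm:gromov_prod}. So the only way your target inequality can hold is if $p_\infty$ and $q_\infty$ already share a complex face, which makes the argument circular. Your proposed mechanism via the closest point $m_n$ cannot repair this: part~(1) of Theorem~\ref{thm:gromov_prod_i} needs the two sequences to converge to the \emph{same} boundary point, which you have no control over; and the identity
\[
(\sigma_n(0)\mid\sigma_n(T_n))_{z_0}=(\sigma_n(0)\mid m_n)_{z_0}+(m_n\mid\sigma_n(T_n))_{z_0}-K_\Omega(z_0,m_n),
\]
valid since $m_n$ lies on the geodesic, shows that controlling the two mixed products would not suffice anyway, as $K_\Omega(z_0,m_n)\to\infty$.

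The paper's argument avoids the endpoints altogether. It shows that any \emph{intermediate} point $\sigma_n(S_n)$ escaping to $\partial\Omega$ must land in $T_{p_\infty}^{\Cb}\partial\Omega\cup T_{q_\infty}^{\Cb}\partial\Omega$: applying Lemma~\ref{lem:dist} on each of the two subsegments produces an extra $\log\frac{1}{\delta_\Omega(\sigma_n(S_n))}$ in the lower bound for $K_\Omega(\sigma_n(0),\sigma_n(T_n))$, which is incompatible with the upper bound coming from Proposition~\ref{prop:rough_geodesics} unless $\delta_\Omega(\sigma_n(S_n))$ stays bounded away from zero. Since each $\sigma_n$ is connected and runs from $V_x$ to $V_y$ with $\overline{V_x}\cap\overline{V_y}=\emptyset$, one then picks $S_n$ so that $\sigma_n(S_n)$ lies outside both neighborhoods; such a point cannot accumulate on either face, hence stays in a fixed compact subset of $\Omega$, contradicting the assumption. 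The key input you are missing is this two-sided comparison of $K_\Omega$ with $\tfrac12\log\frac{1}{\delta_\Omega}$, not the Gromov product of the endpoints.
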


\subsection{Outline of the proof of Theorem~\ref{thm:main}}

The difficult direction of Theorem~\ref{thm:main} is to show that $\Omega$ is biholomorphic to a polynomial ellipsoid when the limit set intersects at least two different closed complex faces. The main steps in the proof of this direction are the following: 
\begin{enumerate}
\item Use the metric properties described in Subsection~\ref{subsec:neg_curv_kob} to show that $\Aut(\Omega)$ contains a hyperbolic element $\varphi$ and an orbit $\{\varphi^n(o) : n \in \Nb\}$ of this element converges non-tangentially to a boundary point $x^+_{\varphi}$ (see Sections~\ref{sec:axial_autos} and~\ref{sec:finding_hyp}).
\item Use a rescaling argument and the metric properties described in Subsection~\ref{subsec:neg_curv_kob} to show that $x^+_{\varphi}$ has finite type in the sense of D'Angelo (see Section~\ref{sec:greene_krantz}).
\item Use another rescaling argument to show that the entire boundary has finite type (see Section~\ref{sec:entire_bd_finite_type}).
\item Apply Bedford and Pinchuk's result to deduce that $\Omega$ is biholomorphic to a polynomial ellipsoid. 
\end{enumerate}

\subsection{Other applications of Theorem~\ref{thm:gromov_prod_i}} In this subsection we describe some other applications of Theorem~\ref{thm:gromov_prod_i} (many of which are used in the proof of Theorem~\ref{thm:main}). 

\subsubsection{Boundary extensions} A convex domain $\Omega \subset \Cb^d$ is called \emph{$\Cb$-strictly convex} if every supporting complex hyperplane intersects $\partial \Omega$ at exactly one point. When $\partial \Omega$ is $C^1$ this is equivalent to $T_{x}^{\Cb} \partial \Omega \cap \partial \Omega = \{x\}$ for every $x \in \partial \Omega$. For $\Cb$-strictly convex domains we have the following corollary of Theorem~\ref{thm:gromov_prod_i}:

\begin{corollary}
Suppose $\Omega \subset \Cb^d$ is a bounded $\Cb$-strictly convex domain with $C^{1,\alpha}$ boundary. If $p_n, q_m \in \Omega$ are sequences such that $p_n \rightarrow x \in \partial \Omega$ and $q_m \rightarrow y \in \partial \Omega$, then 
\begin{align*}
\lim_{n, m \rightarrow \infty} (p_n|q_m)_o= \infty \text{ if and only if } x=y.
\end{align*}
\end{corollary}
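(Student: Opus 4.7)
The plan is to deduce this corollary directly from Theorem~\ref{thm:gromov_prod_i} together with the characterization of $\Cb$-strict convexity under $C^1$ regularity, namely that $T_z^{\Cb}\partial\Omega \cap \partial\Omega = \{z\}$ for every $z \in \partial\Omega$. Essentially no new argument is required beyond unpacking definitions, so I will structure the write-up as a short two-part deduction.

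First, I would handle the easy direction. If $x = y$, then part (1) of Theorem~\ref{thm:gromov_prod_i} gives directly
\begin{align*}
\lim_{n,m \to \infty} (p_n \mid q_m)_o = \infty,
\end{align*}
so there is nothing to do.

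For the converse, suppose that $\lim_{n,m \to \infty} (p_n \mid q_m)_o = \infty$. Then in particular $\limsup_{n,m \to \infty} (p_n \mid q_m)_o = \infty$, so part (2) of Theorem~\ref{thm:gromov_prod_i} applies and yields $T_x^{\Cb}\partial\Omega = T_y^{\Cb}\partial\Omega$. Since $\Omega$ is $\Cb$-strictly convex and $\partial\Omega$ is $C^{1,\alpha}$ (in particular $C^1$), we may invoke the characterization recalled just before the statement: every complex tangent hyperplane meets $\partial\Omega$ at a single point, so
\begin{align*}
\{x\} = T_x^{\Cb}\partial\Omega \cap \partial\Omega = T_y^{\Cb}\partial\Omega \cap \partial\Omega = \{y\}.
\end{align*}
Hence $x = y$, as required.

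There is no genuine obstacle here: the entire content sits inside Theorem~\ref{thm:gromov_prod_i}, and the role of $\Cb$-strict convexity is only to upgrade the conclusion ``the closed complex faces agree'' to ``the boundary points agree.'' The only thing I would double-check while writing is that the ambient $C^{1,\alpha}$ hypothesis of Theorem~\ref{thm:gromov_prod_i} is compatible with the $C^1$ hypothesis needed to identify $\Cb$-strict convexity with $T_z^{\Cb}\partial\Omega \cap \partial\Omega = \{z\}$, which is indeed the case.
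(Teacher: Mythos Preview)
Your proposal is correct and matches the paper's intent exactly: the paper states this result as an immediate corollary of Theorem~\ref{thm:gromov_prod_i} without giving a separate proof, and your two-line deduction (part (1) for one direction, part (2) plus $\Cb$-strict convexity for the other) is precisely the argument implicit in calling it a corollary.
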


As an application of this corollary we will prove the following result about boundary extensions:

\begin{theorem}\label{thm:cont_ext_i} (see Theorem~\ref{thm:cont_ext} below)\label{thm:cont_ext_i} Suppose $\Omega_1 \subset \Cb^{d_1}$ and $\Omega_2 \subset \Cb^{d_2}$ are bounded convex domains with $C^{1,\alpha}$ boundaries. If $\Omega_2$ is $\Cb$-strictly convex, then every isometric embedding $f:(\Omega_1,K_{\Omega_1}) \rightarrow (\Omega_2,K_{\Omega_2})$ extends to a  continuous map $\overline{f}: \overline{\Omega_1} \rightarrow \overline{\Omega_2}$. 
\end{theorem}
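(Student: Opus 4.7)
The plan is to extend $f$ pointwise to $\partial \Omega_1$ using the behavior of the Gromov product from Theorem~\ref{thm:gromov_prod_i}, and then invoke the Corollary above (which requires $\Cb$-strict convexity of the target) to force any two candidate boundary limits to coincide. Once $\overline{f}$ is defined, continuity on $\Omega_1$ is automatic since an isometric embedding is $1$-Lipschitz with respect to the Kobayashi distances, and continuity at a boundary point will follow from a standard approximation.

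First I would construct $\overline{f}\vert_{\partial \Omega_1}$. Fix a basepoint $o \in \Omega_1$, pick $x \in \partial \Omega_1$, and let $p_n \in \Omega_1$ be any sequence with $p_n \to x$. Since the Kobayashi distance on a bounded convex domain blows up at the boundary we have $K_{\Omega_1}(o, p_n) \to \infty$, and because $f$ is isometric $K_{\Omega_2}(f(o), f(p_n)) \to \infty$, forcing every subsequential Euclidean limit of $f(p_n)$ to lie in $\partial \Omega_2$. To see such limits are unique, suppose $f(p_{n_k}) \to y_1$ and $f(p_{m_j}) \to y_2$ along two subsequences. By Theorem~\ref{thm:gromov_prod_i}(1) we have $(p_{n_k}|p_{m_j})_o \to \infty$ as $k,j \to \infty$, hence by the isometry property $(f(p_{n_k})|f(p_{m_j}))_{f(o)} \to \infty$, and the Corollary above then yields $y_1 = y_2$. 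So $\overline{f}(x) := \lim_n f(p_n)$ is well defined; independence from the choice of sequence $p_n \to x$ follows by interleaving two such sequences and applying the same argument.

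Next I would verify continuity of $\overline{f}: \overline{\Omega_1} \to \overline{\Omega_2}$. Continuity on $\Omega_1$ is clear, so let $x \in \partial \Omega_1$ and suppose $x_n \to x$ in $\overline{\Omega_1}$. For each $n$ I would choose $p_n \in \Omega_1$ with $\abs{p_n - x_n} < 1/n$ and $\abs{f(p_n) - \overline{f}(x_n)} < 1/n$, which is possible by the defining property of $\overline{f}(x_n)$ when $x_n \in \partial \Omega_1$ and trivially (take $p_n = x_n$) when $x_n \in \Omega_1$. Then $p_n \to x$ in the Euclidean topology, so $f(p_n) \to \overline{f}(x)$ by construction of $\overline{f}$ at $x$; combining this with $\abs{\overline{f}(x_n) - f(p_n)} < 1/n$ gives $\overline{f}(x_n) \to \overline{f}(x)$.

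The main obstacle in this outline is genuinely just the uniqueness-of-boundary-limit step, and it is precisely there that $\Cb$-strict convexity of $\Omega_2$ is used: without this hypothesis Theorem~\ref{thm:gromov_prod_i}(2) would only force $y_1$ and $y_2$ to lie in a common closed complex face of $\partial \Omega_2$, not to be equal. Everything else is a formal consequence of the Kobayashi distance being a complete proper metric on a bounded convex domain, together with the $1$-Lipschitz property of an isometric embedding.
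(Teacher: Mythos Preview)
Your argument is correct and is essentially the same as the paper's own proof: both use Theorem~\ref{thm:gromov_prod_i}(1) to push the Gromov product to infinity along sequences approaching a boundary point, transfer this via the isometry, and then use part~(2) together with $\Cb$-strict convexity of $\Omega_2$ to force uniqueness of boundary limits, followed by the same $1/n$-approximation trick to check continuity at boundary points.
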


In particular:

\begin{corollary}
Suppose $\Omega_1, \Omega_2 \subset \Cb^{d}$ are bounded convex domains with $C^{1,\alpha}$ boundaries. If $\Omega_2$ is $\Cb$-strictly convex, then every biholomorphism $f:\Omega_1 \rightarrow \Omega_2$ extends to a continuous map $\overline{f}: \overline{\Omega_1} \rightarrow \overline{\Omega_2}$. 
\end{corollary}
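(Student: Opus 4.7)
The plan is to deduce the corollary as an immediate consequence of Theorem~\ref{thm:cont_ext_i}. The key observation is that the Kobayashi pseudodistance is a biholomorphic invariant: a holomorphic map $f:\Omega_1 \to \Omega_2$ satisfies $K_{\Omega_2}(f(p),f(q)) \leq K_{\Omega_1}(p,q)$ for all $p,q \in \Omega_1$, and applying this inequality to both $f$ and $f^{-1}$ shows that any biholomorphism $f:\Omega_1 \to \Omega_2$ is actually an isometry of the Kobayashi distances. Since bounded convex domains have $K_{\Omega_i}$ an honest distance (not just a pseudodistance), a biholomorphism $f$ qualifies as an isometric embedding $(\Omega_1,K_{\Omega_1}) \to (\Omega_2,K_{\Omega_2})$ in the sense of Theorem~\ref{thm:cont_ext_i}.

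Once this reduction is observed, the hypothesis of Theorem~\ref{thm:cont_ext_i} is satisfied directly: $\Omega_1$ and $\Omega_2$ are bounded convex domains with $C^{1,\alpha}$ boundaries, and $\Omega_2$ is $\Cb$-strictly convex by assumption. The theorem then yields a continuous extension $\overline{f}:\overline{\Omega_1} \to \overline{\Omega_2}$, which is exactly what the corollary asserts.

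There is essentially no obstacle; the only thing to record is the standard fact that biholomorphisms are $K$-isometries, which follows from the contraction property of the Kobayashi distance under holomorphic maps. In particular, nothing here requires that $d_1 = d_2$ in the statement of the general Theorem~\ref{thm:cont_ext_i}, so the corollary is just the special case where both domains sit in the same $\Cb^d$ and $f$ is invertible.
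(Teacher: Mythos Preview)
Your proposal is correct and matches the paper's approach: the corollary is stated without proof in the paper precisely because it follows immediately from Theorem~\ref{thm:cont_ext_i} via the standard fact that biholomorphisms are isometries of the Kobayashi distance.
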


\begin{corollary}
Suppose $\Omega \subset \Cb^{d}$ is a bounded $\Cb$-strictly convex domain with $C^{1,\alpha}$ boundary. Then every complex geodesic $\varphi: \Delta \rightarrow \Omega$ extends to a continuous map $\overline{\varphi}: \overline{\Delta} \rightarrow \overline{\Omega}$. 
\end{corollary}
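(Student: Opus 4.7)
The plan is to derive this corollary directly from Theorem~\ref{thm:cont_ext_i}. By definition, a complex geodesic $\varphi:\Delta \to \Omega$ is a holomorphic map which is an isometric embedding of $(\Delta, K_\Delta)$ into $(\Omega, K_\Omega)$, where $K_\Delta$ is the Poincar\'e (= Kobayashi) distance on the unit disk. In particular, $\varphi$ is a Kobayashi isometric embedding of one bounded convex domain into another. Taking $\Omega_1 = \Delta \subset \Cb^1$ and $\Omega_2 = \Omega$ in Theorem~\ref{thm:cont_ext_i}, both hypotheses are met: the boundary of $\Delta$ is real analytic, and hence $C^{1,\alpha}$, while $\Omega$ is bounded, $\Cb$-strictly convex, and has $C^{1,\alpha}$ boundary by assumption. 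Theorem~\ref{thm:cont_ext_i} therefore supplies the desired continuous extension $\overline{\varphi}:\overline{\Delta}\to\overline{\Omega}$.

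Since the corollary is an immediate specialization of Theorem~\ref{thm:cont_ext_i} with $\Omega_1 = \Delta$, there is no new obstacle to overcome; the conceptual work has already been done. For transparency, it is worth recalling where $\Cb$-strict convexity of the target is used in the underlying argument. Given a sequence $p_n \in \Delta$ with $p_n \to \zeta \in \partial \Delta$, the first clause of Theorem~\ref{thm:gromov_prod_i} applied to $\Delta$ gives $(p_n|p_m)_o \to \infty$, and because $\varphi$ is a Kobayashi isometry one automatically obtains $(\varphi(p_n)|\varphi(p_m))_{\varphi(o)} \to \infty$ in $(\Omega, K_\Omega)$. By the second clause of Theorem~\ref{thm:gromov_prod_i}, any two subsequential limits of $\varphi(p_n)$ in $\overline{\Omega}$ must lie in a common closed complex face of $\partial \Omega$; $\Cb$-strict convexity of $\Omega$ collapses every such face to a single point, so $\lim_n \varphi(p_n)$ exists and depends only on $\zeta$. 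This defines $\overline{\varphi}(\zeta)$, and a standard diagonal argument upgrades pointwise convergence to continuity of $\overline{\varphi}$ on $\overline{\Delta}$.
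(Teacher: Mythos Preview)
Your proposal is correct and matches the paper's approach exactly: the corollary is stated in the paper without proof precisely because it is the specialization of Theorem~\ref{thm:cont_ext_i} to $\Omega_1=\Delta$, and your recap of how $\Cb$-strict convexity and Theorem~\ref{thm:gromov_prod_i} combine mirrors the argument given for Theorem~\ref{thm:cont_ext}.
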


\begin{remark}\ 
\begin{enumerate}
\item As mentioned above, Theorem~\ref{thm:cont_ext_i} is a consequence of the behavior of the Gromov product on convex domains. It is worth mentioning that Forstneri{\v{c}} and Rosay~\cite[Theorem 1.1]{FR1987} also used the behavior of the Gromov product (without using this terminology) to establish continuous boundary extensions of proper holomorphic maps between strongly pseudoconvex domains.
\item There is also an alternative approach to proving boundary extensions of holomorphic maps which uses lower bounds on the infinitesimal Kobayashi metric and a Hardy-Littlewood type lemma, see for instance~\cite{DF1979, CHL1988, M1993, B2016}. These arguments only appear to work when the infinitesimal Kobayashi metric obeys some estimate of the form 
\begin{align*}
k_\Omega(x;v) \geq \frac{\norm{v}}{f(d_{\Euc}(x,\partial\Omega))}
\end{align*}
where $\int_0^\epsilon \frac{f(r)}{r} dr < \infty$, see~\cite{BZ2016}. It is possible to construct smoothly bounded $\Cb$-strictly convex domains where the infinitesimal Kobayash metric fails to satisfy such estimates.
\end{enumerate}
\end{remark}

\subsubsection{Non-existence of holomorphic maps} We will also use Theorem~\ref{thm:gromov_prod} to show that certain holomorphic maps $f:\Delta \times \Delta \rightarrow \Omega$ cannot exist:

 \begin{theorem}\label{thm:prod_map_i}(see Theorem~\ref{thm:prod_map} below) 
 Suppose $\Omega \subset \Cb^d$ is a bounded convex domain with $C^{1,\alpha}$ boundary. Then there does not exist a holomorphic map $f: \Delta \times \Delta \rightarrow \Omega$ which induces an isometric embedding $(\Delta \times \Delta, K_{\Delta \times \Delta}) \rightarrow (\Omega, K_{\Omega})$.
 \end{theorem}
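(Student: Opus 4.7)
The plan is to derive a contradiction from the hypothesis that such an $f$ exists, by showing the complex geodesic $g\colon\Delta\to\Omega$ defined by $g(z) := f(z,0)$ must lie in a complex supporting hyperplane of $\Omega$, which is impossible since $g(\Delta)\subset\Omega$. Since $K_{\Delta\times\Delta}$ equals the maximum of the two factor Kobayashi distances, $g$ is automatically an isometric embedding of $(\Delta,K_\Delta)$ into $(\Omega,K_\Omega)$, i.e.\ a complex geodesic. The goal will be to exhibit a complex affine hyperplane $H$ of support to $\Omega$ with $g(\Delta)\subset H$, which contradicts $H\cap\Omega=\emptyset$.

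The central input is a Gromov-product computation exploiting the max structure of $K_{\Delta\times\Delta}$ together with Theorem~\ref{thm:gromov_prod_i}(2). Pass to subsequences so that $f(1-1/n,0)\to x_1$ and $f(0,1-1/n)\to x_2$ in $\partial\Omega$; boundary convergence is automatic because isometry forces the $K_\Omega$-distance to $f(0,0)$ to tend to infinity. Let $d_n := K_\Delta(0,1-1/n)$. For the ``cross'' sequences $p_n := (1-1/n,0)$ and $q_n := (0,1-1/n)$ the max formula gives
\[
K_{\Delta\times\Delta}((0,0),p_n)=K_{\Delta\times\Delta}((0,0),q_n)=K_{\Delta\times\Delta}(p_n,q_n)=d_n,
\]
so $(p_n\mid q_n)_{(0,0)}=\tfrac12 d_n\to\infty$. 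By the isometric property and Theorem~\ref{thm:gromov_prod_i}(2), $T^{\Cb}_{x_1}\partial\Omega=T^{\Cb}_{x_2}\partial\Omega=:H$. Fix a complex affine functional $\lambda$ with $H=\{\lambda=0\}$ and $\Real(\lambda)<0$ on $\Omega$.

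Next I want to show that for every $\zeta\in\partial\Delta$ the radial limit $\lim_{r\to 1^-}\lambda(g(r\zeta))$ equals $0$. Fix $\zeta\in\partial\Delta$ and any $r_n\to 1^-$; to prove the limit is $0$ it suffices to show every subsequential cluster value vanishes. So assume, after passing to a subsequence, that $g(r_n\zeta)\to y\in\partial\Omega$ and (after a further subsequence) $f(0,1-1/n)\to x_2$. For $p'_n := (r_n\zeta,0)$ and $q_n := (0,1-1/n)$ the max formula yields
\[
(p'_n\mid q_n)_{(0,0)} = \tfrac12\min\!\bigl(K_\Delta(0,r_n),d_n\bigr)\to\infty,
\]
so Theorem~\ref{thm:gromov_prod_i}(2) gives $T^{\Cb}_y\partial\Omega = T^{\Cb}_{x_2}\partial\Omega = H$, hence $y\in H$ and $\lambda(y)=0$.

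It follows that $h := \lambda\circ g$ is a bounded holomorphic function on $\Delta$ with $\Real(h)<0$ and radial boundary value $0$ at every point of $\partial\Delta$. The bounded harmonic function $\Real(h)\le 0$ has boundary value $0$ almost everywhere, so by the Poisson integral representation $\Real(h)\equiv 0$; this forces $h$ to be a purely imaginary constant, which must equal its boundary value $0$. Hence $g(\Delta)\subset H$, contradicting $H\cap\Omega=\emptyset$. The main technical point requiring care is the bookkeeping for subsequences so that Theorem~\ref{thm:gromov_prod_i}(2) is applied to genuinely convergent sequences in $\partial\Omega$; once the ``cross'' Gromov-product computation is in hand, the remaining boundary-value argument is routine.
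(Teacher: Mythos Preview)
Your argument is correct and follows essentially the same route as the paper's proof of Theorem~\ref{thm:prod_map}: both compute the Gromov product of the two ``cross'' rays via the $\max$ formula for $K_{\Delta\times\Delta}$, invoke Theorem~\ref{thm:gromov_prod_i}(2) to force every radial limit of $z\mapsto f(z,0)$ into a single complex supporting hyperplane, and then derive a contradiction from a bounded holomorphic function on $\Delta$ with vanishing radial boundary values (you phrase this via the Poisson integral for $\Real(\lambda\circ g)$, the paper via the Cauchy integral and dominated convergence after rotating coordinates). The only point to tidy is the subsequence bookkeeping you already flag: fix once and for all a subsequence along which $f(0,1-1/m)\to x_2$, and for each $\zeta$ apply Theorem~\ref{thm:gromov_prod_i}(2) to the two-index family $(f(r_n\zeta,0)\mid f(0,1-1/m))$ rather than re-choosing $x_2$.
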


\begin{remark} \ \begin{enumerate}
\item If $(X,d)$ is a Gromov hyperbolic metric space then there does not exist an isometric embedding of $(\Rb^2, d_{\Euc})$ into $(X,d)$. In particular, the above Corollary shows that convex domains with $C^{1,\alpha}$ boundary have some hyperbolic behavior. 
\item The statement of Theorem~\ref{thm:prod_map} below is considerably more general and is used to prove a special case of the Greene-Krantz conjecture (see Theorem~\ref{thm:unif_tang_conv_i} below).
\end{enumerate}
\end{remark}

\subsubsection{The Greene-Krantz conjecture} The second main step in the proof of Theorem~\ref{thm:main} is related to an old conjecture of Greene and Krantz. In particular, in the 1990's Greene and Krantz conjectured:

\begin{conjecture}\cite{GK1993} Suppose that $\Omega$ is a bounded pseudoconvex domain with $C^\infty$ boundary. If $x \in \Lc(\Omega)$, then $x$ has finite type in the sense of Kohn/D'Angelo/Catlin. 
\end{conjecture}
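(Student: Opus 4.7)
The plan is to argue by contradiction: assume $x \in \Lc(\Omega)$ is a boundary point of infinite D'Angelo type, and derive a geometric obstruction. Fix $p \in \Omega$ and automorphisms $\varphi_n \in \Aut(\Omega)$ with $\varphi_n(p) \to x$. The first step is to perform a non-isotropic rescaling of $\Omega$ adapted to $x$. Using Catlin's multitype coordinates (with the caveat that at an infinite-type point this multitype is no longer finite and must be replaced by an approximating sequence of finite multitypes at nearby boundary points), one conjugates $\Omega$ by affine scalings $A_n$ chosen so that $A_n(\varphi_n(p))$ lies at a fixed interior base point of a normalized domain $\Omega_n := A_n(\Omega)$. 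The goal is to extract, along a subsequence, a taut limit domain $\Omega_\infty \subset \Cb^d$ containing the base point and a non-trivial one-parameter family of automorphisms arising as the scaling limit of compositions of the form $A_n \circ \varphi_{n+k} \circ \varphi_n^{-1} \circ A_n^{-1}$.

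Next, one analyzes the model $\Omega_\infty$. Pseudoconvexity of $\Omega$ passes to the limit, and the infinite-type hypothesis at $x$ should force $\partial \Omega_\infty$ to contain a non-trivial complex analytic variety $V$ through the limiting boundary point. The produced one-parameter subgroup of $\Aut(\Omega_\infty)$ should then be shown to preserve $V$, and a standard Hurwitz / Cartan-type argument should identify the limit holomorphic vector field on $\Omega_\infty$ and rule out its degenerating to a constant. Pulling back via an inverse normal-families argument, the presence of such an invariant complex variety together with the orbit $\varphi_n(p) \to x$ should imply that $\partial \Omega$ itself contains a germ of a positive-dimensional complex variety through $x$. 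Since $\Omega$ is pseudoconvex with $C^\infty$ boundary and such a germ cannot coexist with smooth strict pseudoconvexity at any boundary point, one would obtain the desired contradiction via the classical Diederich--Fornaess local obstruction theory and Catlin's characterization of finite type by boundary subelliptic estimates.

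The main obstacle — and the reason this conjecture remains open in the stated generality — is the rescaling step. In the convex setting treated in this paper, the metric theory of Subsection~\ref{subsec:neg_curv_kob} (in particular Theorem~\ref{thm:gromov_prod_i}, Theorem~\ref{thm:wolf_i}, and the visibility Theorem~\ref{thm:visible}) controls the Kobayashi geometry of the orbit $\{\varphi_n(p)\}$ and yields the non-tangential convergence needed to make the affine rescalings converge to a non-degenerate taut model with a non-trivial limit automorphism. For a general pseudoconvex domain with only $C^\infty$ boundary one has no comparable coarse negative-curvature theory: the Kobayashi metric may fail to be Gromov hyperbolic, the Gromov product need not behave well on sequences in distinct complex faces, and Catlin multitype is notoriously difficult to control uniformly along an orbit. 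In particular, ruling out the possibility that the rescaled automorphisms degenerate (either escaping to infinity in the coordinate frame or collapsing to the identity) is precisely the step that no known technique handles in full generality.

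A realistic outcome of this plan is therefore not a proof of the full conjecture but a reduction: conditional on (i) existence of a taut non-isotropic scaling limit at $x$ and (ii) non-triviality of the limiting automorphism action on $\Omega_\infty$, the contradiction argument sketched above should close. Verifying (i) and (ii) under hypotheses weaker than convexity — for instance for domains admitting some weak form of Gromov hyperbolicity of $(\Omega, K_\Omega)$, or for orbits converging along a family of complex tangential curves of bounded Carnot--Carath\'eodory length — appears to be the natural next target, and a successful treatment of such intermediate cases would plausibly constitute genuine progress toward the Greene--Krantz conjecture itself.
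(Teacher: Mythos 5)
The statement you are asked about is stated in the paper as a \emph{conjecture} (Greene--Krantz); the paper does not prove it, and neither do you. Your text is candid about this, but as a proof it has genuine gaps that go beyond the two conditions (i) and (ii) you flag. First, your proposed endgame is circular: the existence of a germ of a positive-dimensional complex variety in $\partial \Omega$ is \emph{not} incompatible with $\Omega$ being smoothly bounded and pseudoconvex (smooth bounded convex domains can contain analytic disks in their boundaries --- indeed the rescaled limit domains in Lemma~\ref{lem:infinite_type_rescale} have $\{0\}\times\Delta$ in their boundary); such a variety only witnesses that the point has infinite type, which is exactly your contradiction hypothesis. The appeal to ``Diederich--Fornaess local obstruction theory and Catlin's characterization'' does not convert this into a contradiction. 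Second, the steps ``the limit automorphisms preserve $V$'' and the ``Hurwitz / Cartan-type argument'' are not arguments but placeholders; nothing in your sketch explains why the rescaled automorphisms neither escape nor collapse, which you yourself identify as the open problem.

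It is also worth recording how the paper's actual partial result (Theorem~\ref{thm:unif_tang_conv}) differs from your sketch, because the mechanism is not the one you describe. The paper works only for \emph{convex} domains and under the extra hypothesis that an entire inward normal ray at $x$ lies in $\Aut(\Omega)\cdot B_\Omega(o;M)$. The rescaling is done by Frankel's theorem along that normal ray (not along an orbit with Catlin multitype coordinates), producing a limit domain $\wh{\Omega}$ biholomorphic to $\Omega$ that contains $\Hc\times\Delta$ and misses a specific complex line. The contradiction is then obtained not from an invariant variety but from Theorem~\ref{thm:prod_map}: the bidisk slice gives a holomorphic $f:\Delta\times\Delta\to\wh{\Omega}$ that is an isometry up to additive constants on two transverse slices, the Gromov product estimates of Theorem~\ref{thm:gromov_prod} force all radial boundary limits of $f(\cdot,0)$ into a single complex tangent hyperplane, and a Cauchy-integral/dominated-convergence (Fatou-type) argument rules this out for a bounded holomorphic function with positive imaginary part. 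If you want to salvage your plan, you should restrict to the convex case, replace the variety-in-the-boundary endgame with an obstruction of this metric type, and isolate the uniform non-tangential convergence hypothesis that makes the Frankel rescaling non-degenerate.
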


There are a number of partial results supporting the conjecture, see for instance the survey paper~\cite{K2013}. In Section~\ref{sec:greene_krantz}, we will prove the following special case of this conjecture: 

 \begin{theorem}\label{thm:unif_tang_conv_i}(see Theorem~\ref{thm:unif_tang_conv} below)
Suppose $\Omega \subset \Cb^d$ is a bounded convex domain with $C^\infty$ boundary. If there exists $o \in \Omega$, $x \in \partial \Omega$, $M \geq 0$, and $T \in \Rb$ so that 
\begin{align*}
\{ x + e^{-2t} n_{x} : t > T \}  \subset \Aut(\Omega) \cdot \{ p \in \Omega: K_\Omega(p,o) \leq M\},
\end{align*}
then $x$ has finite type in the sense of D'Angelo. 
\end{theorem}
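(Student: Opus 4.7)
The plan is to argue by contradiction: assume $x$ has infinite D'Angelo type and use a rescaling argument near $x$ to produce a holomorphic Kobayashi isometric embedding $\Delta \times \Delta \hookrightarrow \Omega$, contradicting Theorem~\ref{thm:prod_map_i}.

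First I would extract the orbit provided by the hypothesis. Choose $t_n \to \infty$ and set $q_n = x + e^{-2t_n} n_x$. The hypothesis supplies $\varphi_n \in \Aut(\Omega)$ and $p_n \in \Omega$ with $\varphi_n(p_n) = q_n$ and $K_\Omega(p_n, o) \leq M$; compactness of the Kobayashi ball $\overline{B_{K_\Omega}(o, M)}$ in $\Omega$ allows me to pass to a subsequence with $p_n \to p_\infty \in \Omega$. I then set up local coordinates at $x$ so that $x = 0$ and $n_x$ is the positive imaginary direction in the last coordinate $w$. For a $C^\infty$ smoothly bounded convex domain, infinite D'Angelo type at $x$ forces the closed complex face $T_x^{\Cb}\partial\Omega \cap \partial\Omega$ to contain a non-trivial complex affine disc (a theorem of McNeal on convex domains); let $v$ be a unit tangential direction along this disc, so that the local defining function vanishes identically along the complex line $x + \Cb v$.

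The heart of the argument is the rescaling. Choose affine maps $\Lambda_n : \Cb^d \to \Cb^d$ sending $q_n \mapsto (0, i)$, dilating the $w$-direction by $e^{2t_n}$, dilating the $v$-direction by $\lambda_n \to \infty$, and dilating the remaining tangential directions by intermediate scales, all calibrated so that the rescaled domains $\Omega_n := \Lambda_n(\Omega)$ converge (after passing to a subsequence) in the local Hausdorff topology to a convex domain $\wh{\Omega}$ of product form $\Cb \times \wt{\Omega}$, with the $\Cb$-factor along $v$ and $\wt{\Omega}$ a proper convex ``polynomial-ellipsoid''-type model domain. The biholomorphisms $F_n := \Lambda_n \circ \varphi_n : \Omega \to \Omega_n$ and $G_n := F_n^{-1}$ are Kobayashi isometries with $F_n(p_n) = (0, i)$. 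Since $\Omega$ is bounded, a normal families argument on $G_n$ yields a subsequential locally uniform limit $G : \wh{\Omega} \to \overline{\Omega}$ with $G((0, i)) = p_\infty$; completeness of $(\Omega, K_\Omega)$ together with the semi-continuity of the Kobayashi distance under convex Hausdorff convergence (from Theorem~\ref{thm:gromov_prod_i} and the scaling machinery of Subsection~\ref{subsec:neg_curv_kob}) forces $G(\wh{\Omega}) \subset \Omega$ and promotes $G$ to a holomorphic Kobayashi isometric embedding $(\wh{\Omega}, K_{\wh{\Omega}}) \hookrightarrow (\Omega, K_\Omega)$.

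Finally, I would produce the contradiction by constructing an isometrically embedded bidisc inside $\wh{\Omega}$. The product $\Cb$-factor of $\wh{\Omega}$ together with a complex geodesic in $\wt{\Omega}$ coming from the normal $w$-direction provides a holomorphic embedding $h : \Delta \times \Delta \to \wh{\Omega}$; showing (via a careful matching of Kobayashi distances on the product with those of $\wh{\Omega}$ using the rescaling's asymptotics) that $G \circ h$ is a Kobayashi isometric embedding then contradicts Theorem~\ref{thm:prod_map_i}. The principal obstacle I expect is the rescaling step: calibrating the scales $\lambda_n$ and the intermediate scales so that $\wh{\Omega}$ simultaneously (a) splits off a genuine $\Cb$-factor along $v$, using the full force of the infinite order of vanishing of the defining function along $v$, and (b) retains a proper, Kobayashi-hyperbolic $\wt{\Omega}$-factor at $(0, i)$. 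A secondary subtlety is promoting the normal-families limit $G$ from a distance non-increasing map to a genuine Kobayashi isometric embedding, where the Gromov product control of Theorem~\ref{thm:gromov_prod_i} is essential.
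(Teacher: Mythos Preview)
Your overall architecture---assume infinite type, rescale along the normal line, pass to a limit domain, and contradict the bidisc obstruction---matches the paper.  But two of the load-bearing steps are wrong as stated.

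\medskip

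\textbf{Infinite type does not give a disc in the boundary.}  You assert that for a smoothly bounded convex domain, infinite D'Angelo type at $x$ forces $T_x^{\Cb}\partial\Omega\cap\partial\Omega$ to contain a non-trivial complex affine disc, and you attribute this to McNeal.  McNeal's theorem says only that line type and D'Angelo type agree for convex domains; it says nothing of the sort about discs.  A convex defining function can vanish to infinite order along a tangential complex line without vanishing identically on any sub-disc: think of a boundary modelled on $\Imaginary(w)=e^{-1/\abs{z}^{2}}$ near $z=0$.  So the direction $v$ you choose need not lie in the boundary, and your rescaled limit need not split off a $\Cb$-factor.

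\medskip

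\textbf{A $\Cb$-factor would kill the argument anyway.}  Even if you could arrange $\wh{\Omega}=\Cb\times\wt{\Omega}$, this limit is not $\Cb$-proper: its Kobayashi pseudometric is degenerate in the $v$-direction.  Frankel's rescaling theorem (which is what actually promotes ``affine rescalings of $\Omega$ converge to $\wh{\Omega}$'' to ``$\Omega$ is biholomorphic to $\wh{\Omega}$'') requires the limit to lie in $\Xb_{d,0}$, i.e.\ to be $\Cb$-proper.  And your normal-families limit $G:\wh{\Omega}\to\Omega$ cannot be a Kobayashi isometric embedding, because $K_{\wh{\Omega}}$ vanishes along the $\Cb$-factor while $K_{\Omega}$ is non-degenerate.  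So neither route to transferring the bidisc into $\Omega$ is available.

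\medskip

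\textbf{What the paper does instead.}  The correct rescaling (Lemma~\ref{lem:infinite_type_rescale}) is calibrated so that the limit $\wh{\Omega}$ is still $\Cb$-proper but satisfies
\[
\Hc\times\Delta\ \subset\ \wh{\Omega}\cap(\Cb^{2}\times\{0\})\ \subset\ \{\Imaginary(z_{1})>0\},\qquad \wh{\Omega}\cap(\Cb\times\{(1,0,\dots,0)\})=\emptyset.
\]
The infinite-order vanishing is used, not to produce a disc in $\partial\Omega$, but to choose the tangential dilation rate so that $\{0\}\times\Delta$ lands in $\partial\wh{\Omega}$ in the limit (this is the sequence $a_{n}\searrow0$, $z_{n}\to 0$ with $f(0,z_{n})=a_{n}\abs{z_{n}}^{n}$).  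Frankel's theorem then gives a biholomorphism $\Omega\cong\wh{\Omega}$.  The resulting map $f:\Delta\times\Delta\to\wh{\Omega}$ is an honest isometry on the first factor but only a $(1,\log\sqrt{2})$-almost-geodesic on $\{0\}\times[0,1)$; this is why the paper needs the quantitatively more general Theorem~\ref{thm:prod_map} rather than Theorem~\ref{thm:prod_map_i}.
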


Here is the idea of the proof: if  $x$ had infinite type, then we could use a rescaling argument to construct a holomorphic map $f:\Delta \times \Delta \rightarrow \Omega$ having (essentially) the properties in the hypothesis of Theorem~\ref{thm:prod_map_i} which is impossible.

\section{Preliminaries}\label{sec:prelim}

\subsection{Notations}

\begin{enumerate}
\item For $z \in \Cb^d$ let $\norm{z}$ be the standard Euclidean norm and $d_{\Euc}(z_1, z_2) = \norm{z_1-z_2}$ be the standard Euclidean distance. 
\item Given an open set $\Omega \subset \Cb^d$, $p \in \Omega$, and $v \in \Cb^d \setminus \{0\}$ let 
\begin{align*}
\delta_{\Omega}(p)= \inf \{ d_{\Euc}(p,x) : x\in \partial \Omega \}
\end{align*}
and 
\begin{align*}
\delta_{\Omega}(p;v)= \inf \{ d_{\Euc}(p,x) : x\in \partial \Omega \cap (p+\Cb \cdot v) \}.
\end{align*}
\end{enumerate}

\subsection{The Kobayashi metric}

Given a domain $\Omega \subset \Cb^d$ the \emph{(infinitesimal) Kobayashi metric} is the pseudo-Finsler metric
\begin{align*}
k_{\Omega}(x;v) = \inf \left\{ \abs{\xi} : f \in \Hol(\Delta, \Omega), \ f(0) = x, \ d(f)_0(\xi) = v \right\}.
\end{align*}
By a result of Royden~\cite[Proposition 3]{R1971} the Kobayashi metric is an upper semicontinuous function on $\Omega \times \Cb^d$. In particular if $\sigma:[a,b] \rightarrow \Omega$ is an absolutely continuous curve (as a map $[a,b] \rightarrow \Cb^d$), then the function 
\begin{align*}
t \in [a,b] \rightarrow k_\Omega(\sigma(t); \sigma^\prime(t))
\end{align*}
is integrable and we can define the \emph{length of $\sigma$} to  be
\begin{align*}
\ell_\Omega(\sigma)= \int_a^b k_\Omega(\sigma(t); \sigma^\prime(t)) dt.
\end{align*}
One can then define the \emph{Kobayashi pseudo-distance} to be
\begin{multline*}
 K_\Omega(x,y) = \inf \left\{\ell_\Omega(\sigma) : \sigma\colon[a,b]
 \rightarrow \Omega \text{ is absolutely continuous}, \right. \\
 \left. \text{ with } \sigma(a)=x, \text{ and } \sigma(b)=y\right\}.
\end{multline*}
This definition is equivalent to the standard definition by a result of Venturini~\cite[Theorem 3.1]{V1989}.

A nice introduction to the Kobayashi metric and its properties can be found in~\cite{K2005} or~\cite{A1989}. 

One important property of the Kobayashi metric on a convex set is the following:

\begin{proposition}\cite{B1980}\label{prop:completeness}
Suppose $\Omega \subset \Cb^d$ is a convex domain Then the following are equivalent:
\begin{enumerate}
\item $(\Omega, K_\Omega)$ is a Cauchy complete geodesic metric space, 
\item $\Omega$ does not contain any complex affine lines.
\end{enumerate}
\end{proposition}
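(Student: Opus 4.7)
This direction is immediate: if $\Omega$ contains a complex affine line $L = \{p + tv : t \in \Cb\}$, then the inclusion $\iota : \Cb \to \Omega$, $t \mapsto p + tv$, is holomorphic. Since $K_\Cb \equiv 0$, the distance-decreasing property forces $K_\Omega \equiv 0$ on $L$, so $K_\Omega$ fails to be a metric and (1) fails.

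For $(2) \Rightarrow (1)$, I would prove nondegeneracy of $K_\Omega$ and Cauchy completeness separately; existence of geodesics then follows from the length-space Hopf-Rinow theorem, since $K_\Omega$ is a length metric by construction and nondegeneracy together with the standard comparison with the Euclidean metric on small scales renders $(\Omega, K_\Omega)$ locally compact. The key tool is the following lemma, proved by strict Hahn-Banach separation: if $\Omega$ is convex and contains no complex affine line, then for every $v \in \Cb^d \setminus \{0\}$ and every $p \in \Omega$ there exists a complex linear functional $\ell : \Cb^d \to \Cb$ with $\ell(v) \neq 0$ and $\ell(\Omega)$ contained in a half-plane of $\Cb$. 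Indeed, since the slice $\{t \in \Cb : p + tv \in \Omega\}$ is a proper open convex subset of $\Cb$, one can choose $t_0$ with $p + t_0 v \notin \overline{\Omega}$, apply strict real Hahn-Banach to obtain a real linear $L = \Re \ell$ with $L(p + t_0 v) > L(z)$ for all $z \in \Omega$, and observe that $\ell(v) = 0$ would force $L(p + t_0 v) = L(p)$, a contradiction.

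For nondegeneracy, given $p$ and $v \neq 0$ the lemma supplies $\ell$ with $\ell(v) \neq 0$ and $\ell(\Omega)$ contained in a half-plane $H$; composing with a biholomorphism $H \to \Delta$ yields a holomorphic $g : \Omega \to \Delta$ with $dg_p(v) \neq 0$, and the distance-decreasing property delivers $k_\Omega(p; v) \geq |dg_p(v)|/(1 - |g(p)|^2) > 0$, which integrates to nondegeneracy of $K_\Omega$. For completeness, it suffices to show that for fixed $p_0 \in \Omega$, any sequence $q_n \in \Omega$ leaving every compact subset of $\Omega$ satisfies $K_\Omega(p_0, q_n) \to \infty$. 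Along a subsequence, either $q_n \to x \in \partial \Omega$ in $\Cb^d$, in which case Hahn-Banach separation at $x$ and composition with $H \to \Delta$ gives a single $g : \Omega \to \Delta$ with $|g(q_n)| \to 1$; or $\|q_n\| \to \infty$, in which case iterating the lemma produces complex linear functionals $\ell_1, \dots, \ell_k$ with each $\ell_j(\Omega)$ in a half-plane and $\bigcap_j \ker \ell_j = \{0\}$, forcing $|\ell_j(q_n)| \to \infty$ for some $j$ and hence $|g_j(q_n)| \to 1$ for the associated disc map. In both cases the Schwarz-Pick lemma on $\Delta$ forces $K_\Omega(p_0, q_n) \to \infty$.

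The main obstacle will be the $\|q_n\| \to \infty$ case of completeness: unbounded convex examples such as a half-plane crossed with a bounded factor show that no single separating functional detects every direction of escape to infinity in $\Cb^d$, so the full strength of condition (2) --- the triviality of the complex recession cone --- must be leveraged to produce a finite family of separating functionals with trivial common kernel.
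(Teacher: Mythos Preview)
The paper does not prove this proposition; it is quoted from Barth~\cite{B1980} and stated without argument. So there is no proof in the paper to compare your proposal against.

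Your sketch is essentially correct and follows the standard route via separating complex-linear functionals. A few small points are worth tightening. In the key lemma you want $p+t_0 v\notin\overline{\Omega}$, not merely $\notin\Omega$; this is fine because for an open convex $\Omega$ containing no complex affine line, $\overline{\Omega}$ also contains no complex affine line (if $q+\Cb v\subset\overline{\Omega}$ then convexity with an interior point $p$ forces a parallel complex line through an interior point into $\Omega$). Alternatively, a supporting hyperplane at any boundary point of the slice already does the job. For the geodesic conclusion via Hopf--Rinow you invoke local compactness ``by comparison with the Euclidean metric''; to make this honest you need a uniform local lower bound $k_\Omega(p;v)\geq c\|v\|$, which does not follow from pointwise nondegeneracy alone. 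Your own iterated-functional construction supplies it: choose $\ell_1,\dots,\ell_d$ with $\bigcap_j\ker\ell_j=\{0\}$ and each $\ell_j(\Omega)$ in a half-plane $H_j$; then $k_\Omega(p;v)\geq k_{H_j}(\ell_j(p);\ell_j(v))=\abs{\ell_j(v)}/(2\,d_{\Euc}(\ell_j(p),\partial H_j))$, and taking the maximum over $j$ gives the uniform local bound since $(\ell_1,\dots,\ell_d)$ is a linear isomorphism. This same family of functionals also shows directly that the $K_\Omega$-topology agrees with the Euclidean one, which is another way to see local compactness. With these clarifications the argument goes through.
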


\subsection{The disk and the upper half plane}

For the disk and upper half plane the Kobayashi metric coincides with the Poincar{\'e} metric. 

Let $\Delta = \{ z \in \Cb : \abs{z} < 1\}$. Then 
\begin{align*}
k_{\Delta}(\zeta; v) = \frac{\abs{v}}{1-\abs{\zeta}^2}\end{align*}
and 
\begin{align*}
K_{\Delta}(\zeta_1,\zeta_2) = \arctanh \abs{\frac{\zeta_1-\zeta_2}{1-\zeta_1\overline{\zeta_2}}}.
\end{align*}
Next let $\Hc = \{ z \in \Cb : \Imaginary(z) >0\}$. Then 
\begin{align*}
k_{\Hc}(\zeta; v) = \frac{\abs{v}}{2\Imaginary(\zeta)}
\end{align*}
and 
\begin{align*}
K_{\Hc}(\zeta_1,\zeta_2) = \frac{1}{2} \arcosh \left( 1+ \frac{\abs{\zeta_1-\zeta_2}^2}{2\Imaginary(\zeta_1)\Imaginary(\zeta_2) } \right).
\end{align*}

\subsection{Almost geodesics}

In the proof of Theorem~\ref{thm:main} it will often be convenient to work with a class of curves which we call almost-geodesics:

\begin{definition}
Suppose $(X,d)$ is a metric space and $I \subset \Rb$ is an interval.
\begin{enumerate}
\item  A curve $\sigma: I \rightarrow X$ is a \emph{$(A,B)$-quasi-geodesic} if 
\begin{align*}
\frac{1}{A} \abs{t-s} - B \leq d(\sigma(s), \sigma(t)) \leq A\abs{t-s} + B
\end{align*}
for all $s,t \in I$. 
\item If $K \geq 1$ then a curve $\sigma: I \rightarrow X$ is an \emph{$K$-almost-geodesic} if $\sigma$ is an $(1,\log K)$-quasi-geodesic and 
\begin{align*}
d(\sigma(s), \sigma(t)) \leq K\abs{t-s} 
\end{align*}
for all $s,t \in I$. 
\end{enumerate}
\end{definition}

The main motivation for considering almost-geodesics is Proposition~\ref{prop:rough_geodesics} below which shows that inward pointing normal lines can be parametrized to be an almost-geodesics for convex domains with $C^{1,\alpha}$ boundary. We should also note that an $1$-almost-geodesic is a geodesic, thus motivating the choice of $\log K$ additive factor. 

\section{The Gromov product}

In this section we prove Theorem~\ref{thm:gromov_prod_i} which we restate:

\begin{theorem}\label{thm:gromov_prod}
Suppose $\Omega \subset \Cb^d$ is a bounded convex domain with $C^{1,\alpha}$ boundary and $p_n, q_m \in \Omega$ are sequences such that $p_n \rightarrow x \in \partial \Omega$ and $q_m \rightarrow y \in \partial \Omega$. 
\begin{enumerate}
\item If $x=y$, then 
\begin{align*}
\lim_{n,m \rightarrow \infty} ( p_n | q_m)_o= \infty.
\end{align*}
\item If 
\begin{align*}
\limsup_{n,m \rightarrow \infty} \ ( p_n | q_m)_o = \infty,
\end{align*}
then $T_{x}^{\Cb} \partial \Omega = T_{y}^{\Cb} \partial \Omega$.
\end{enumerate}
\end{theorem}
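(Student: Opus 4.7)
For Part~(1), the plan is to combine two standard estimates on the Kobayashi distance that hold on bounded convex domains with $C^{1,\alpha}$ boundary. The first is the classical Mercer-type lower bound
\begin{align*}
K_\Omega(p, o) \geq -\frac{1}{2}\log \delta_\Omega(p) - C
\end{align*}
for $p$ near $\partial\Omega$. The second is an upper bound for two points $p, q$ lying in a neighborhood of a common boundary point $x$ of the form
\begin{align*}
K_\Omega(p, q) \leq \max\left\{ -\tfrac{1}{2}\log \delta_\Omega(p), \ -\tfrac{1}{2}\log \delta_\Omega(q) \right\} + C'.
\end{align*}
I would prove this upper bound by an explicit path construction: travel from $p$ along the inward normal almost-geodesic $\sigma_x$ of Proposition~\ref{prop:rough_geodesics} up to a common height above $x$, cross transversely, and descend along $\sigma_x$ to $q$. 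The transverse segment has uniformly bounded Kobayashi length thanks to the $C^{1,\alpha}$ regularity of $\partial \Omega$, which rules out pathologically flat directions at the relevant scale. Plugging both estimates into the definition of the Gromov product yields $(p_n | q_m)_o \geq \tfrac{1}{2}\min\{-\tfrac{1}{2}\log \delta_\Omega(p_n), -\tfrac{1}{2}\log \delta_\Omega(q_m)\} - O(1)$, which diverges as $n,m \to \infty$, proving~(1).

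For Part~(2), I argue the contrapositive: assume $T_x^\Cb \partial \Omega \neq T_y^\Cb \partial \Omega$ and show $(p_n | q_m)_o$ is bounded. The uniqueness of the complex supporting hyperplane at a $C^1$ boundary point of a convex domain forces $y \notin T_x^\Cb \partial \Omega$ and $x \notin T_y^\Cb \partial \Omega$, so the complex line through $x$ and $y$ meets $\Omega$. Choose complex affine functionals $L_x, L_y$ vanishing on $T_x^\Cb\partial\Omega$ and $T_y^\Cb\partial\Omega$ respectively and positive on $\Omega$. The goal is to establish the sum-type lower bound
\begin{align*}
K_\Omega(p_n, q_m) \geq -\tfrac{1}{2}\log \delta_\Omega(p_n) - \tfrac{1}{2}\log \delta_\Omega(q_m) - C,
\end{align*}
which, combined with the complementary Mercer-type upper bounds $K_\Omega(p, o) \leq -\tfrac{1}{2}\log \delta_\Omega(p) + C$ for $p$ near $\partial\Omega$, forces $(p_n | q_m)_o = O(1)$.

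Establishing the sum-type lower bound is the main obstacle of the proof. A naive attempt—projecting $\Omega$ into $\Cb^{d-2} \times \Hc \times \Hc$ via $(L_x, L_y)$ and using the distance-decreasing property of holomorphic maps—only produces a max-type bound, because the Kobayashi metric on a product of half-planes is the maximum of the factor metrics; and $\Hc \times \Hc$ is not Gromov hyperbolic. The missing ``sum'' behavior reflects the fact that $\Omega$ is strictly smaller than the half-space product near $x$ and $y$: by the $C^{1,\alpha}$ regularity, $\partial \Omega$ pinches down to the single points $x$ and $y$ in transverse tangent directions, providing the extra rigidity that the product model lacks. I expect this extra estimate to be extracted either by circumscribing an appropriate ellipsoid containing $\Omega$ and tangent to $\partial \Omega$ at both $x$ and $y$—on which the Kobayashi metric has an explicit complex hyperbolic formula delivering the sum-type bound—or by a direct integration of $k_\Omega$ along any curve from $p_n$ to $q_m$ that exploits the joint normal behavior of $L_x, L_y$ together with the boundary smoothness. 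Once the sum-type bound is in hand, Part~(2) follows by plugging into the Gromov product.
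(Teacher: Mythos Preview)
Your approach to Part~(1) contains a genuine error: the max-type upper bound
\begin{align*}
K_\Omega(p, q) \leq \max\left\{ -\tfrac{1}{2}\log \delta_\Omega(p), \ -\tfrac{1}{2}\log \delta_\Omega(q) \right\} + C'
\end{align*}
with a uniform $C'$ is false even on the unit disk. Take $p_a = 1-a$ and $q_a = (1-a)e^{ia^{1/4}}$: both converge to $1$ as $a\to 0$ and both have $\delta_\Delta = a$, yet one computes $K_\Delta(p_a,q_a) = -\tfrac{3}{4}\log a + O(1)$, which exceeds the max bound $-\tfrac{1}{2}\log a$ by an unbounded amount. Consequently your asserted conclusion $(p_n|q_m)_o \geq \tfrac{1}{2}\min\{-\tfrac{1}{2}\log\delta_\Omega(p_n),-\tfrac{1}{2}\log\delta_\Omega(q_m)\} - O(1)$ is also false. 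Your path construction cannot repair this: at a \emph{fixed} height it yields only the sum bound $K_\Omega(p,q) \leq -\tfrac{1}{2}\log\delta_\Omega(p) - \tfrac{1}{2}\log\delta_\Omega(q) + C$, which is too weak to make the Gromov product diverge, while at a \emph{variable} height the crossing segment is not uniformly bounded (the disk example again). The paper's argument is a two-step limit that bypasses any uniform max bound: writing $p_n = \sigma_{x_n}(s_n)$ and $q_m = \sigma_{y_m}(t_m)$ along normals at the nearest boundary points, one has for each fixed $T$ that the crossing distance $K_\Omega(\sigma_{x_n}(T),\sigma_{y_m}(T)) \to 0$ as $n,m\to\infty$, simply because the footpoints $x_n,y_m$ both converge to $x$; this gives $\liminf_{n,m}(p_n|q_m)_o \geq T - O(1)$, and then one sends $T\to\infty$.

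For Part~(2) you correctly identify the sum-type lower bound as the crux and correctly observe that projecting to $\Hc\times\Hc$ via $(L_x,L_y)$ gives only the max. But neither of your proposed fixes is carried out: you give no construction of an enclosing ellipsoid tangent to $\partial\Omega$ at both $x$ and $y$ (and none is obvious for a general $C^{1,\alpha}$ convex body), and the direct-integration idea is left undeveloped. The paper's argument avoids projecting to a single model altogether. It shows first that any Kobayashi geodesic $\sigma$ from $p$ to $q$ must pass through a point $\sigma(S)$ lying a definite Euclidean distance $r>0$ from both $T_{x_p}^{\Cb}\partial\Omega$ and $T_{y_q}^{\Cb}\partial\Omega$ (here $x_p,y_q$ are nearest boundary points to $p,q$; the geometric input is that the $\overline{\Omega}$-traces of complex tangent hyperplanes at points near $x$ and at points near $y$ are uniformly separated, which uses only convexity and $C^1$). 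Then additivity along the geodesic, together with your own half-plane projections applied \emph{separately to each leg}, gives
\begin{align*}
K_\Omega(p,q) = K_\Omega(p,\sigma(S)) + K_\Omega(\sigma(S),q) \geq \tfrac{1}{2}\log\tfrac{r}{\delta_\Omega(p)} + \tfrac{1}{2}\log\tfrac{r}{\delta_\Omega(q)}.
\end{align*}
Splitting the geodesic at a point uniformly far from both supporting hyperplanes is the missing idea.
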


 We begin by proving a series of lemmas. 

\begin{lemma}\label{lem:hyperplanes}
Suppose $\Omega \subset \Cb^d$ is a convex domain and $H \subset \Cb^d$ is a complex hyperplane such that $H \cap \Omega = \emptyset$. Then for any $z_1, z_2 \in \Omega$ we have 
\begin{align*}
K_{\Omega}(z_1, z_2) \geq \frac{1}{2}\abs{ \log \frac{ d_{\Euc}(H,z_1)}{d_{\Euc}(H,z_2)} }.
\end{align*}
\end{lemma}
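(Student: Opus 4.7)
The plan is to reduce the inequality to a one-variable Kobayashi estimate by projecting $\Omega$ orthogonally onto a complex line perpendicular to $H$.

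First I would choose a $\Cb$-affine map $\pi \colon \Cb^d \to \Cb$ realizing the distance to $H$: concretely, take a unit complex normal to $H$ and let $\pi$ be the corresponding orthogonal projection, normalized so that $\pi(H) = \{0\}$ and $\abs{\pi(z)} = d_{\Euc}(z,H)$ for all $z \in \Cb^d$. Since $H \cap \Omega = \emptyset$, the restriction $\pi|_\Omega$ is a holomorphic map from $\Omega$ into $\Cb \setminus \{0\}$. The image $\pi(\Omega)$ is a convex open subset of $\Cb$ avoiding the origin, so by an elementary planar separation argument it lies in an open half-plane through $0$; post-composing $\pi$ with a rotation of $\Cb$ (which preserves $\abs{\cdot}$), I may assume
\begin{align*}
\pi(\Omega) \subset U := \{w \in \Cb : \Real w > 0\}.
\end{align*}

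The distance-decreasing property of the Kobayashi pseudo-distance under the holomorphic map $\pi \colon \Omega \to U$ then yields
\begin{align*}
K_\Omega(z_1, z_2) \geq K_U(\pi(z_1), \pi(z_2)),
\end{align*}
and the problem reduces to the one-variable bound
\begin{align*}
K_U(w_1, w_2) \geq \tfrac{1}{2} \abs{\log(\abs{w_1}/\abs{w_2})}, \qquad w_1, w_2 \in U.
\end{align*}

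To establish this, I would apply the biholomorphism $w \mapsto iw$ from $U$ onto $\Hc$, under which $\zeta := iw$ satisfies $\abs{\zeta} = \abs{w}$ and $\Imag \zeta = \Real w$. Combining the explicit formula
\begin{align*}
K_\Hc(\zeta_1, \zeta_2) = \tfrac{1}{2}\arcosh\Bigl(1 + \frac{\abs{\zeta_1 - \zeta_2}^2}{2\Imag(\zeta_1) \Imag(\zeta_2)}\Bigr)
\end{align*}
with the identity $\cosh(\log(\abs{\zeta_1}/\abs{\zeta_2})) = (\abs{\zeta_1}^2 + \abs{\zeta_2}^2)/(2\abs{\zeta_1}\abs{\zeta_2})$, the desired bound becomes equivalent to
\begin{align*}
\frac{\abs{\zeta_1 - \zeta_2}^2}{2\Imag(\zeta_1)\Imag(\zeta_2)} \geq \frac{(\abs{\zeta_1} - \abs{\zeta_2})^2}{2\abs{\zeta_1}\abs{\zeta_2}},
\end{align*}
which is immediate from the reverse triangle inequality $\abs{\zeta_1 - \zeta_2} \geq \abs{\abs{\zeta_1} - \abs{\zeta_2}}$ together with $\Imag \zeta_j \leq \abs{\zeta_j}$ for $\zeta_j \in \Hc$.

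The argument is short and there is no serious obstacle: the only subtlety is the planar separation used in the first paragraph to place $\pi(\Omega)$ inside $U$, while the algebraic manipulation with the $\Hc$-formula is the technical heart of the proof but entirely routine.
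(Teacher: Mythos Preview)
Your proof is correct and follows essentially the same approach as the paper: both project onto the complex normal direction to $H$, use convexity to land in a half-plane, and then apply the same elementary estimate for $K_{\Hc}$ via $\abs{\zeta_1-\zeta_2}\geq \bigl|\abs{\zeta_1}-\abs{\zeta_2}\bigr|$ and $\Imag\zeta_j\leq\abs{\zeta_j}$. The only cosmetic difference is that the paper performs the separation in $\Cb^d$ (finding a real hyperplane $H_{\Rb}\supset H$ disjoint from $\Omega$ and then rotating), whereas you project first and separate the convex image from $0$ in the plane; these are equivalent.
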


\begin{proof} Since $\Omega$ is convex, there exists a real hyperplane $H_{\Rb}$ so that $H \subset H_{\Rb}$ and $H_{\Rb} \cap \Omega = \emptyset$. By translating and rotating $\Omega$, we may assume that 
\begin{align*}
H_{\Rb} = \{ (z_1, \dots, z_d) \in \Cb^d : \Imaginary(z_1) =0\}
\end{align*}
and 
\begin{align*}
\Omega \subset \{ (z_1, \dots, z_d) \in \Cb^d : \Imaginary(z_1) >0\}.
\end{align*}
Consider the projection $P:\Cb^d \rightarrow \Cb$ given by $P(z_1, \dots, z_d) = z_1$. Then 
\begin{align*}
P(\Omega) \subset \Hc = \{ w \in \Cb : \Imaginary(w) > 0\}
\end{align*}
and so
\begin{align*}
K_{\Omega}(z_1,z_2) \geq K_{P(\Omega)}(P(z_1),P(z_2)) \geq K_{\Hc}(P(z_1), P(z_2)).
\end{align*}
Now for $w_1, w_2 \in \Hc$ we have 
\begin{align*}
K_{\Hc}(w_1, w_2)  
&= \frac{1}{2} \arcosh \left( 1+ \frac{ \abs{w_1-w_2}^2 }{2\Imaginary(w_1)\Imaginary(w_2)} \right)\\
 & \geq  \frac{1}{2} \arcosh \left( 1+ \frac{ (\abs{w_1}-\abs{w_2})^2  }{2\abs{w_1}\abs{w_2}} \right) =  \frac{1}{2} \arcosh \left( \frac{\abs{w_1}}{2\abs{w_2}} +\frac{\abs{w_2}}{2\abs{w_1}} \right) \\
 & = \frac{1}{2} \abs{\log \left( \frac{\abs{w_1}}{\abs{w_2}} \right) }.
 \end{align*}
So 
\begin{align*}
K_{\Omega}(z_1,z_2) \geq \frac{1}{2}\abs{ \log \frac{\abs{P(z_1)}}{\abs{P(z_2)}}}.
\end{align*}
Since $\abs{P(z_i)} = d_{\Euc}(H,z_i)$ this implies the lemma. 
\end{proof}

Suppose $\Omega$ is a domain with $C^1$ boundary. If $x \in \partial \Omega$ let $n_x$ be the inward pointing normal unit vector at $x$. 

\begin{proposition}\label{prop:rough_geodesics}
Suppose $\Omega \subset \Cb^d$ is a bounded convex domain with $C^{1,\alpha}$ boundary. Then there exists $\epsilon>0$ and $K \geq 1$ so that if $x \in \partial \Omega$ then the curve $\sigma_x: \Rb_{\geq 0} \rightarrow \Omega$ given by
\begin{align*}
\sigma_x(t)=x+\epsilon e^{-2t}n_x
\end{align*}
is an $K$-almost-geodesic. 
\end{proposition}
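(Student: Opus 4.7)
The plan is to prove the two inequalities defining a $K$-almost-geodesic separately, with the easy lower bound following from Lemma~\ref{lem:hyperplanes} and the upper bound requiring a sharp estimate of the infinitesimal Kobayashi metric along the normal ray. For the lower bound, take $H := T_x^{\Cb}\partial\Omega$: since $T_x\partial\Omega$ is a supporting real hyperplane of the convex domain $\Omega$ and $H \subset T_x\partial\Omega$, we have $H \cap \Omega = \emptyset$, so Lemma~\ref{lem:hyperplanes} applies. Because $n_x$ is Euclidean-perpendicular to $T_x\partial\Omega$ and hence to the subspace $T_x^{\Cb}\partial\Omega \subset T_x\partial\Omega$, one computes $d_{\Euc}(\sigma_x(t),H) = \epsilon e^{-2t}$, and the lemma yields $K_\Omega(\sigma_x(s),\sigma_x(t)) \geq \tfrac{1}{2}\abs{\log(e^{-2s}/e^{-2t})} = \abs{s-t}$, the required lower bound (even without any additive constant).

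For the upper bound I would use the length inequality $K_\Omega(\sigma_x(s),\sigma_x(t)) \leq \int_s^t k_\Omega(\sigma_x(u);\sigma_x'(u))\, du$ and estimate the integrand pointwise. After translating $x$ to the origin and rotating so that $n_x = ie_1$, the $C^{1,\alpha}$ hypothesis writes $\partial\Omega$ locally as the graph $\Imag(z_1) = g(\Real(z_1),z_2,\dots,z_d)$ of a nonnegative convex function with $g(0)=0$, $\nabla g(0)=0$, and $g(w) \leq C\norm{w}^{1+\alpha}$. Hence the complex normal slice $D := (x + \Cb n_x)\cap \Omega$ is locally the convex planar domain $\{z_1 : \Imag(z_1) > h(\Real(z_1))\}$ with $0 \leq h(s) \leq C\abs{s}^{1+\alpha}$, and the rescaling $z \mapsto z/r$ with $r := \epsilon e^{-2u}$ makes $D/r$ converge to the upper half-plane $\Hc$ locally, at quantitative rate $O(r^\alpha)$.

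Using this near-half-plane structure, I would construct a holomorphic map $\phi \colon \Hc \to D$ with $\phi(i) = \sigma_x(u)$ and $\abs{\phi'(i)} \geq r(1 - O(r^\alpha))$ aligned with $in_x$ --- morally, the scaling $z \mapsto rz$ composed with a small holomorphic correction of order $r^{1+\alpha}$ that bends the image into $D$, together with the $\Hc$-automorphism fixing $i$ that rotates the tangent direction appropriately. Using $k_{\Hc}(i;1) = 1/2$ and the distance-decreasing property of the inclusion $D \hookrightarrow \Omega$ then gives $k_\Omega(\sigma_x(u); n_x) \leq k_D(\sigma_x(u); n_x) \leq (2r)^{-1}(1 + O(r^\alpha))$, and multiplying by $\abs{\sigma_x'(u)} = 2r$ yields the sharp integrand bound $k_\Omega(\sigma_x(u); \sigma_x'(u)) \leq 1 + O(e^{-2\alpha u})$. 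Integrating produces the quasi-geodesic upper bound $K_\Omega(\sigma_x(s),\sigma_x(t)) \leq \abs{s-t} + \log K$ for $\log K$ of order $1/(2\alpha)$, while the coarser uniform estimate $k_\Omega(\sigma_x(u); \sigma_x'(u)) \leq 2$ from the inscribed-disk Schwarz lemma supplies the Lipschitz inequality $K_\Omega(\sigma_x(s),\sigma_x(t)) \leq K\abs{s-t}$ after enlarging $K$ if necessary.

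The main obstacle is obtaining the sharp constant $1$ in the integrand bound: the inscribed-disk Schwarz comparison only yields $k_\Omega(\sigma_x(u); \sigma_x'(u)) \leq 2$, producing a $(2,0)$-quasi-geodesic but not a $(1,\log K)$-quasi-geodesic. Bridging this factor-of-two gap requires genuine use of the near-half-plane geometry of $D$, with the $C^{1,\alpha}$ rate of tangent-plane approximation providing the needed quantitative control on $\abs{\phi'(i)}$. Uniformity of all constants in $x \in \partial\Omega$, and the choice of $\epsilon$ small enough that $\sigma_x([0,\infty)) \subset \Omega$ for every $x$, follow from compactness of $\partial\Omega$ together with uniform $C^{1,\alpha}$ bounds on the local defining function.
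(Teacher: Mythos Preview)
Your lower bound via Lemma~\ref{lem:hyperplanes} and your Lipschitz bound via the inscribed complex disk match the paper's argument exactly. The difference lies in the crucial $(1,\log K)$ upper bound.

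The paper does \emph{not} attempt a pointwise estimate of the infinitesimal metric. Instead it fixes, once and for all, a reference planar domain $\Dc_0 = \{w : \abs{w}<\rho,\ C\abs{\Imag w}^{1+\alpha} < \Real w\}$ chosen so that $x + \Dc_0 \cdot n_x \subset \Omega$ for every $x\in\partial\Omega$ (this is where the uniform $C^{1,\alpha}$ bound enters). It then smooths $\Dc_0$ to a symmetric $C^{1,\alpha}$ subdomain $\Dc$, takes the Riemann map $\varphi:\Dc\to\Delta$ sending $\Dc\cap\Rb$ to $(-1,1)$, and invokes the Kellogg--Warschawski theorem to get a $C^1$ extension of $\varphi$ to $\overline{\Dc}$. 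This yields $1 - \kappa t \leq \varphi(t) \leq 1 - t/\kappa$ for $t\in[0,\epsilon]$, and then the explicit hyperbolic distance on $\Delta$ gives directly $K_{\Dc_0}(a,b) \leq \tfrac12\log(b/a) + \text{const}$, hence the additive upper bound after the substitution $a=\epsilon e^{-2t}$, $b=\epsilon e^{-2s}$.

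Your route --- estimating $k_\Omega(\sigma_x(u);\sigma_x'(u)) \leq 1 + O(e^{-2\alpha u})$ and integrating --- would yield the same conclusion, and the claimed infinitesimal bound is in fact true. The weak point is your proposed mechanism for proving it. The map $z\mapsto rz$ sends $\Hc$ to $\Hc$, not into the cusp $D\subset\Hc$, and there is no evident ``small holomorphic correction of order $r^{1+\alpha}$'' that bends $\Hc$ into $D$: a holomorphic perturbation cannot simply push the boundary inward along the real axis while leaving the interior point $ir$ and its derivative essentially fixed. The only clean way to produce a competitor map $\Hc\to D$ with $\abs{\phi'(i)} \geq r(1-o(1))$ is via the Riemann map of $D$ (or of a fixed model cusp inside it) together with its boundary regularity --- which is exactly the paper's argument. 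So your strategy is not wrong, but the step you flag as ``the main obstacle'' is a genuine one, and the natural resolution collapses back to the paper's method.
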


\begin{remark}The most difficult inequality to establish in the above proposition is the upper bound 
\begin{align*}
K_{\Omega}(\sigma_x(t), \sigma_x(s)) \leq \abs{t-s} + \log(K).
\end{align*}
To show this we will closely follow the proof of Proposition 2.5 in~\cite{FR1987}.\end{remark}

\begin{proof}
For $C, \rho > 0$ let 
\begin{align*}
\Dc_0 := \{ w \in \Cb : \abs{w} < \rho \text{ and } C\abs{\Imaginary(w)}^{1+\alpha} < \Real(w) \}.
\end{align*}
For $x \in \partial \Omega$ let $\phi_x: \Dc_0 \rightarrow \Cb^d$ be the map 
\begin{align*}
\phi_x(w) = x+ wn_x.
\end{align*}
Since $\partial \Omega$ is $C^{1,\alpha}$ we can pick $\rho, C >0$ so that $\phi_x(\Dc_0) \subset \Omega$ for all $x \in \partial \Omega$. 

Next let $\Dc \subset \Dc_0$ be a domain with $C^{1,\alpha}$ boundary, $0 \in \partial \Dc$, and symmetric about the real axis. Such a domain can be obtained by smoothing $\Dc_0$ near the two corner points. Now since $\Dc$ is symmetric about the real axis there exists a biholomorphic map $\varphi: \Dc \rightarrow \Delta$ with $\varphi(\Rb \cap \Dc) = \Rb \cap \Delta$ and 
\begin{align*}
\lim_{s \rightarrow 0} \varphi(s)=1.
\end{align*}
Since $\Dc$ has $C^{1,\alpha}$ boundary, $\varphi$ extends to a diffeomorphism $\overline{\Dc} \rightarrow \overline{\Delta}$ (see for instance~\cite[page 426 Theorem 6]{G1969}). 

Now fix $\epsilon >0$ and $\kappa \geq 1$ so that 
\begin{align*}
0 \leq 1- \kappa t \leq \varphi(t) \leq 1 - \frac{1}{\kappa}t
\end{align*}
for $t \in [0 , \epsilon]$.

Then for $0 < a < b \leq \epsilon$ we have
 \begin{align*}
 K_{\Dc_0}(a,b)
 & \leq K_{\Dc}(a,b) = K_\Delta( \varphi(a), \varphi(b)) = \frac{1}{2} \log \frac{(1 + \varphi(a))(1-\varphi(b))}{(1-\varphi(a))(1+\varphi(b))} \\
 & \leq \frac{1}{2} \log(2) + \frac{1}{2} \log \frac{ 1- \varphi(b)}{1- \varphi(a)} \\
 & \leq \frac{1}{2} \log(2) + \log(\kappa) + \frac{1}{2} \log (b/a).
 \end{align*}
 Thus if $\sigma_x(t) = x + \epsilon e^{-2t} n_x$ we have:
 \begin{align*}
K_{\Omega}(\sigma_x(t), \sigma_x(s)) \leq K_{\Dc_0}(e^{-2t}\epsilon, e^{-2s}\epsilon) \leq \frac{1}{2} \log(2) + \log(\kappa) + \abs{t-s}.
\end{align*}

On the other hand, 
\begin{align*}
K_{\Omega}(\sigma_x(t),\sigma_x(s)) \geq \frac{1}{2}\abs{ \log \frac{ d_{\Euc}(T_x^{\Cb} \partial \Omega ,\sigma_x(t))}{d_{\Euc}(T_x^{\Cb} \partial \Omega ,\sigma_x(s))} } = \abs{t-s}.
\end{align*}
Thus for any $x \in \partial \Omega$ the curve $\sigma_x$ is a $(1, \log \sqrt{2} \kappa )$-quasi-geodesic. 

Now since $\partial \Omega$ is $C^1$, by possibly decreasing $\epsilon > 0$ we can assume that 
\begin{align*}
\left\{ x + w n_x : \abs{w} \leq 2\epsilon \text{ and }\frac{1}{2}\abs{\Imaginary(w)} \leq \Real(w)\right\}  \subset \Omega
\end{align*}
for all $x \in \partial\Omega$. This implies that there exists a $C > 0$ so that 
\begin{align*}
\delta_{\Omega}(\sigma_x(t); \sigma_x^\prime(t)) \geq C\epsilon e^{-2t}
\end{align*}
for all $x \in \partial \Omega$ and $t \geq 0$. Then 
\begin{align*}
k_{\Omega}(\sigma_x(t); \sigma_x^\prime(t)) \leq \frac{\norm{\sigma_x^\prime(t)}}{\delta_{\Omega}(\sigma_x(t); \sigma_x^\prime(t))} = \frac{ 2\epsilon e^{-2t}}{\delta_{\Omega}(\sigma_x(t); \sigma_x^\prime(t))} \leq \frac{2}{C}
\end{align*}
and so 
\begin{align*}
K_{\Omega}(\sigma_x(t),\sigma_x(s)) \leq \frac{2}{C}\abs{t-s}.
\end{align*}
Thus $\sigma_x$ is a $(2/C, 0)$-quasi-geodesic. 

Thus for all $x \in \partial \Omega$ the curve $\sigma_x$ is an $K$-almost-geodesic with $K = \max\{ \sqrt{2} \kappa, 2/C\}$.
\end{proof}

\begin{lemma}\label{lem:dist} Suppose $\Omega$ is a bounded convex domain with $C^1$ boundary, $x, y \in \partial \Omega$, and $T_{x}^{\Cb} \partial \Omega \neq T_{y}^{\Cb} \partial \Omega$. Then there exists $\epsilon >0$ and $C \geq 0$ such that 
\begin{align*}
K_{\Omega}(p,q) \geq \frac{1}{2} \log \frac{1}{\delta_{\Omega}(p)} + \frac{1}{2} \log \frac{1}{\delta_{\Omega}(q)} - C
\end{align*}
when $p,q \in \Omega$, $d_{\Euc}(p, T_{x}^{\Cb} \partial \Omega) \leq \epsilon$, and $d_{\Euc}(q, T_{y}^{\Cb} \partial \Omega) \leq \epsilon$.
\end{lemma}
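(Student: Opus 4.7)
The plan is to obtain a lower bound on the Kobayashi length $\ell_\Omega(\gamma)$ of an arbitrary absolutely continuous curve $\gamma\colon[0,T]\to\Omega$ from $p$ to $q$, by splitting $\gamma$ at a suitable intermediate point and applying Lemma~\ref{lem:hyperplanes} to the two pieces with different complex supporting hyperplanes; taking the infimum over $\gamma$ will then yield the lower bound on $K_\Omega(p,q)$. I will assume $\delta_\Omega(p)$ and $\delta_\Omega(q)$ are both small, since otherwise the inequality follows immediately from $K_\Omega \geq 0$ for $C$ large enough. Let $x_p \in \partial\Omega$ be the unique closest boundary point to $p$, let $x_q$ be that for $q$, and set $H_p := T_{x_p}^{\Cb}\partial\Omega$ and $H_q := T_{x_q}^{\Cb}\partial\Omega$. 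Since $\Omega$ is convex and $\partial\Omega$ is $C^1$, the segment from $p$ to $x_p$ is normal to $H_p$, so $d_{\Euc}(p, H_p) = \delta_\Omega(p)$; analogously $d_{\Euc}(q, H_q) = \delta_\Omega(q)$.

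The main step will be a uniform separation claim: there exist $\epsilon_0 > 0$ and $c > 0$ such that whenever $p, q \in \Omega$ satisfy
\begin{align*}
\max\left\{ d_{\Euc}(p, T_x^{\Cb}\partial\Omega), \; d_{\Euc}(q, T_y^{\Cb}\partial\Omega), \; \delta_\Omega(p), \; \delta_\Omega(q) \right\} \leq \epsilon_0,
\end{align*}
then $d_{\Euc}(z, H_p) + d_{\Euc}(z, H_q) \geq c$ for every $z \in \overline{\Omega}$. I would prove this by contradiction: failure produces sequences $p_n, q_n \in \Omega$ and $z_n \in \overline{\Omega}$ along which all the quantities above tend to zero. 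Compactness and continuity of the tangent hyperplane assignment on a $C^1$ boundary give $x_{p_n} \to p_\infty \in H_x \cap \partial\Omega$ and $H_{p_n} \to T_{p_\infty}^{\Cb}\partial\Omega = H_x$ (by uniqueness of the supporting complex hyperplane at a $C^1$ boundary point), and analogously $H_{q_n} \to H_y$. Hence the limit $z_\infty$ lies in $H_x \cap H_y \cap \overline{\Omega}$; but this intersection is empty, for any such $z_\infty$ would admit $H_x$ and $H_y$ as simultaneous supporting complex hyperplanes at a $C^1$ boundary point, forcing $H_x = H_y$ and contradicting the hypothesis.

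With the separation claim in hand, shrink $\epsilon_0$ further so that $\epsilon_0 < c/4$. Then for any curve $\gamma$ from $p$ to $q$, the continuous function $t \mapsto d_{\Euc}(\gamma(t), H_p)$ takes the value $\delta_\Omega(p) < c/4$ at $t=0$ and is at least $c - \delta_\Omega(q) \geq c/2$ at $t=T$ (applying the separation claim at $z=q$), so by the intermediate value theorem there exists $t_* \in (0,T)$ with $d_{\Euc}(\gamma(t_*), H_p) = c/4$; the separation claim at $z = \gamma(t_*)$ then gives $d_{\Euc}(\gamma(t_*), H_q) \geq 3c/4$. Applying Lemma~\ref{lem:hyperplanes} to the pieces $\gamma|_{[0, t_*]}$ and $\gamma|_{[t_*, T]}$ with $H_p$ and $H_q$ respectively:
\begin{align*}
\ell_\Omega(\gamma) \; \geq \; K_\Omega(p, \gamma(t_*)) + K_\Omega(\gamma(t_*), q) \; \geq \; \tfrac{1}{2}\log\tfrac{c/4}{\delta_\Omega(p)} + \tfrac{1}{2}\log\tfrac{c/4}{\delta_\Omega(q)}.
\end{align*}
Infimizing over $\gamma$ and absorbing $\log(4/c)$ into $C$ gives the lemma.

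The hard part will be the uniform separation claim: it is here that the hypotheses $T_x^{\Cb}\partial\Omega \neq T_y^{\Cb}\partial\Omega$ and $\partial\Omega \in C^1$ enter essentially, via the uniqueness of the supporting complex hyperplane at any $C^1$ boundary point. Once separation is in place, the rest is a routine application of Lemma~\ref{lem:hyperplanes} and the intermediate value theorem.
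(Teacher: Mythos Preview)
Your proof is correct and follows essentially the same strategy as the paper: establish a uniform separation between the complex tangent hyperplanes $H_p=T^{\Cb}_{x_p}\partial\Omega$ and $H_q=T^{\Cb}_{x_q}\partial\Omega$ via a compactness--contradiction argument using continuity of the Gauss map, then locate an intermediate point on a curve (the paper uses a geodesic) that is boundedly far from both hyperplanes and apply Lemma~\ref{lem:hyperplanes} to each piece. Your formulation of the separation as ``$d_{\Euc}(z,H_p)+d_{\Euc}(z,H_q)\ge c$ for all $z\in\overline{\Omega}$'' is a slightly cleaner packaging of what the paper does with its auxiliary sets $X(\delta)$ and $Y(\delta)$, but the content is the same.

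One small correction: you refer to ``the unique closest boundary point $x_p$,'' but for a merely $C^1$ convex boundary the nearest-point projection from the interior need not be single-valued, even arbitrarily close to $\partial\Omega$ (the paper explicitly notes this when it introduces the set $\Pi(p)$). Nothing in your argument actually uses uniqueness---choosing any $x_p\in\Pi(p)$ works---so this is only a wording issue. Also note that your preliminary reduction to small $\delta_\Omega(p),\delta_\Omega(q)$ is automatic: since $T^{\Cb}_x\partial\Omega$ is disjoint from $\Omega$, the condition $d_{\Euc}(p,T^{\Cb}_x\partial\Omega)\le\epsilon$ already forces $\delta_\Omega(p)\le\epsilon$.
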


\begin{remark} Abate~\cite[Proposition 2.4.24, Corollary 2.4.25]{A1989} proved a weaker version of the above lemma assuming that $\Omega$ has $C^2$ boundary and $T_x \partial \Omega \neq T_y \partial \Omega$. 
\end{remark}

\begin{proof}
For a set $A \subset \Cb^d$ and $\delta \geq0$ let 
\begin{align*}
\Nc_{\delta}(A) = \{ z \in \Cb^d : d_{\Euc}(z,A) \leq \delta\}.
\end{align*}
For a point $p \in \partial \Omega$ let $\Pi(p):=\{p\}$ and for  a point $p \in \Omega$ let 
\begin{align*}
\Pi(p) : = \{ x \in \partial \Omega : \delta_{\Omega}(p) =d_{\Euc}(p,x)\}.
\end{align*}
Since $\partial \Omega$ is only $C^1$, we may have $\abs{\Pi(p)} > 1$ for $p \in \Omega$ arbitrarily close to $\partial \Omega$. 

Next for $\delta >0$ let
\begin{align*}
X(\delta) : =\overline{ \Omega} \cap \left( \cup \left\{ \Nc_{\delta}(T_{x_p}^{\Cb} \partial \Omega) : p \in \overline{\Omega} \cap \Nc_{\delta}(T_x^{\Cb} \partial \Omega) \text{ and } x_p \in \Pi(p) \right\}\right)
\end{align*}
and 
\begin{align*}
Y(\delta) : =\overline{\Omega} \cap \left( \cup \left\{ \Nc_{\delta}(T_{y_q}^{\Cb} \partial \Omega) : q \in \overline{\Omega} \cap \Nc_{\delta}(T_y^{\Cb} \partial \Omega) \text{ and } y_q \in \Pi(q) \right\}\right).
\end{align*}
Notice that $X(\delta)$ and $Y(\delta)$ are compact. 

We claim that there exists $\delta >0$ so that $X(\delta) \cap Y(\delta) = \emptyset$. Suppose not, then for each $n \in \Nb$ there exists $p_n \in \overline{ \Omega} \cap \Nc_{1/n}(T_x^{\Cb} \partial \Omega)$, $x_n \in \Pi(p_n)$, $q_n \in  \overline{\Omega} \cap \Nc_{1/n}(T_y^{\Cb} \partial \Omega)$, and $y_n \in \Pi(q_n)$ so that 
\begin{align*}
\overline{ \Omega} \cap \Nc_{1/n}(T_{x_n}^{\Cb} \partial \Omega) \cap \Nc_{1/n}(T_{y_n}^{\Cb} \partial \Omega) \neq \emptyset.
\end{align*}
Fix 
\begin{align*}
z_n \in \overline{ \Omega} \cap \Nc_{1/n}(T_{x_n}^{\Cb} \partial \Omega) \cap \Nc_{1/n}(T_{y_n}^{\Cb} \partial \Omega)
\end{align*}
and pass to a subsequence so that $x_n \rightarrow x^\prime$, $y_n \rightarrow y^\prime$, and $z_n \rightarrow z$. By construction $x^\prime \in T_x^{\Cb} \partial \Omega$ and $y^\prime \in T_{y}^{\Cb} \partial \Omega$. So $T_{x^\prime}^{\Cb} \partial \Omega = T_x^{\Cb} \partial \Omega$ and $T_{y^\prime}^{\Cb} \partial \Omega = T_y^{\Cb} \partial \Omega$. Thus 
\begin{align*}
z \in \overline{\Omega} \cap T_x^{\Cb} \partial \Omega \cap T_y^{\Cb} \partial \Omega
\end{align*}
which contradicts the fact that $\Omega$ is convex, $\partial \Omega$ is $C^1$, and $T_x^{\Cb} \partial \Omega \neq T_y^{\Cb} \partial \Omega$. So we can pick $\delta > 0$ so that $X(\delta) \cap Y(\delta) = \emptyset$.

Now since $X(\delta)$ and $Y(\delta)$ are compact there exists $r > 0$ so that 
\begin{align*}
\Nc_{r}(X(\delta)) \cap \Nc_{r}(Y(\delta)) = \emptyset.
\end{align*}

Now let $\epsilon = \min\{ \delta, r\}$. Suppose that $p \in \Omega \cap \Nc_{\epsilon}(T_x^{\Cb} \partial \Omega)$ and $q \in \Omega \cap \Nc_{\epsilon}(T_y^{\Cb} \partial \Omega)$. Then $p \in \Nc_{r}(X(\delta))$ and $q \in \Nc_{r}(Y(\delta))$. Moreover, if we pick $x_p \in \Pi(p)$ and $y_q \in \Pi(q)$ then 
\begin{align*}
 \Omega \cap \Nc_r(T^{\Cb}_{x_p}\partial \Omega) \subset X(\delta)
\end{align*}
 and 
 \begin{align*}
 \Omega \cap \Nc_r(T^{\Cb}_{y_q}\partial \Omega) \subset Y(\delta).
\end{align*}
Now let $\sigma:[0,T] \rightarrow \Omega$ be a geodesic with $\sigma(0)=p$ and $\sigma(T) = q$. Since 
\begin{align*}
\Nc_{r}(X(\delta)) \cap \Nc_{r}(Y(\delta)) = \emptyset
\end{align*}
there exists some $S \in [0,T]$ so that $\sigma(S) \notin \Nc_{r}(X(\delta)) \cup \Nc_{r}(Y(\delta))$. Then by Lemma~\ref{lem:hyperplanes} we have
\begin{align*}
K_\Omega(p,q) 
& = K_\Omega(p,\sigma(S)) + K_\Omega(\sigma(S), q) \\
& \geq \frac{1}{2} \log \frac{d_{\Euc}(\sigma(S), T^{\Cb}_{x_p}\partial \Omega)}{d_{\Euc}(p, T^{\Cb}_{x_p}\partial \Omega)} + \frac{1}{2} \log \frac{d_{\Euc}(\sigma(S), T^{\Cb}_{y_q}\partial \Omega)}{d_{\Euc}(q, T^{\Cb}_{y_q}\partial \Omega)}  \\
& \geq  \frac{1}{2} \log \frac{1}{\delta_{\Omega}(p)} + \frac{1}{2} \log \frac{1}{\delta_{\Omega}(q)} + \log(r)
\end{align*}
\end{proof}
 
\begin{proof}[Proof of Theorem~\ref{thm:gromov_prod}]
Pick $\epsilon >0$ and $K \geq1$ so that the curve $\sigma_z: \Rb_{\geq 0} \rightarrow \Omega$ given by
\begin{align*}
\sigma_z(t)=z+e^{-2t}\epsilon n_z
\end{align*}
is an $K$-almost-geodesic for any $z \in \partial \Omega$. By compactness, there exists $R \geq 0$ so that 
\begin{align*}
K_{\Omega}(\sigma_z(0), o) \leq R
\end{align*}
for all $z \in \partial \Omega$. 

First suppose that $p_n, q_m \rightarrow x \in \partial \Omega$. Next let $x_n$ be a closest point in $\partial \Omega$ to $p_n$ and $y_m$ be a closest point in $\partial \Omega$ to $q_m$. Then we can suppose that $p_n=\sigma_{x_n}(s_n)$ and $q_m=\sigma_{y_m}(t_m)$ for some $s_n,t_m \rightarrow \infty$. 

Now fix $T > 0$. Then for $n,m$ large enough we have $s_n, t_m \geq T$ and 
\begin{align*}
2(p_n|q_m)_o = K_{\Omega}(p_n,o)+K_{\Omega}(q_m,o) -K_{\Omega}(p_n, q_m) \geq s_n+t_m-2R-2K- K_{\Omega}(p_n,q_m).
\end{align*}
Now 
\begin{align*}
K_{\Omega}(p_n,q_m) &\leq K_{\Omega}(\sigma_{x_n}(T), \sigma_{y_m}(T)) + K_{\Omega}(\sigma_{x_n}(T), p_n) + K_{\Omega}(\sigma_{y_m}(T), q_m) \\
&\leq K_{\Omega}(\sigma_{x_n}(T), \sigma_{y_m}(T)) + (s_n-T) + (t_m-T)+2K
\end{align*}
and so 
\begin{align*}
(p_n|q_m)_o\geq T - R - \frac{1}{2}K_{\Omega}(\sigma_{x_n}(T), \sigma_{y_m}(T))-2K.
\end{align*}
Since $x_n, y_m \rightarrow x$ we see that  $K_{\Omega}(\sigma_{x_n}(T), \sigma_{y_m}(T)) \rightarrow 0$ and so 
\begin{align*}
\liminf_{n,m \rightarrow \infty} (p_n|q_m)_o \geq T - R-2K.
\end{align*}
Since $T > 0$ was arbitrary this implies part (1) of the theorem. 

We now prove part (2). Suppose for a contradiction that 
\begin{align*}
\limsup_{n,m \rightarrow \infty} ( p_n | q_m)_o = \infty
\end{align*}
and $T_{x}^{\Cb} \partial \Omega \neq T_{y}^{\Cb} \partial \Omega$. Now by Proposition~\ref{prop:rough_geodesics} and Lemma~\ref{lem:dist} there exists $K > 0$ so that 
\begin{align*}
K_{\Omega}(o,p_n) \leq K + \frac{1}{2} \log \frac{1}{\delta_{\Omega}(p_n)},
\end{align*}
\begin{align*}
K_{\Omega}(o,q_m) \leq K + \frac{1}{2} \log \frac{1}{\delta_{\Omega}(q_m)},
\end{align*}
and 
\begin{align*}
K_{\Omega}(p_n,q_m) \geq \frac{1}{2} \log \frac{1}{\delta_{\Omega}(p_n)} + \frac{1}{2} \log \frac{1}{\delta_{\Omega}(q_m)}  - K
\end{align*}
for all $n,m \geq 0$. Thus 
\begin{align*}
(p_n|q_m)_o \leq \frac{3}{2}K.
\end{align*}
So we have contradiction and thus $T_{x}^{\Cb} \partial \Omega = T_{y}^{\Cb} \partial \Omega$
\end{proof}

\section{A Wolff-Denjoy theorem}

In this section we use Theorem~\ref{thm:gromov_prod} to prove Theorem~\ref{thm:wolf_i} which we restate: 

\begin{theorem}\label{thm:wolf}
 Suppose $\Omega \subset \Cb^d$ is a bounded convex domain with $C^{1,\alpha}$ boundary. If $f:\Omega \rightarrow \Omega$ is 1-Lipschitz with respect to the Kobayashi metric, then $f$ either has a fixed point in $\Omega$ or there exists some $x \in \partial \Omega$ so that 
 \begin{align*}
 \lim_{k \rightarrow \infty} d_{Euc}(f^{k}(p) , T_x^{\Cb} \partial \Omega) = 0
 \end{align*}
 for all $p \in \Omega$. 
 \end{theorem}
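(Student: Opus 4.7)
My plan is to extract information from Karlsson's general Wolff--Denjoy theorem for 1-Lipschitz self-maps of metric spaces (see~\cite{K2001}) and translate it into the Gromov-product language, where Theorem~\ref{thm:gromov_prod} identifies the limit in terms of a single complex tangent hyperplane of $\partial\Omega$.

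Since $f$ is 1-Lipschitz, the two conditions ``some orbit of $f$ is bounded in $(\Omega,K_\Omega)$'' and ``every orbit is bounded'' coincide. In the bounded-orbit case, Proposition~\ref{prop:completeness} makes every orbit closure Euclidean-compact inside $\Omega$, and a standard nonexpansive fixed-point argument (via a Chebyshev centre of the orbit, using the convexity of the Kobayashi balls on a convex domain) produces a fixed point of $f$ in $\Omega$. I therefore assume henceforth that every orbit is unbounded, so that $\delta_\Omega(f^n(p))\to 0$ for all $p\in\Omega$.

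Fix a basepoint $o\in\Omega$. Karlsson's theorem then furnishes a subsequence $n_k\to\infty$, a boundary point $x\in\partial\Omega$ with $f^{n_k}(o)\to x$ in the Euclidean topology, and a horofunction
\[
h(q)=\lim_{k\to\infty}\Bigl(K_\Omega(q,f^{n_k}(o))-K_\Omega(o,f^{n_k}(o))\Bigr)
\]
such that $h(f^n(o))\to-\infty$ as $n\to\infty$. Given any Euclidean cluster point $y$ of the orbit, say $f^{m_j}(o)\to y$, the values $h(f^{m_j}(o))$ tend to $-\infty$, and a diagonal extraction produces indices $j_i,k_i\to\infty$ with
\[
K_\Omega\bigl(o,f^{n_{k_i}}(o)\bigr)-K_\Omega\bigl(f^{m_{j_i}}(o),f^{n_{k_i}}(o)\bigr)\longrightarrow+\infty.
\]
Adding $K_\Omega(o,f^{m_{j_i}}(o))\geq 0$ and dividing by $2$ yields $(f^{m_{j_i}}(o)\mid f^{n_{k_i}}(o))_o\to+\infty$, so Theorem~\ref{thm:gromov_prod}(2) forces $T_y^{\Cb}\partial\Omega=T_x^{\Cb}\partial\Omega$, and in particular $y\in T_x^{\Cb}\partial\Omega$. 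Hence every cluster point of $\{f^n(o)\}$ lies in the closed set $T_x^{\Cb}\partial\Omega$, which is equivalent to $d_{\Euc}(f^n(o),T_x^{\Cb}\partial\Omega)\to 0$.

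To upgrade this to an arbitrary starting point $p$, I would use that $K_\Omega(f^n(p),f^n(o))\leq K_\Omega(p,o)$ is uniformly bounded in $n$. If some cluster point $y'$ of $\{f^n(p)\}$ had $T_{y'}^{\Cb}\partial\Omega\neq T_x^{\Cb}\partial\Omega$, then after passing to a subsequence $f^{n_j}(p)\to y'$ one could extract a companion limit $f^{n_j}(o)\to y\in T_x^{\Cb}\partial\Omega$; Lemma~\ref{lem:dist} would then force $K_\Omega(f^{n_j}(p),f^{n_j}(o))\to\infty$, contradicting the uniform bound. I expect the main obstacle to be the fixed-point existence in the bounded-orbit case, since $(\Omega,K_\Omega)$ need not be Busemann convex without stronger regularity on $\partial\Omega$, and some care is needed to carry out the Chebyshev-centre construction in this generality; the rest of the argument is essentially a dictionary between Karlsson's horofunction description and the Gromov-product estimate of Theorem~\ref{thm:gromov_prod}(2).
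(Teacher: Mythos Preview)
Your proposal is correct and follows essentially the paper's approach: both invoke Karlsson's dichotomy for 1-Lipschitz self-maps and then feed the resulting Gromov-product blowup into Theorem~\ref{thm:gromov_prod}(2). The paper runs the unbounded case via the record-sequence device (choosing $n_i$ with $K_\Omega(f^{n_i}(o),o)>K_\Omega(f^m(o),o)$ for all $m<n_i$) rather than horofunctions, and handles an arbitrary basepoint $p$ directly inside that same Gromov-product estimate instead of a separate Lemma~\ref{lem:dist} step, but these are cosmetic rearrangements of the same idea. On the bounded-orbit case you flagged as the obstacle: the paper sidesteps any Chebyshev-centre subtlety by taking a Zorn-minimal compact $f$-invariant set $K_0$ (so $f(K_0)=K_0$), forming $C=\bigcap_{k\in K_0} B_\Omega(k;R)$ for suitable $R$, noting $C$ is nonempty, $f$-invariant, and Euclidean-convex compact (Kobayashi balls in a bounded convex domain are Euclidean-convex, \cite[Proposition~2.3.46]{A1989}), and applying Brouwer's fixed-point theorem.
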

 
The proof of Theorem~\ref{thm:wolf} uses Theorem~\ref{thm:gromov_prod} and a result of Karlsson about the iterations of 1-Lipschitz maps on general metric spaces. In particular, the argument below is essentially the proof of Theorem 3.4 in~\cite{K2001} adapted to this specific setting. 
 
 \begin{proof}
 For $p \in \Omega$ and $R \geq 0$ let $B_{\Omega}(p;R)$ be the closed ball of radius $R$ centered at $p$ with respect to the Kobayashi metric. By Proposition 2.3.46 in~\cite{A1989} $B_{\Omega}(p;R)$ is a closed convex subset of $\Cb^d$. 
 
 Fix $o \in \Omega$. Then by~\cite[Theorem 5.6]{C1984b} either 
 \begin{align*}
 \sup_{n \geq 0} K_{\Omega}(f^n (o), o) < \infty
 \end{align*}
 or 
 \begin{align*}
 \lim_{n \rightarrow \infty} K_{\Omega}(f^n (o), o) = \infty.
 \end{align*}
 
First suppose that 
 \begin{align*}
 \sup_{n \geq 0} K_{\Omega}(f^n (o), o) < \infty.
 \end{align*}
 We claim that $f$ has a fixed point in $\Omega$. Let 
 \begin{align*}
 \Cc = \{ K \subset \Omega : K \text{ compact and } f(K) \subset K\}.
 \end{align*}
Notice that 
 \begin{align*}
 \overline{\{f^n(o) : n \geq 0\}} \in \Cc
 \end{align*}
 so $\Cc$ is non-empty. Then by Zorn's Lemma there exists a minimal element $K_0 \in \Cc$. Since $f(K_0)$ is also in $\Cc$ we see that $f(K_0)=K_0$. Now pick $R >0$ so that 
 \begin{align*}
 C = \cap_{k \in K_0} B_{\Omega}(k; R)
 \end{align*}
 is non-empty. Then $C$ is closed, convex, and $f(C) \subset C$. Thus by Brouwer's fixed-point theorem there exists some $c \in C$ so that $f(c)=c$. 
 
 Next suppose that
  \begin{align*}
 \lim_{n \rightarrow \infty} K_{\Omega}(f^n (o), o) = \infty.
 \end{align*}
Then pick a subsequence $n_i  \rightarrow \infty$ so that 
\begin{align*}
K_{\Omega}(f^{n_i} (o), o) > K_{\Omega}(f^{m} (o), o)
\end{align*}
for all $m < n_i$. By passing to another subsequence we may suppose that $f^{n_i} (o) \rightarrow x \in \partial \Omega$. 

Suppose that $p \in \Omega$ and $f^{m_j} (p) \rightarrow x^\prime$ for some sequence $m_j\rightarrow \infty$. We claim that $x^\prime \in T_{x}^{\Cb} \partial \Omega$. Pick a sequence $i_j \rightarrow \infty$ with $n_{i_j} > m_j$ then 
\begin{align*}
2( f^{n_{i_j}} (o) | f^{m_j} (p) )_o 
&= K_{\Omega}(  f^{n_{i_j}} (o), o) + K_{\Omega}(o, f^{m_j} (p)) - K_{\Omega}(f^{n_{i_j}} (o), f^{m_j} (p) ) \\
&\geq K_{\Omega}(  f^{n_{i_j}} (o), o) + K_{\Omega}(o, f^{m_j} (p)) - K_{\Omega}(f^{n_{i_j}-m_j} (o), o )-K_{\Omega}(p,o) \\
& \geq K_{\Omega}(o, f^{m_j} (p))-K_{\Omega}(p,o).
\end{align*}
So 
\begin{align*}
\lim_{j \rightarrow \infty} ( f^{n_{i_j}} (o) | f^{m_j} (p) )_o  = \infty
\end{align*}
and hence $x^\prime \in T_{x}^{\Cb} \partial \Omega$ by Theorem~\ref{thm:gromov_prod}.
\end{proof}

\section{The behavior of geodesics}

In this section we use Theorem~\ref{thm:gromov_prod} to prove a version of Theorem~\ref{thm:gromov_visible} for the Kobayashi metric on convex domains. 

\begin{theorem}\label{thm:geod_converge}
Suppose $\Omega \subset \Cb^d$ is a bounded convex domain with $C^{1,\alpha}$ boundary and $p_n, q_n \in \Omega$ are sequences such that $p_n \rightarrow x \in \partial \Omega$ and $q_n \rightarrow y \in \partial \Omega$ with $T_{x}^{\Cb} \partial \Omega \neq T_{y} \partial \Omega$. 

If $\sigma_n:[0,T_n] \rightarrow \Omega$ is an $K$-almost-geodesic with $\sigma_n(0)=q_n$ and $\sigma_n(T_n) = p_n$, then there exists $n_k \rightarrow \infty$ and $S_k \in [0,T_{n_k}]$ so that the $K$-almost-geodesics 
\begin{align*}
t \rightarrow \sigma_{n_k}(t+S_k)
\end{align*}
converge locally uniformly to an $K$-almost-geodesic $\sigma:\Rb \rightarrow\Omega$. Moreover, 
\begin{align*}
\lim_{t \rightarrow \infty} d_{\Euc}(\sigma(t), T_{x}^{\Cb} \partial \Omega) = 0
\end{align*}
and 
\begin{align*}
\lim_{t \rightarrow -\infty} d_{\Euc}(\sigma(t), T_{y}^{\Cb} \partial \Omega) = 0.
\end{align*}
\end{theorem}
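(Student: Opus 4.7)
The plan is to reparametrize each $\sigma_n$ by a shift $S_n$ so that $\tilde\sigma_n := \sigma_n(\cdot + S_n)$ passes through a fixed compact subset of $\Omega$ at time $0$, apply Arzel\`a--Ascoli to extract a locally uniform limit $\sigma:\Rb\to\Omega$, and then invoke Theorem~\ref{thm:gromov_prod} to identify the complex faces that $\sigma$ approaches at $\pm\infty$.

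Following the proof of Lemma~\ref{lem:dist}, fix $\epsilon > 0$ small enough that $\Nc_\epsilon(T_x^\Cb\partial\Omega)\cap\overline\Omega$ and $\Nc_\epsilon(T_y^\Cb\partial\Omega)\cap\overline\Omega$ are disjoint. For large $n$, $q_n\in\Nc_\epsilon(T_y^\Cb\partial\Omega)$, so set
\[
S_n := \inf\{t \geq 0 : \sigma_n(t) \notin \Nc_\epsilon(T_y^\Cb\partial\Omega)\}.
\]
By continuity $\sigma_n(S_n) \in \partial\Nc_\epsilon(T_y^\Cb\partial\Omega)\cap\Omega$, so $d_\Euc(\sigma_n(S_n),T_y^\Cb\partial\Omega) = \epsilon$, and by disjointness $d_\Euc(\sigma_n(S_n),T_x^\Cb\partial\Omega) \geq \epsilon$. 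The key step is to show $\delta_\Omega(\sigma_n(S_n)) \geq c > 0$ uniformly in $n$. If not, pass to a subsequence with $\sigma_n(S_n) \to z \in \partial\Omega$; the distance bounds above force $z\notin T_x^\Cb\partial\Omega\cup T_y^\Cb\partial\Omega$, hence $T_z^\Cb\partial\Omega$ differs from both $T_x^\Cb\partial\Omega$ and $T_y^\Cb\partial\Omega$. Apply Lemma~\ref{lem:dist} to the two pairs $(q_n,\sigma_n(S_n))$ and $(\sigma_n(S_n),p_n)$ and sum; combine with the almost-geodesic inequality $K_\Omega(q_n,\sigma_n(S_n))+K_\Omega(\sigma_n(S_n),p_n) \leq T_n + 2\log K$, the bound $T_n \leq K_\Omega(q_n,p_n) + \log K$, and the upper estimate $K_\Omega(q_n,p_n) \leq \frac{1}{2}\log\frac{1}{\delta_\Omega(q_n)} + \frac{1}{2}\log\frac{1}{\delta_\Omega(p_n)} + C'$ (obtained from Proposition~\ref{prop:rough_geodesics} by connecting $p_n$ and $q_n$ to a fixed basepoint along the normal almost-geodesics). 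The $\delta_\Omega(p_n),\delta_\Omega(q_n)$ terms cancel, forcing $\log\frac{1}{\delta_\Omega(\sigma_n(S_n))}$ to be bounded --- a contradiction.

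Lemma~\ref{lem:hyperplanes} applied to $T_y^\Cb\partial\Omega$ and $T_x^\Cb\partial\Omega$ now yields $S_n \to \infty$ and $T_n - S_n \to \infty$. Setting $\tilde\sigma_n(t) := \sigma_n(t+S_n)$ on $[-S_n,T_n-S_n]$, for each $T>0$ the $K$-Lipschitz bound places $\tilde\sigma_n([-T,T])$ in a closed Kobayashi ball of radius $KT+O(1)$ about a point of a fixed compact set; Proposition~\ref{prop:completeness} makes this a compact subset of $\Omega$, so the $\tilde\sigma_n$ are equicontinuous and equi-bounded in the Euclidean metric on every compact subinterval of $\Rb$. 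A diagonal Arzel\`a--Ascoli extraction then yields a locally uniform limit $\sigma:\Rb\to\Omega$ which inherits the $K$-almost-geodesic inequalities by passing to the limit.

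To identify the asymptotics, fix any cluster limit $\xi := \lim_k \sigma(t_k)$ with $t_k \to +\infty$; since $K_\Omega(\sigma(0),\sigma(t_k))\to\infty$, $\xi\in\partial\Omega$. Choose $n_k\to\infty$ with $T_{n_k}-S_{n_k} > t_k$ and $\tilde\sigma_{n_k}(t_k)\to\xi$. Applying the almost-geodesic property on $\tilde\sigma_{n_k}|_{[0,T_{n_k}-S_{n_k}]}$ gives
\[
(\tilde\sigma_{n_k}(t_k)\,|\,p_{n_k})_{\tilde\sigma_{n_k}(0)} \geq t_k - \tfrac{3}{2}\log K,
\]
and shifting the basepoint to a fixed $o \in \Omega$ costs at most $K_\Omega(\tilde\sigma_{n_k}(0),o) = O(1)$ since $\tilde\sigma_{n_k}(0)\to\sigma(0)\in\Omega$. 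Hence $(\tilde\sigma_{n_k}(t_k)\,|\,p_{n_k})_o\to\infty$, and part (2) of Theorem~\ref{thm:gromov_prod} forces $T_\xi^\Cb\partial\Omega = T_x^\Cb\partial\Omega$, so $\xi\in T_x^\Cb\partial\Omega$. Since this holds for every cluster point of $\sigma(t)$ as $t\to+\infty$, $d_\Euc(\sigma(t),T_x^\Cb\partial\Omega)\to 0$; the $-\infty$ statement follows by symmetry. The main obstacle is the compactness claim of the second paragraph: the first-exit definition of $S_n$ is precisely what guarantees that any boundary cluster point of $\sigma_n(S_n)$ has complex tangent hyperplane distinct from both $T_x^\Cb\partial\Omega$ and $T_y^\Cb\partial\Omega$, so that Lemma~\ref{lem:dist} may be applied simultaneously on both legs of $\sigma_n$.
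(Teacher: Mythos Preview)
Your proof is correct and follows essentially the same route as the paper's: both arguments locate a parameter value $S_n$ at which $\sigma_n$ lies uniformly far from $\partial\Omega$ by combining Lemma~\ref{lem:dist} with the upper bound from Proposition~\ref{prop:rough_geodesics}, then extract a limit via Arzel\`a--Ascoli using the $K$-Lipschitz property, and finally identify the ends of $\sigma$ via Gromov-product estimates and Theorem~\ref{thm:gromov_prod}. The only cosmetic differences are that you take $S_n$ to be a first-exit time (the paper just picks any time at which $\sigma_n$ is outside both $\epsilon$-neighborhoods) and that your asymptotic argument goes directly from a cluster point of $\sigma(t_k)$ to $T_x^{\Cb}\partial\Omega$ via one diagonal sequence, whereas the paper first stabilizes the complex face along $\sigma$ and then matches it with $T_x^{\Cb}\partial\Omega$; both versions are fine.
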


\begin{remark} Notice that Theorem~\ref{thm:geod_converge} implies Theorem~\ref{thm:visible} from Section~\ref{sec:outline}. \end{remark}

\begin{proof}
We first claim that if $n_k \rightarrow \infty$, $S_k \in [0,T_k]$, and $\sigma_{n_k}(S_k) \rightarrow z \in \partial \Omega$ then 
\begin{align*}
z \in T_{x}^{\Cb} \partial \Omega \cup T_{y}^{\Cb} \partial \Omega.
\end{align*}
Suppose not then $T_z^{\Cb} \partial \Omega \neq T_x^{\Cb} \partial \Omega$ and $T_z^{\Cb} \partial \Omega \neq T_y^{\Cb} \partial \Omega$.  So by Lemma~\ref{lem:dist} there exists $K_1 \geq 0$ so that 
\begin{align*}
K_{\Omega}(p_{n_k}, q_{n_k}) 
&\geq K_{\Omega}(p_{n_k}, \sigma_{n_k}(S_k) ) + K_\Omega(\sigma_{n_k}(S_k) , q_{n_k}) -3K \\
& \geq \frac{1}{2} \log \frac{1}{\delta_\Omega(p_{n_k})} +  \log \frac{1}{\delta_\Omega(\sigma_{n_k}(S_k))} +  \frac{1}{2} \log \frac{1}{\delta_\Omega(q_{n_k})} - 3K-2K_1.
\end{align*}
However by Proposition~\ref{prop:rough_geodesics} there exists $K_2 \geq 0$ so that 
\begin{align*}
K_{\Omega}(p_{n_k}, q_{n_k}) \leq  \frac{1}{2} \log \frac{1}{\delta_\Omega(p_{n_k})} +  \frac{1}{2} \log \frac{1}{\delta_\Omega(q_{n_k})} + K_2.
\end{align*}
Combining the two inequalities implies that 
\begin{align*}
\log \frac{1}{\delta_\Omega(\sigma_{n_k}(S_k))}  \leq 3K+2K_1 + K_2
\end{align*}
which is impossible since $\delta_\Omega(\sigma_{n_k}(S_k)) \rightarrow 0$.

Now there exists $\epsilon > 0$ so that the sets 
\begin{align*}
U = \{ z \in \Omega : d_{\Euc}(z, T_{x}^{\Cb} \partial \Omega) \leq \epsilon \}
\end{align*}
and
\begin{align*}
V = \{ z \in \Omega: d_{\Euc}(z, T_{y}^{\Cb} \partial \Omega) \leq \epsilon \}
\end{align*}
 are disjoint. Pick $S_n\in [0,T_n]$ so that $\sigma_n(S_n) \in \Omega \setminus (U \cup V)$ and pass to a subsequence so that $\sigma_n(S_n) \rightarrow z \in \overline{\Omega}$. Now by construction and the claim above we must have that $z \in \Omega$. Then since each $\sigma_n$ is $K$-Lipschitz (with respect to the Kobayashi metric) we can pass to a subsequence so that the $K$-almost-geodesics 
\begin{align*}
t \rightarrow \sigma_n(t+S_n)
\end{align*}
converge locally uniformly to an $K$-almost-geodesic $\sigma:\Rb \rightarrow\Omega$. 

Next we claim that 
\begin{align*}
\lim_{t \rightarrow \infty} d_{\Euc}(\sigma(t), T_{x}^{\Cb} \partial \Omega) = 0.
\end{align*}
Fix some sequence $s_m \rightarrow \infty$ so that $\sigma(s_m)$ converges to some $x^\prime \in \partial \Omega$. Now
\begin{align*}
\lim_{m,t \rightarrow \infty} (\sigma(s_m)|\sigma(t))_{\sigma(0)} \geq \frac{1}{2} \left(\lim_{n,t \rightarrow \infty} \min\{s_m,t\} - 3K \right)= \infty
\end{align*}
and so by Theorem~\ref{thm:gromov_prod}
\begin{align*}
\lim_{t \rightarrow \infty} d_{\Euc}(\sigma(t), T_{x^\prime}^{\Cb} \partial \Omega) = 0.
\end{align*}
On the other hand, since $\sigma_n(\cdot + S_n)$ converges locally uniformly to $\sigma$ we can pick $s_n^\prime \rightarrow \infty$ so that $\sigma_n(s_n^\prime+S_n) \rightarrow x^\prime$. Then 
\begin{align*}
\lim_{n \rightarrow \infty} (\sigma_n(s_n^\prime+S_n)|p_n)_{\sigma(0)}  
&\geq \lim_{n \rightarrow \infty} (\sigma_n(s_n^\prime+S_n)|\sigma_n(T_n))_{\sigma_n(0)}-K_{\Omega}(\sigma_n(0), \sigma(0)) \\
& \geq \frac{1}{2} \left( \lim_{n \rightarrow \infty} \min\{s_n^\prime+S_n,T_n\} - 3K \right) = \infty.
\end{align*}
So $x^\prime \in T_{x}^{\Cb} \partial \Omega$ by Theorem~\ref{thm:gromov_prod} and thus $T_{x^\prime}^{\Cb} \partial \Omega = T_{x}^{\Cb} \partial \Omega$.

The proof that 
\begin{align*}
\lim_{t \rightarrow -\infty} d_{\Euc}(\sigma(t), T_{y}^{\Cb} \partial \Omega) = 0
\end{align*}
is identical.
\end{proof}

\section{Finding a hyperbolic element}\label{sec:finding_hyp}

The main purpose of this section is to prove the following existence result for hyperbolic elements: 

 \begin{theorem}\label{thm:exist_hyp}
  Suppose $\Omega \subset \Cb^d$ is a bounded convex domain with $C^{1,\alpha}$ boundary. Then $\Aut(\Omega)$ contains a hyperbolic element if and only if there exists $x, y \in \Lc(\Omega)$ with $T_{x}^{\Cb} \partial \Omega \neq T_{y}^{\Cb} \partial \Omega$.
   \end{theorem}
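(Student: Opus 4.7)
The forward direction is essentially immediate. If $\varphi \in \Aut(\Omega)$ is hyperbolic, fix $o \in \Omega$ and pick any subsequential limits $x^\pm \in \partial\Omega$ of $\{\varphi^n o\}_{n \geq 0}$ and $\{\varphi^{-n} o\}_{n \geq 0}$. Theorem~\ref{thm:wolf_i} applied to $\varphi^{\pm 1}$ gives $x^\pm \in H_{\varphi^{\pm 1}}^+ \cap \partial\Omega$. Since $\partial\Omega$ is $C^1$, the unique complex supporting hyperplane of $\Omega$ at a boundary point $z$ is $T_z^{\Cb}\partial\Omega$, so $T_{x^\pm}^{\Cb}\partial\Omega = H_{\varphi^{\pm 1}}^+$, which differ by hyperbolicity. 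Both $x^\pm \in \Lc(\Omega)$ by definition.

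For the reverse direction, fix $o \in \Omega$ and pick sequences $\varphi_n, \psi_n \in \Aut(\Omega)$ with $\varphi_n o \to x$ and $\psi_n o \to y$. After passing to a subsequence we may also assume $\varphi_n^{-1} o \to x'$ and $\psi_n^{-1} o \to y'$ for some $x', y' \in \partial\Omega$. The key reduction I would use is: it suffices to exhibit $\varphi \in \Aut(\Omega)$ and integers $k_j \to \infty$ along which $\varphi^{k_j} o \to z^+$ and $\varphi^{-k_j} o \to z^-$ with $T_{z^+}^{\Cb}\partial\Omega \neq T_{z^-}^{\Cb}\partial\Omega$. Indeed, such $\varphi$ has unbounded orbit (hence is not elliptic), so Theorem~\ref{thm:wolf_i} provides attracting hyperplanes $H_{\varphi^{\pm 1}}^+$ with $z^\pm \in H_{\varphi^{\pm 1}}^+ \cap \partial\Omega$. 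By $C^1$ smoothness this forces $T_{z^\pm}^{\Cb}\partial\Omega = H_{\varphi^{\pm 1}}^+$, and their distinctness rules out the parabolic case, so $\varphi$ is hyperbolic.

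To find such a $\varphi$ I would take the composition $\beta_n := \psi_n^{-1} \varphi_n$. The isometry property yields $K_\Omega(o, \beta_n o) = K_\Omega(\psi_n o, \varphi_n o)$, which tends to infinity by Lemma~\ref{lem:dist} since $T_x^{\Cb}\partial\Omega \neq T_y^{\Cb}\partial\Omega$. Joining the normal almost-geodesics at $\psi_n o$ and $\varphi_n o$ supplied by Proposition~\ref{prop:rough_geodesics} with a geodesic segment gives an almost-geodesic $\sigma_n:[0, T_n] \to \Omega$ from near $y$ to near $x$; Theorem~\ref{thm:geod_converge} extracts, after subsequence and reparametrization, a bi-infinite limit almost-geodesic $\sigma_\infty: \Rb \to \Omega$ whose two ends converge to $T_y^{\Cb}\partial\Omega$ and $T_x^{\Cb}\partial\Omega$. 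A ping-pong-style argument then shows that for $n$ sufficiently large, the iterate orbit $\{\beta_n^k o\}_{k \in \Zb}$ shadows a translate of $\sigma_n$ within bounded Kobayashi distance, so that its forward iterates accumulate on $T_x^{\Cb}\partial\Omega$ and its backward iterates on $T_y^{\Cb}\partial\Omega$, verifying the hypothesis of the reduction.

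The main obstacle is precisely this last step. A single automorphism with large displacement $K_\Omega(o, \beta_n o)$ is not automatically hyperbolic; it can be parabolic with vanishing stable translation length, or elliptic with a fixed point escaping to $\partial\Omega$. The ping-pong therefore uses the visibility content of Theorem~\ref{thm:geod_converge} in an essential way, to prevent iterates from folding back toward the basepoint, and Theorem~\ref{thm:gromov_prod}(1) to translate the resulting uniform boundedness of Gromov products $(\beta_n^k o \mid \beta_n^{-k} o)_o$ into the statement that forward and backward accumulation points live in distinct complex faces. Making this visibility-to-translation-length implication quantitative, uniformly in $k$ for $n$ sufficiently large, is the key technical step.
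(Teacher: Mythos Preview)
Your forward direction is fine and matches the paper. For the reverse direction, however, the ``ping-pong'' step you flag as the main obstacle is a genuine gap, and your sketch does not close it. Knowing that $K_\Omega(o,\beta_n o)\to\infty$ gives no control over $\beta_n^2 o$, $\beta_n^3 o$, etc.; a parabolic $\beta_n$ can have arbitrarily large displacement while all forward and backward iterates accumulate on the \emph{same} complex face. Your proposed use of Theorem~\ref{thm:geod_converge} produces a limit almost-geodesic $\sigma_\infty$, but there is no mechanism in your argument forcing the orbit $\{\beta_n^k o\}_{k\in\Zb}$ to shadow it: the almost-geodesic $\sigma_n$ joins $\psi_n o$ to $\varphi_n o$, not successive powers of $\beta_n$, and $\beta_n$ has no a priori relation to $\sigma_n$ beyond moving its initial point to its terminal point. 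Likewise, bounding the Gromov products $(\beta_n^k o\,|\,\beta_n^{-k}o)_o$ would indeed suffice, but you have not shown how visibility yields this bound; the argument is circular as written, since visibility only applies once you already know the two endpoints lie near distinct faces.

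The paper sidesteps iterates entirely. Its key observation (Lemmas~\ref{lem:para_inverse} and~\ref{lem:elliptic_inverse}) is that if $\varphi_n$ is elliptic or parabolic and $\varphi_n o\to x$, then $\varphi_n^{-1}o$ is also forced toward $T_x^{\Cb}\partial\Omega$. Hence any sequence $\gamma_k$ with $\gamma_k o\to x^-$ and $\gamma_k^{-1}o\to x^+$ landing in distinct faces must eventually be hyperbolic (Lemma~\ref{lem:dual_cond}). To manufacture such $\gamma_k$, the paper proves a uniform attraction statement (Proposition~\ref{prop:uniform_att}): if the $\varphi_n$ are all elliptic or parabolic with $\varphi_n o\to x$, then for any neighborhood $U$ of the closed face at $x$ one has $\varphi_n(\Omega\setminus U)\subset U$ for large $n$. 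This is where the real work (and the visibility Theorem~\ref{thm:geod_converge}) is spent. With it in hand, a classical ping-pong on \emph{neighborhoods} (not on iterates) shows that $\gamma=\phi_m\varphi_n^{-1}$ sends $p$ into $U^-$ while $\gamma^{-1}$ sends $p$ into $U^+$, and shrinking the neighborhoods produces the desired sequence. The missing idea in your proposal is precisely this reduction to a \emph{sequence} criterion via the elliptic/parabolic inverse lemmas, which eliminates any need to understand high powers of a single element.
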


The proof of Theorem~\ref{thm:exist_hyp} will require a number of preliminary results concerning the behavior of elliptic, hyperbolic, and parabolic elements. 

\subsection{Invariance of accumulation set} 
 
\begin{definition} Suppose $\Omega \subset \Cb^d$ is a bounded convex domain with $C^{1,\alpha}$ boundary. For $\varphi \in \Aut(\Omega)$ non-elliptic let $\Lc(\Omega, \varphi)$ be the set of points $x \in \partial \Omega$ where there exists some $p \in \Omega$ and some sequence $m_i \rightarrow \infty$ such that $\varphi^{m_i} p \rightarrow x$. 
\end{definition}

Notice that 
\begin{align*}
\Lc(\Omega, \varphi) \subset \Lc(\Omega) \cap H_{\varphi}^+
\end{align*}
by Theorem~\ref{thm:wolf}. Moreover the subset $\Lc(\Omega, \varphi) \subset \partial \Omega$ is invariant by $\varphi$ in the following sense:

\begin{lemma}\label{lem:point_attract}
Suppose $\Omega \subset \Cb^d$ is a bounded convex domain with $C^{1,\alpha}$ boundary and $\varphi \in \Aut(\Omega)$ is non-elliptic. If $x \in \Lc(\Omega,\varphi)$ and $p_n \in \Omega$ converges to $x$, then 
\begin{align*}
\lim_{n \rightarrow \infty} d_{\Euc}(\varphi^{k} p_n, H_{\varphi}^+) =0
\end{align*}
for any $k \in \Zb$.
\end{lemma}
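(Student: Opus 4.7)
The plan is to combine the isometric invariance of the Kobayashi Gromov product with Theorem~\ref{thm:gromov_prod}. First I would observe that any forward orbit of $\varphi$ has Euclidean distance to $H_\varphi^+$ tending to zero (this is the defining property of the attracting hyperplane, from Theorem~\ref{thm:wolf_i}), so $\Lc(\Omega, \varphi) \subseteq H_\varphi^+ \cap \partial \Omega$; in particular $x \in H_\varphi^+$. Because $\partial \Omega$ is $C^1$ and $H_\varphi^+$ is a supporting \emph{complex} hyperplane, the uniqueness of the complex tangent forces
$$T_w^\Cb \partial \Omega = H_\varphi^+$$
for every $w \in H_\varphi^+ \cap \partial \Omega$.

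Fix $k \in \Zb$ and let $z \in \overline{\Omega}$ be any subsequential limit of $\varphi^k p_n$; showing $z \in H_\varphi^+$ along every such subsequence is equivalent to the desired conclusion. A quick preliminary rules out $z \in \Omega$: if so, then $p_n = \varphi^{-k}(\varphi^k p_n)$ would subconverge inside $\Omega$ to $\varphi^{-k}(z)$, contradicting $p_n \to x \in \partial \Omega$. So $z \in \partial \Omega$.

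The main step is to invoke Theorem~\ref{thm:gromov_prod} twice. Since $p_n \to x$ and $\varphi^{m_i} q \to x$, part (1) of that theorem gives $(p_n | \varphi^{m_i} q)_o \to \infty$ as $n, i \to \infty$. Because $\varphi$ is a Kobayashi isometry, a direct computation yields
$$(\varphi^k p_n | \varphi^{m_i+k} q)_o = (p_n | \varphi^{m_i} q)_{\varphi^{-k} o},$$
and the Gromov product at two base points differs by at most the Kobayashi distance between them, which is a constant once $k$ is fixed. Hence the left-hand side also tends to $\infty$. Passing to a subsequence we may assume $\varphi^{m_i+k} q \to x'$ for some $x' \in \partial \Omega$; since $m_i + k \to \infty$, Theorem~\ref{thm:wolf_i} places $x' \in H_\varphi^+$, so $T_{x'}^\Cb \partial \Omega = H_\varphi^+$ by the first paragraph. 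Finally, applying Theorem~\ref{thm:gromov_prod}(2) to the sequences $\varphi^k p_n \to z$ and $\varphi^{m_i+k} q \to x'$ gives $T_z^\Cb \partial \Omega = T_{x'}^\Cb \partial \Omega = H_\varphi^+$, and since $z$ lies on its own complex tangent hyperplane we conclude $z \in H_\varphi^+$.

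I do not anticipate any significant obstacle: every ingredient is already in place, and the only point requiring care is the bookkeeping of the base-point shift in the Gromov product, which costs only a fixed additive constant because $k$ is fixed.
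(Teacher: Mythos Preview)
Your proof is correct and follows essentially the same route as the paper: use Theorem~\ref{thm:gromov_prod}(1) on the two sequences converging to $x$, exploit that $\varphi$ is a Kobayashi isometry to transfer the Gromov product estimate to the shifted sequences (paying only a fixed additive constant from the base-point change), and then apply Theorem~\ref{thm:gromov_prod}(2). The paper's version is marginally more direct in that it compares $\varphi^k p_n$ against the \emph{original} orbit sequence $\varphi^{m_i}p$ (already known to converge to $x$) via the inequality $(\varphi^k p_n \,|\, \varphi^{m_i}p)_p \geq (p_n \,|\, \varphi^{m_i}p)_p - 2K_\Omega(\varphi^{-k}p,p)$, thereby avoiding your extra step of passing to a subsequential limit $x'$ of $\varphi^{m_i+k}q$ and invoking Theorem~\ref{thm:wolf_i} to locate it on $H_\varphi^+$.
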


\begin{proof}
Suppose that  $\varphi^{m_i} p \rightarrow x \in \partial \Omega$ and $p_n \rightarrow x$. Then by Theorem~\ref{thm:gromov_prod}
\begin{align*}
\lim_{n, i \rightarrow \infty} ( p_n | \varphi^{m_i} p)_p = \infty.
\end{align*}
Moreover 
\begin{align*}
( \varphi^k p_n | \varphi^{m_i} p)_p 
& =  ( p_n | \varphi^{m_i-k} p)_{\varphi^{-k}p} \geq  ( p_n | \varphi^{m_i} p)_{p} - K_{\Omega}(\varphi^{m_i-k} p , \varphi^{m_i} p)-K_{\Omega}(\varphi^{-k}p , p) \\
&= ( p_n | \varphi^{m_i} p)_{p} -2K_{\Omega}(\varphi^{-k}p , p).
\end{align*}
So 
\begin{align*}
\lim_{n,i \rightarrow \infty} ( \varphi^k p_n | \varphi^{m_i} p)_p  = \infty
\end{align*}
and then by Theorem~\ref{thm:gromov_prod}
\begin{align*}
\lim_{n \rightarrow \infty} d_{\Euc}(\varphi^{k} p_n, H_{\varphi}^+) =0.
\end{align*}
\end{proof}

\subsection{Continuity of attracting hyperplanes} 

\begin{lemma}\label{lem:cont_att}
Suppose $\Omega \subset \Cb^d$ is a bounded convex domain with $C^{1,\alpha}$ boundary, $p \in \Omega$, and $\varphi_n \in \Aut(\Omega)$. If each $\varphi_n$ is not elliptic and $\varphi_n p \rightarrow x \in \partial \Omega$,  then $H_{\varphi_n}^+ \rightarrow T_{x}^{\Cb} \partial \Omega$ in the space of complex hyperplanes. 
\end{lemma}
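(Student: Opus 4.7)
The plan is to argue by contradiction. By compactness of the space of complex hyperplanes not meeting $\Omega$, if $H_{\varphi_n}^+$ fails to converge to $T_x^{\Cb} \partial \Omega$, we may pass to a subsequence so that $H_{\varphi_n}^+ \to H$ for some complex supporting hyperplane $H$ with $H \neq T_x^{\Cb} \partial \Omega$. Because $\partial \Omega$ is $C^1$, at every $z \in \partial \Omega$ the complex tangent $T_z^{\Cb} \partial \Omega$ is the only complex supporting hyperplane of $\Omega$ through $z$. Consequently $x \notin H$, while for every $z \in H \cap \partial \Omega$ we have $T_z^{\Cb} \partial \Omega = H$.

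The strategy is to manufacture a sequence $z_n \in \Omega$ with $z_n \to z \in H \cap \partial \Omega$ and $(\varphi_n p \mid z_n)_p \to \infty$. Part (2) of Theorem~\ref{thm:gromov_prod} will then force $T_x^{\Cb} \partial \Omega = T_z^{\Cb} \partial \Omega = H$, contradicting the assumption $H \neq T_x^{\Cb} \partial \Omega$.

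To build $z_n$ I would use a Karlsson-style maximizing trick. Fix $n$. Since $\varphi_n$ is non-elliptic, the dichotomy quoted in the proof of Theorem~\ref{thm:wolf} (from \cite{C1984b}) gives $K_\Omega(\varphi_n^k p, p) \to \infty$ as $k \to \infty$; hence there are infinitely many ``records'' $k$ satisfying $K_\Omega(\varphi_n^k p, p) \geq K_\Omega(\varphi_n^m p, p)$ for all $m \leq k$, and these records tend to infinity. By Theorem~\ref{thm:wolf} applied to $\varphi_n$ individually, $d_{\Euc}(\varphi_n^k p, H_{\varphi_n}^+) \to 0$ as $k \to \infty$. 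Choose a record $k_n \geq 2$ large enough that $d_{\Euc}(\varphi_n^{k_n} p, H_{\varphi_n}^+) < 1/n$ and set $z_n = \varphi_n^{k_n} p$; after extracting a further subsequence, $z_n \to z \in H \cap \partial \Omega$ (the limit lies in $H$ since $H_{\varphi_n}^+ \to H$ and $d_{\Euc}(z_n, H_{\varphi_n}^+) \to 0$, and in $\partial \Omega$ since $H \cap \Omega = \emptyset$). The Gromov product bound is then a one-line computation using the isometry of $\varphi_n$ together with the record property: $K_\Omega(\varphi_n p, z_n) = K_\Omega(p, \varphi_n^{k_n-1} p) \leq K_\Omega(p, \varphi_n^{k_n} p) = K_\Omega(p, z_n)$, so
\begin{align*}
2(\varphi_n p \mid z_n)_p = K_\Omega(\varphi_n p, p) + K_\Omega(z_n, p) - K_\Omega(\varphi_n p, z_n) \geq K_\Omega(\varphi_n p, p) \to \infty.
\end{align*}

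The main obstacle is choosing $k_n$ to simultaneously satisfy two constraints: it must be a Karlsson record for $\varphi_n$ (for the Gromov product lower bound) and it must be large enough that $\varphi_n^{k_n} p$ is very close to $H_{\varphi_n}^+$ (so that $z_n$ limits into $H$). These conditions are compatible because the records themselves tend to infinity in $k$ for each fixed $n$, and by Theorem~\ref{thm:wolf} every sufficiently high iterate of $\varphi_n$, record or not, is close to $H_{\varphi_n}^+$.
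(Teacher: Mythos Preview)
Your argument is correct and is essentially the paper's own proof: both pass to a convergent subsequence $H_{\varphi_n}^+ \to H$, choose iterates $\varphi_n^{k_n}p$ that are close to $H_{\varphi_n}^+$ and satisfy a record-type inequality, then use that inequality to bound the Gromov product $(\varphi_n p \mid \varphi_n^{k_n} p)_p$ from below by $\tfrac{1}{2}K_\Omega(\varphi_n p, p)$ and invoke Theorem~\ref{thm:gromov_prod}. The only cosmetic difference is that the paper imposes the weaker one-step condition $K_\Omega(\varphi_n^{m_n}p,p)\geq K_\Omega(\varphi_n^{m_n-1}p,p)$ rather than your full Karlsson record, and argues directly rather than by contradiction; your stronger hypothesis is harmless since the computation only uses that one inequality.
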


\begin{proof}
By compactness, it is enough to show that every convergent subsequence of $H^+_{\varphi_n}$ converges to $T_{x}^{\Cb} \partial \Omega$. So without loss of generality, assume that $H^+_{\varphi_n}$ converges to some complex hyperplane $H$. Now for each $n \in \Nb$
\begin{align*}
\lim_{m \rightarrow \infty} d_{\Euc}(\varphi_n^m p, H_{\varphi_n}^+) = 0.
\end{align*}
Select $m_n$ so that  
\begin{align*}
d_{\Euc}(\varphi_n^{m_n} p, H_{\varphi_n}^+) \leq 1/n
\end{align*}
and
\begin{align*}
K_{\Omega}(\varphi_n^{m_n} p, p ) \geq K_{\Omega}(\varphi_n^{m_n-1} p, p ).
\end{align*}
By passing to another subsequence we can suppose that $\varphi_n^{m_n}p \rightarrow x^\prime \in \partial \Omega$. By construction $x^\prime \in H$ and so $T_{x^\prime}^{\Cb} \partial \Omega = H$. 

However, 
\begin{align*}
2( \varphi^{m_n}_n p | \varphi_n p)_p 
&= K_{\Omega}( \varphi^{m_n}_n p, p) + K_\Omega(p, \varphi_n p)- K_{\Omega}( \varphi^{m_n}_n p,  \varphi_n p) \\
& = K_{\Omega}( \varphi^{m_n}_n p, p) + K_\Omega(p, \varphi_n p)- K_{\Omega}( \varphi^{m_n-1}_n p,  p) \\
& \geq K_\Omega(p, \varphi_n p)
\end{align*}
and so 
\begin{align*}
\lim_{n \rightarrow \infty} ( \varphi^{m_n}_n p | \varphi_n p)_p  = \infty.
\end{align*}
Thus by Theorem~\ref{thm:gromov_prod} $x^\prime \in T_{x}^{\Cb} \partial \Omega$ which implies that 
\begin{align*}
H = T_{x^\prime}^{\Cb} \partial \Omega = T_{x}^{\Cb} \partial \Omega.
\end{align*}
\end{proof}

\subsection{Parabolic automorphisms}

\begin{lemma}\label{lem:para_inverse}
Suppose $\Omega \subset \Cb^d$ is a bounded convex domain with $C^{1,\alpha}$ boundary, $\varphi_n \in \Aut(\Omega)$, and $p \in \Omega$. If $\varphi_n p \rightarrow x \in \partial \Omega$ and each $\varphi_n$ is parabolic, then
\begin{align*}
\lim_{n \rightarrow \infty} d_{\Euc}(\varphi_n q, T_{x}^{\Cb} \partial \Omega) =0 =  \lim_{n \rightarrow \infty} d_{\Euc}(\varphi_n^{-1} q, T_{x}^{\Cb} \partial \Omega)
\end{align*}
for all $q \in \Omega$. 
\end{lemma}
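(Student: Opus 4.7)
The two assertions run in parallel, but the first is a direct consequence of the isometry property, while the second is where parabolicity is actually used.

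For the first limit, the plan is to exploit the isometry $\varphi_n$ directly. Since $K_\Omega(\varphi_n p, \varphi_n q) = K_\Omega(p, q)$ is bounded while $K_\Omega(\varphi_n p, o) \to \infty$, the same is true of $K_\Omega(\varphi_n q, o)$, so the bounded sequence $\varphi_n q$ accumulates only on $\partial \Omega$. Moreover,
\begin{align*}
(\varphi_n p \mid \varphi_n q)_o \geq \tfrac{1}{2}\bigl(K_\Omega(\varphi_n p, o) + K_\Omega(\varphi_n q, o)\bigr) - \tfrac{1}{2} K_\Omega(p, q) \to \infty.
\end{align*}
Theorem~\ref{thm:gromov_prod}(2) then forces any accumulation point $y \in \partial \Omega$ of $\varphi_n q$ to satisfy $T_y^{\Cb}\partial\Omega = T_x^{\Cb}\partial\Omega$, hence $y \in T_x^{\Cb}\partial\Omega$, which gives the first limit after a standard subsequence argument.

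For the second limit, the analogous isometry computation $K_\Omega(\varphi_n^{-1} q, p) = K_\Omega(q, \varphi_n p) \to \infty$ shows that $\varphi_n^{-1} q$ also accumulates only on $\partial \Omega$. I would then combine Lemma~\ref{lem:cont_att} with the defining identity for parabolicity, $H_{\varphi_n}^+ = H_{\varphi_n^{-1}}^+$. Passing to a subsequence with $\varphi_n^{-1} q \to y \in \partial \Omega$, Lemma~\ref{lem:cont_att} applied to the non-elliptic sequence $\{\varphi_n^{-1}\}$ with basepoint $q$ gives $H_{\varphi_n^{-1}}^+ \to T_y^{\Cb}\partial\Omega$, while the same lemma applied to $\{\varphi_n\}$ with basepoint $p$ gives $H_{\varphi_n}^+ \to T_x^{\Cb}\partial\Omega$. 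Parabolicity identifies the two limits, so $T_y^{\Cb}\partial\Omega = T_x^{\Cb}\partial\Omega$, and hence $y \in T_x^{\Cb}\partial\Omega$.

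I do not anticipate any serious obstacle; both parts are short applications of the tools built in the preceding sections. The only subtle point is confirming that $\varphi_n q$ and $\varphi_n^{-1} q$ leave every compact subset of $\Omega$, which the isometry computations above handle cleanly. Conceptually, the lemma expresses that parabolicity is exactly the condition ensuring the attraction of the forward orbit propagates to a symmetric statement about the backward orbit.
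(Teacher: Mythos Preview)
Your proposal is correct and matches the paper's one-line proof (``This follows immediately from Lemma~\ref{lem:cont_att}''): your second-limit argument is exactly the intended unpacking of that reference, and your first-limit argument via the Gromov product is a slightly more direct variant of what Lemma~\ref{lem:cont_att} would itself supply when applied to $\{\varphi_n\}$ at the two basepoints $p$ and $q$.
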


\begin{proof} This follows immediately from Lemma~\ref{lem:cont_att}. \end{proof}

\subsection{Elliptic automorphisms}

\begin{lemma}\label{lem:elliptic_fixed_pt}
Suppose $\Omega \subset \Cb^d$ is a bounded convex domain with $C^{1,\alpha}$ boundary, $\varphi_n \in \Aut(\Omega)$, and $p \in \Omega$. If $\varphi_n p \rightarrow x \in \partial \Omega$, each $\varphi_n$ is elliptic, and $e_n \in \Omega$ is a fixed point of $\varphi_n$, then 
\begin{align*}
\lim_{n \rightarrow \infty} d_{\Euc}(e_n, T_{x}^{\Cb} \partial \Omega) = 0.
\end{align*}
\end{lemma}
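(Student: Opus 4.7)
The plan is to exploit the fact that $\varphi_n e_n = e_n$ together with the behavior of the Gromov product established in Theorem~\ref{thm:gromov_prod}. The strategy is to first argue that any accumulation point of $(e_n)$ must lie in $\partial\Omega$, and then show that every such accumulation point lies in $T_x^{\Cb}\partial\Omega$ by producing a divergent Gromov product.

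First I would verify that $K_\Omega(p,e_n) \to \infty$. Since $\varphi_n$ is a Kobayashi isometry and $\varphi_n e_n = e_n$, we have $K_\Omega(e_n, \varphi_n p) = K_\Omega(\varphi_n^{-1}e_n, p) = K_\Omega(e_n, p)$. The triangle inequality then gives
\begin{align*}
K_\Omega(p, \varphi_n p) \leq K_\Omega(p, e_n) + K_\Omega(e_n, \varphi_n p) = 2K_\Omega(p, e_n).
\end{align*}
Since $\varphi_n p \to x \in \partial\Omega$ we have $K_\Omega(p, \varphi_n p) \to \infty$, and hence $K_\Omega(p, e_n) \to \infty$. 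In particular, no subsequence of $(e_n)$ can remain in a compact subset of $\Omega$, so every accumulation point of $(e_n)$ in $\overline{\Omega}$ lies in $\partial \Omega$.

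Next I would compute the Gromov product $(e_n | \varphi_n p)_p$ using the same identity $K_\Omega(e_n, \varphi_n p) = K_\Omega(e_n, p)$:
\begin{align*}
2(e_n | \varphi_n p)_p = K_\Omega(e_n, p) + K_\Omega(p, \varphi_n p) - K_\Omega(e_n, \varphi_n p) = K_\Omega(p, \varphi_n p).
\end{align*}
Thus $(e_n | \varphi_n p)_p \to \infty$. Passing to a subsequence along which $e_n \to e \in \partial\Omega$, the diagonal gives $\limsup_{n,m \to \infty}(e_n | \varphi_m p)_p = \infty$, so Theorem~\ref{thm:gromov_prod}(2) yields $T_e^{\Cb}\partial\Omega = T_x^{\Cb}\partial\Omega$. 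Since $e \in T_e^{\Cb}\partial\Omega$, this means $e \in T_x^{\Cb}\partial\Omega$.

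Finally, since every accumulation point of $(e_n)$ in $\overline{\Omega}$ lies in $T_x^{\Cb}\partial\Omega$, a standard compactness argument yields $d_{\Euc}(e_n, T_x^{\Cb}\partial\Omega) \to 0$, proving the lemma. There is no real obstacle here; the main step is the algebraic identity $K_\Omega(e_n, \varphi_n p) = K_\Omega(e_n, p)$, which immediately simplifies the Gromov product and reduces the lemma to a direct application of Theorem~\ref{thm:gromov_prod}.
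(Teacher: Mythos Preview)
Your proof is correct and follows essentially the same approach as the paper: use the isometry relation $K_\Omega(e_n,\varphi_n p)=K_\Omega(e_n,p)$ to show the Gromov product $(e_n\,|\,\varphi_n p)$ diverges, then invoke Theorem~\ref{thm:gromov_prod}. The only differences are cosmetic---you take the basepoint $p$ (which yields the clean identity $2(e_n|\varphi_n p)_p = K_\Omega(p,\varphi_n p)$) rather than an arbitrary $o$, and you spell out explicitly why every accumulation point of $(e_n)$ lies in $\partial\Omega$, a step the paper leaves implicit.
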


\begin{proof} Fix $o \in \Omega$ then
\begin{align*}
2(\varphi_n p|e_n)_o 
&= K_\Omega(\varphi_n p,o)+K_\Omega(e_n,o) - K_{\Omega}(\varphi_n p, e_n) \\
& = K_\Omega(\varphi_n p,o)+K_\Omega(e_n,o) - K_{\Omega}(p, e_n) \\
& \geq K_\Omega(\varphi_n p,o)- K_\Omega(p,o).
\end{align*}
So 
\begin{align*}
\lim_{n \rightarrow \infty} (\varphi_n p|e_n)_o = \infty
\end{align*}
and then Theorem~\ref{thm:gromov_prod} implies the lemma.
\end{proof}

\begin{lemma}\label{lem:elliptic_inverse}
Suppose $\Omega \subset \Cb^d$ is a bounded convex domain with $C^{1,\alpha}$ boundary, $\varphi_n \in \Aut(\Omega)$, and $p \in \Omega$. If $\varphi_n p \rightarrow x \in \partial \Omega$ and each $\varphi_n$ is elliptic, then
\begin{align*}
\lim_{n \rightarrow \infty} d_{\Euc}(\varphi_n q, T_{x}^{\Cb} \partial \Omega) =0 = \lim_{n \rightarrow \infty} d_{\Euc}(\varphi_n^{-1} q, T_{x}^{\Cb} \partial \Omega)
\end{align*}
for all $q \in \Omega$. 
\end{lemma}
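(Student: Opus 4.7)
I break the statement into the forward part, $d_\Euc(\varphi_n q, T_{x}^{\Cb} \partial \Omega) \to 0$, and the inverse part, $d_\Euc(\varphi_n^{-1} q, T_{x}^{\Cb} \partial \Omega) \to 0$. The forward part uses only the fact that $\varphi_n \in \Aut(\Omega)$ is a Kobayashi isometry. Indeed, $K_\Omega(\varphi_n p, \varphi_n q) = K_\Omega(p,q)$ is constant, and combining this with $K_\Omega(\varphi_n p, o) \to \infty$ via the triangle inequality gives $K_\Omega(\varphi_n q, o) \to \infty$. Hence
\begin{align*}
2 (\varphi_n p | \varphi_n q)_o = K_\Omega(\varphi_n p, o) + K_\Omega(\varphi_n q, o) - K_\Omega(p,q) \to \infty.
\end{align*}
Passing to any subsequence with $\varphi_n q \to y \in \overline{\Omega}$ (necessarily $y \in \partial \Omega$ since $K_\Omega(\varphi_n q, o) \to \infty$), Theorem~\ref{thm:gromov_prod} forces $T_{y}^{\Cb} \partial \Omega = T_{x}^{\Cb} \partial \Omega$, so $y \in T_{x}^{\Cb} \partial \Omega$ and the forward claim follows.

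For the inverse part, the plan is to first show that every subsequential limit of $\varphi_n^{-1} p$ in $\overline{\Omega}$ lies on $T_{x}^{\Cb} \partial \Omega$, and then apply the forward-part argument to the sequence $\varphi_n^{-1}$. Choose fixed points $e_n \in \Omega$ for $\varphi_n$; these are also fixed by $\varphi_n^{-1}$, and Lemma~\ref{lem:elliptic_fixed_pt} applied to $\varphi_n$ gives $d_\Euc(e_n, T_{x}^{\Cb} \partial \Omega) \to 0$. Pass to a subsequence with $\varphi_n^{-1} p \to y \in \overline{\Omega}$. The point $y$ must lie in $\partial \Omega$: if $y \in \Omega$, then $K_\Omega(o, \varphi_n^{-1} p) = K_\Omega(\varphi_n o, p)$ would stay bounded, so $\varphi_n o$ would remain in a compact subset of $\Omega$, contradicting $K_\Omega(\varphi_n o, o) \geq K_\Omega(\varphi_n p, o) - K_\Omega(o, p) \to \infty$.

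Now applying Lemma~\ref{lem:elliptic_fixed_pt} to $\varphi_n^{-1}$ with its fixed points $e_n$ and $\varphi_n^{-1} p \to y$ yields $d_\Euc(e_n, T_{y}^{\Cb} \partial \Omega) \to 0$. Taking a further subsequence so $e_n \to e \in \overline{\Omega}$, we obtain $e \in \partial \Omega \cap T_{x}^{\Cb} \partial \Omega \cap T_{y}^{\Cb} \partial \Omega$, which by the remark following Theorem~\ref{thm:main} forces $T_{x}^{\Cb} \partial \Omega = T_{y}^{\Cb} \partial \Omega$; hence $y \in T_{x}^{\Cb} \partial \Omega$. To finish, I repeat the Gromov product computation of the forward part for the sequence $\varphi_n^{-1}$: every subsequential limit of $\varphi_n^{-1} q$ in $\overline{\Omega}$ also lies in $\partial \Omega$ by the same compactness argument just used for $\varphi_n^{-1} p$, and the relation $(\varphi_n^{-1} p | \varphi_n^{-1} q)_o \to \infty$ together with Theorem~\ref{thm:gromov_prod} forces the corresponding limits to share a complex tangent hyperplane, which we have just identified with $T_{x}^{\Cb} \partial \Omega$. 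The main obstacle I expect is the interior-limit-ruling-out step for $\varphi_n^{-1} p$, where both the elliptic hypothesis (needed to invoke Lemma~\ref{lem:elliptic_fixed_pt} for $\varphi_n^{-1}$) and the isometry property of $\varphi_n$ get essentially used.
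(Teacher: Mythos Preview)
Your argument is correct and rests on the same key mechanism as the paper's proof: the fixed points $e_n$ of the elliptic maps $\varphi_n$ are simultaneously pushed toward $T_x^{\Cb}\partial\Omega$ (via $\varphi_n p \to x$) and toward $T_{x'}^{\Cb}\partial\Omega$ (via $\varphi_n^{\pm 1} q \to x'$) by Lemma~\ref{lem:elliptic_fixed_pt}, forcing these hyperplanes to coincide. The organization differs, however. The paper runs a single contradiction argument covering both directions at once by choosing $\delta_k\in\{-1,1\}$ and applying Lemma~\ref{lem:elliptic_fixed_pt} directly with base point $q$; this avoids your intermediate step of first establishing the inverse conclusion for $p$ and then transferring to general $q$ via the Gromov product. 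Your decomposition buys two things: it isolates the observation that the forward direction needs only the isometry property of $\varphi_n$ (no ellipticity), and it makes the interior-limit-ruling-out step for $\varphi_n^{-1}p$ explicit, which the paper's proof leaves implicit when asserting $x'\in\partial\Omega$. The paper's version is shorter; yours is more modular.
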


\begin{proof}
Suppose for a contradiction that there exists $q \in \Omega$, $n_k \rightarrow \infty$, and $\delta_k \in \{-1,1\}$ so that
\begin{align*}
\lim_{k \rightarrow} d_{\Euc}(\varphi_{n_k}^{\delta_k} q, T_{x}^{\Cb} \partial \Omega)  \neq 0.
\end{align*}
After passing to a subsequence we can assume that $\varphi_{n_k}^{\delta_k} q \rightarrow x^\prime \in \partial \Omega$. Then we must have $T_x^{\Cb} \partial \Omega \neq T_{x^\prime}^{\Cb} \partial \Omega$. 

Now if $e_k$ is a fixed point of $\varphi_{n_k}$ we see from Lemma~\ref{lem:elliptic_fixed_pt} that 
\begin{align*}
\lim_{k \rightarrow \infty} d_{\Euc}(e_k, T_{x^\prime}^{\Cb} \partial \Omega) = 0.
\end{align*}
On the other hand, since $\varphi_{n_k} p \rightarrow x$, Lemma~\ref{lem:elliptic_fixed_pt} also implies that 
\begin{align*}
\lim_{k \rightarrow \infty} d_{\Euc}(e_k, T_{x}^{\Cb} \partial \Omega) = 0.
\end{align*}
So 
\begin{align*}
T_x^{\Cb} \partial \Omega \cap T_{x^\prime}^{\Cb} \partial \Omega \cap \partial \Omega \neq \emptyset
\end{align*}
and so $T_x^{\Cb} \partial \Omega = T_{x^\prime}^{\Cb} \partial \Omega$. Thus we have a contradiction.

\end{proof}

\subsection{Uniform attraction}

 \begin{proposition}\label{prop:uniform_att} Suppose $\Omega \subset \Cb^d$ is a bounded convex domain with $C^{1,\alpha}$ boundary, $\varphi_n \in \Aut(\Omega)$, $o \in \Omega$, $\varphi_n o \rightarrow x \in \partial \Omega$, and each $\varphi_n$ is either elliptic or parabolic. If $U \subset \Cb^d$ is a neighborhood of $T_{x}^{\Cb} \partial \Omega \cap \partial \Omega$ then there exists $N \geq 0$ so that 
\begin{align*}
\varphi_n(\Omega \setminus U) \subset U
\end{align*}
for all $n \geq N$.
\end{proposition}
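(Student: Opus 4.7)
The plan is to argue by contradiction. Assume the conclusion fails; then there exist a subsequence $n_k \to \infty$ and points $q_k \in \Omega \setminus U$ with $\varphi_{n_k} q_k \notin U$. By pigeonhole and compactness, I pass to a further subsequence so that every $\varphi_{n_k}$ has a single type (all elliptic or all parabolic), and so that $q_k \to q_\infty \in \overline{\Omega} \setminus U$, $\varphi_{n_k} q_k \to y_\infty \in \overline{\Omega} \setminus U$, and $\varphi_{n_k}^{-1} o \to x'$ for some $x' \in \overline{\Omega}$. Lemma~\ref{lem:elliptic_inverse} or Lemma~\ref{lem:para_inverse} (according to the common type) gives $d_{\Euc}(\varphi_{n_k}^{-1} o, T_x^{\Cb} \partial \Omega) \to 0$, so $x' \in T_x^{\Cb} \partial \Omega \cap \partial \Omega$ (by convexity, since $T_x^{\Cb} \partial \Omega \cap \overline{\Omega} \subset \partial \Omega$); in particular $T_{x'}^{\Cb} \partial \Omega = T_x^{\Cb} \partial \Omega$. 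Because $U$ contains $T_x^{\Cb} \partial \Omega \cap \partial \Omega$ while $q_\infty, y_\infty$ do not, whenever either of these limit points lies on $\partial \Omega$ its complex tangent hyperplane must differ from $T_x^{\Cb} \partial \Omega$.

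The heart of the argument is to show that the two Gromov products
\begin{align*}
\alpha_k := (q_k \mid \varphi_{n_k}^{-1} o)_o \quad \text{and} \quad \beta_k := (\varphi_{n_k} q_k \mid \varphi_{n_k} o)_o
\end{align*}
remain bounded in $k$. For $\alpha_k$: if $q_\infty \in \Omega$ then $K_\Omega(q_k,o)$ is bounded and $\alpha_k \leq K_\Omega(q_k,o)$; if $q_\infty \in \partial \Omega$ then $T_{q_\infty}^{\Cb} \partial \Omega \neq T_{x'}^{\Cb} \partial \Omega$, so applying Theorem~\ref{thm:gromov_prod}(2) (in its contrapositive form) to $q_k \to q_\infty$ and $\varphi_{n_k}^{-1} o \to x'$ yields $\limsup_k \alpha_k < \infty$. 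An entirely analogous dichotomy, using $\varphi_{n_k} o \to x$ together with either $y_\infty \in \Omega$ or $T_{y_\infty}^{\Cb} \partial \Omega \neq T_x^{\Cb} \partial \Omega$, gives boundedness of $\beta_k$. After one more subsequence extraction, I may therefore assume $\alpha_k + \beta_k \leq C$ for all $k$.

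Finally, I exploit that each $\varphi_{n_k}$ is a $K_\Omega$-isometry. Using the identities $K_\Omega(\varphi_{n_k}^{-1} o,o) = K_\Omega(\varphi_{n_k} o, o)$ and $K_\Omega(q_k, \varphi_{n_k}^{-1} o) = K_\Omega(\varphi_{n_k} q_k, o)$, the definitions expand to
\begin{align*}
2\alpha_k &= K_\Omega(q_k,o) + K_\Omega(\varphi_{n_k} o, o) - K_\Omega(\varphi_{n_k} q_k, o), \\
2\beta_k &= K_\Omega(\varphi_{n_k} q_k,o) + K_\Omega(\varphi_{n_k} o, o) - K_\Omega(q_k, o).
\end{align*}
Adding these two identities the variable terms cancel and one obtains the clean relation
\begin{align*}
K_\Omega(\varphi_{n_k} o, o) = \alpha_k + \beta_k,
\end{align*}
whose left-hand side tends to $+\infty$ because $\varphi_{n_k} o \to x \in \partial \Omega$, while the right-hand side is bounded by $C$ — a contradiction. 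The main subtlety I expect is the bookkeeping needed to secure the Gromov-product bounds in the mixed regimes where one of $q_\infty, y_\infty$ lies on $\partial \Omega$ and the other inside $\Omega$; once those bounds are in hand, the telescoping cancellation yielding $K_\Omega(\varphi_{n_k} o, o) = \alpha_k + \beta_k$ is immediate from the isometry property.
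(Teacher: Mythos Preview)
Your argument is correct, and it is genuinely different from (and cleaner than) the paper's proof. The paper also argues by contradiction, but after reducing to the case $q_\infty, y_\infty \in \partial \Omega$ it constructs an auxiliary sequence $q_n \to x$ with $\varphi_n q_n \to x' \in T_x^{\Cb}\partial\Omega$, passes to geodesics $\sigma_n$ joining $p_n$ to $q_n$, and invokes the visibility-type Theorem~\ref{thm:geod_converge} twice to obtain limiting geodesics $\gamma_1,\gamma_2$; the contradiction then comes from a case split on the sign of a time-shift $S_n$ together with a separate Gromov-product computation in each case. Your route bypasses geodesics and Theorem~\ref{thm:geod_converge} entirely: you work directly with the two Gromov products $\alpha_k=(q_k\mid\varphi_{n_k}^{-1}o)_o$ and $\beta_k=(\varphi_{n_k}q_k\mid\varphi_{n_k}o)_o$, bound each one either trivially (interior limit) or via the contrapositive of Theorem~\ref{thm:gromov_prod}(2) (boundary limit with distinct complex tangent hyperplanes), and then observe the exact identity $\alpha_k+\beta_k=K_\Omega(\varphi_{n_k}o,o)$ coming from the isometry relations. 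This is both shorter and more elementary; the only ingredients beyond Theorem~\ref{thm:gromov_prod} are Lemmas~\ref{lem:para_inverse} and~\ref{lem:elliptic_inverse} to locate $x'$ in $T_x^{\Cb}\partial\Omega\cap\partial\Omega$. Two minor remarks: the pigeonhole step forcing all $\varphi_{n_k}$ to a single type is harmless but unnecessary, since the two lemmas give the same conclusion; and the ``one more subsequence extraction'' is likewise unnecessary, since you already have $\limsup_k(\alpha_k+\beta_k)<\infty$ on the current subsequence.
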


\begin{proof}
Since each $\varphi_n$ is either elliptic or parabolic, Lemma~\ref{lem:para_inverse} and Lemma~\ref{lem:elliptic_inverse} imply that
\begin{align}
\label{eq:attractive}
\lim_{n \rightarrow \infty} d_{\Euc}(\varphi_n q, T_{x}^{\Cb} \partial \Omega) = 0 = \lim_{n \rightarrow \infty} d_{\Euc}(\varphi_n^{-1} q, T_{x}^{\Cb} \partial \Omega)
\end{align}
for every $q \in \Omega$. 

Now suppose for a contradiction that the proposition does not hold. Then after passing to a subsequence there exists $p_n \in \Omega \setminus U$ so that $\varphi_n p_n \notin U$. By passing to another subsequence we can suppose that $p_n \rightarrow y_1\in  \overline{\Omega} \setminus U$ and $\varphi_{n} p_n \rightarrow y_2 \in \overline{\Omega} \setminus U$. We will use Theorem~\ref{thm:geod_converge} to show that this is impossible. 

We first claim that $y_1 \in \partial \Omega$. If not then Equation~\ref{eq:attractive} implies that
\begin{align*}
\lim_{n \rightarrow \infty} d_{\Euc}(\varphi_{n} y_1, T_{x}^{\Cb} \partial \Omega) = 0.
\end{align*}
Since 
\begin{align*}
\lim_{n \rightarrow \infty} K_{\Omega}(\varphi_{n} p_n, \varphi_{n} y_1)=\lim_{n \rightarrow \infty} K_{\Omega}(p_n, y_1) = 0
\end{align*}
we see that
\begin{align*}
\lim_{k \rightarrow \infty} d_{\Euc}(\varphi_{n} p_n, \varphi_{n} y_1)=0
\end{align*}
and so
\begin{align*}
\lim_{k \rightarrow \infty} d_{\Euc}(\varphi_{n} p_n, T_{x}^{\Cb} \partial \Omega) = 0.
\end{align*}
But this contradicts the fact that $\varphi_{n} p_n \notin U$. So $y_1 \in \partial \Omega$. A similar argument shows that $y_2 \in \partial \Omega$. Since $U$ is a neighborhood of $T_x^{\Cb} \partial \Omega \cap \partial \Omega$ we then see that $T_{y_1}^{\Cb} \partial \Omega \neq T_x^{\Cb} \partial \Omega$ and $T_{y_2}^{\Cb} \partial \Omega \neq T_x^{\Cb} \partial \Omega$.

We now claim that after possibly passing to a subsequence of the $\varphi_n$ we can find a sequence $q_n \in \Omega$ so that $q_n \rightarrow x$ and 
\begin{align*}
\varphi_{n} q_n \rightarrow x^\prime \in \partial \Omega \cap T_{x}^{\Cb} \partial \Omega.
\end{align*} 
To see this fix any sequence $q_m \in \Omega$ with $q_m \rightarrow x$. With $m$ fixed we have
\begin{align*}
\liminf_{n \rightarrow \infty} ( \varphi_{n} q_m | \varphi_{n} o)_{o} 
&\geq \liminf_{n \rightarrow \infty} \Big( ( \varphi_{n} o | \varphi_{n} o)_{o} - K_{\Omega}(\varphi_{n} q_m, \varphi_{n} o) \Big)\\
&=  \liminf_{n \rightarrow \infty} \Big( ( \varphi_{n} o | \varphi_{n} o)_{o} - K_{\Omega}(q_m, o)\Big)=\infty.
\end{align*}
So we can find $n_m \rightarrow \infty$ so that
\begin{align*}
\lim_{k \rightarrow \infty} ( \varphi_{n_m} q_m | \varphi_{n_m} o)_{o}=\infty.
\end{align*}
So by Theorem~\ref{thm:gromov_prod} and passing to another subsequence we can suppose that 
\begin{align*}
\varphi_{n_m} q_m \rightarrow x^\prime \in \partial \Omega \cap T_{x}^{\Cb} \partial \Omega.
\end{align*} 
Now replace $\varphi_n$ with the subsequence $\varphi_{n_m}$. 

Now for each $n$, let $\sigma_n :[a_n, b_n] \rightarrow \Omega$ be a geodesic with $\sigma_n(a_n)=p_n$ and $\sigma_n(b_n) = q_n$. Then since 
\begin{align*}
T_{y_1}^{\Cb} \partial \Omega \neq  T_{x}^{\Cb} \partial \Omega,
\end{align*}
using Theorem~\ref{thm:geod_converge} we can parametrize $\sigma_n$ and pass to a subsequence so that $\sigma_n$ converges locally uniformly to a geodesic $\gamma_1: \Rb \rightarrow \Omega$. 

Since $\lim_{n \rightarrow \infty} \varphi_{n} \sigma_n(a_n) = y_2$, $\lim_{k \rightarrow \infty} \varphi_{n} \sigma_n(b_n) = x^\prime$, and
\begin{align*}
T_{y_2}^{\Cb} \partial \Omega \neq  T_x^{\Cb} \partial \Omega = T_{x^\prime}^{\Cb} \partial \Omega
\end{align*}
by passing to another subsequence we can find $S_n \in [a_n, b_n]$ so that the geodesics $t \rightarrow \varphi_{n} \sigma_n(t+S_n)$ converge locally uniformly to a geodesic $\gamma_2: \Rb \rightarrow \Omega$. 

Now by Equation~\ref{eq:attractive}
\begin{align*}
\lim_{n \rightarrow \infty} \abs{S_n} = \infty.
\end{align*}
So after passing to a subsequence we have two cases: \newline

\noindent \textbf{Case 1:} $\lim_{n \rightarrow \infty} S_n = \infty$. Then 
\begin{align*}
(\varphi_{n} \gamma_1(0) | \varphi_n p_n)_{\gamma_2(0)}
& \geq (\varphi_n \sigma_n(0) | \varphi_n p_n)_{\varphi_n \sigma_n(S_n)} \\
& \quad - K_{\Omega}( \varphi_{n} \gamma_1(0), \varphi_n \sigma_n(0)) - K_{\Omega}(\gamma_2(0), \varphi_n \sigma_n(S_n)).
\end{align*}
Since $\varphi_n \sigma_n$ is a geodesic, $\varphi_n \sigma_n(a_n) = \varphi_n p_n$, and $a_n \leq 0 \leq S_n$ we have
\begin{align*}
(\varphi_n \sigma_n(0) | \varphi_n p_n)_{\varphi_n \sigma_n(S_n)} = S_n
\end{align*}
and so
\begin{align*}
(\varphi_{n} \gamma_1(0) | \varphi_n p_n)_{\gamma_2(0)} \geq S_n - K_{\Omega}(\gamma_1(0), \sigma_n(0)) - K_{\Omega}(\gamma_2(0), \varphi_n \sigma_n(S_n)).
\end{align*}
Thus 
\begin{align*}
\lim_{n \rightarrow \infty} (\varphi_{n} \gamma_1(0) | \varphi_n p_n)_{\gamma_2(0)} = \infty
\end{align*}
and then by Theorem~\ref{thm:gromov_prod} we have
\begin{align*}
\lim_{n \rightarrow \infty} d_{\Euc}(\varphi_{n} \gamma_1(0), T_{y_2}^{\Cb} \partial \Omega) = 0.
\end{align*}
But this contradicts Equation~\ref{eq:attractive}. \newline

\noindent \textbf{Case 2:} $\lim_{n \rightarrow \infty} S_n = -\infty$. Then 
\begin{align*}
(\varphi_{n}^{-1} \gamma_2(0) | p_n)_{\gamma_1(0)}
& \geq (\sigma_n(S_n) | p_n )_{\sigma_n(0)} - K_{\Omega}( \varphi_{n}^{-1} \gamma_2(0), \sigma_n(S_n)) - K_{\Omega}(\gamma_1(0), \sigma_n(0)). 
\end{align*}
Since $\sigma_n$ is a geodesic, $\sigma_n(a_n)=p_n$, and $a_n \leq S_n \leq 0$ we have
\begin{align*}
(\sigma_n(S_n) | p_n )_{\sigma_n(0)} = -S_n
\end{align*}
and so 
\begin{align*}
(\varphi_{n}^{-1} \gamma_2(0) | p_n)_{\gamma_1(0)} \geq -S_n - K_{\Omega}(\gamma_2(0), \varphi_n\sigma_n(S_n)) - K_{\Omega}(\gamma_1(0), \sigma_n(0)).
\end{align*}
Thus 
\begin{align*}
\lim_{n \rightarrow \infty} (\varphi_{n}^{-1} \gamma_2(0) | p_n)_{\gamma_1(0)} = \infty
\end{align*}
and then by Theorem~\ref{thm:gromov_prod} we have
\begin{align*}
\lim_{n \rightarrow \infty} d_{\Euc}(\varphi_{n}^{-1} \gamma_2(0), T_{y_1}^{\Cb} \partial \Omega) = 0.
\end{align*}
But this contradicts Equation~\ref{eq:attractive}.
\end{proof}

\subsection{Finding hyperbolic automorphisms}

\begin{lemma}\label{lem:dual_cond}
Suppose $\Omega \subset \Cb^d$ is a bounded convex domain with $C^{1,\alpha}$ boundary, $\varphi_n \in \Aut(\Omega)$, and $p \in \Omega$. If $\varphi_n p \rightarrow x^+ \in \partial \Omega$, $\varphi_n^{-1} p \rightarrow x^- \in \partial \Omega$, and $T_{x^+}^{\Cb} \partial \Omega \neq T_{x^-}^{\Cb} \partial \Omega$, then $\varphi_n$ is hyperbolic for large $n$ and $H_{\varphi_n}^\pm \rightarrow T_{x^{\pm}}^{\Cb} \partial \Omega$. 
\end{lemma}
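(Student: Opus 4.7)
The plan is to argue by contradiction that all but finitely many $\varphi_n$ are hyperbolic, then invoke Lemma~\ref{lem:cont_att} for the convergence of hyperplanes. The underlying structural fact I will exploit is that the two closed complex faces $F^\pm := T_{x^\pm}^{\Cb}\partial\Omega \cap \partial\Omega$ are disjoint: a common point $y$ would satisfy $T_y^{\Cb}\partial\Omega = T_{x^+}^{\Cb}\partial\Omega$ and also $T_y^{\Cb}\partial\Omega = T_{x^-}^{\Cb}\partial\Omega$, contradicting the hypothesis. Furthermore, since a supporting complex hyperplane of a convex domain does not meet its interior, $T_{x^\pm}^{\Cb}\partial\Omega \cap \overline{\Omega} = F^\pm$.

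Suppose for contradiction that infinitely many $\varphi_n$ fail to be hyperbolic; after passing to a subsequence, either each $\varphi_n$ is parabolic or each $\varphi_n$ is elliptic. In the parabolic case, the definition gives $H^+_{\varphi_n^{-1}} = H^+_{\varphi_n}$, and since $\varphi_n^{-1}$ is also parabolic (hence non-elliptic), Lemma~\ref{lem:cont_att} applied to both sequences yields
\begin{align*}
T_{x^+}^{\Cb}\partial\Omega = \lim_{n\to\infty} H^+_{\varphi_n} = \lim_{n\to\infty} H^+_{\varphi_n^{-1}} = T_{x^-}^{\Cb}\partial\Omega,
\end{align*}
a contradiction. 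In the elliptic case, let $e_n \in \Omega$ be a fixed point of $\varphi_n$; it is also fixed by $\varphi_n^{-1}$. Applying Lemma~\ref{lem:elliptic_fixed_pt} to the sequences $(\varphi_n)$ and $(\varphi_n^{-1})$ gives $d_{\Euc}(e_n, T_{x^+}^{\Cb}\partial\Omega) \to 0$ and $d_{\Euc}(e_n, T_{x^-}^{\Cb}\partial\Omega) \to 0$, so by compactness any subsequential limit of $(e_n)$ in $\overline{\Omega}$ lies in $F^+ \cap F^- = \emptyset$, again a contradiction.

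Therefore $\varphi_n$ is hyperbolic for all sufficiently large $n$. For such $n$ both $\varphi_n$ and $\varphi_n^{-1}$ are non-elliptic, so Lemma~\ref{lem:cont_att} applies directly and yields $H^+_{\varphi_n} \to T_{x^+}^{\Cb}\partial\Omega$ and $H^-_{\varphi_n} = H^+_{\varphi_n^{-1}} \to T_{x^-}^{\Cb}\partial\Omega$. I do not anticipate a serious obstacle: the analytic content is already packaged in Lemmas~\ref{lem:cont_att} and~\ref{lem:elliptic_fixed_pt}, and the only genuine step is the case split driven by the disjointness of $F^+$ and $F^-$.
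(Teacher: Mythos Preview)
Your argument is correct and follows essentially the same route as the paper. The paper's one-line proof invokes Lemma~\ref{lem:para_inverse} and Lemma~\ref{lem:elliptic_inverse} (which say that for parabolic or elliptic sequences the inverse orbit also accumulates at $T_{x^+}^{\Cb}\partial\Omega$, directly contradicting $\varphi_n^{-1}p \to x^-$), whereas you unpack those lemmas and argue via Lemma~\ref{lem:cont_att} and Lemma~\ref{lem:elliptic_fixed_pt}; since the former pair are themselves proved from the latter, the content is identical.
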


\begin{proof} Lemma~\ref{lem:para_inverse} and Lemma~\ref{lem:elliptic_inverse} imply that $\varphi_n$ is hyperbolic for large $n$ and Lemma~\ref{lem:cont_att} implies that $H_{\varphi_n}^\pm \rightarrow T_{x^{\pm}}^{\Cb} \partial \Omega$. 
\end{proof}

 \begin{proof}[Proof of Theorem~\ref{thm:exist_hyp}] Clearly if $\Aut(\Omega)$ contains a hyperbolic element then there exists $x,y \in \Lc(\Omega)$ so that $T_x^{\Cb} \partial \Omega \neq T_y^{\Cb} \partial \Omega$. 
 
 Next suppose that there exists $x^+,x^- \in \Lc(\Omega)$ so that $T_{x^+}^{\Cb} \partial \Omega \neq T_{x^-}^{\Cb} \partial \Omega$. Pick $\varphi_n, \phi_m \in \Aut(\Omega)$ and $p, q \in \Omega$ so that $\varphi_n p \rightarrow x^+$ and $\phi_m q \rightarrow x^-$. It is enough to consider the case where none of the $\varphi_n$ or $\phi_m$ are hyperbolic. 
 
 Pick relatively compact neighborhoods $U^{\pm} \subset \Cb^d$ of $T_{x^\pm}^{\Cb} \partial \Omega \cap \partial \Omega$ so that $\overline{U^+} \cap \overline{U^-} = \emptyset$. By Proposition~\ref{prop:uniform_att} there exists $m,n \geq 1$ so that $\varphi_n^{-1}(p) \in U^+$, $\varphi_n( U^- \cap \Omega) \subset U^+$,  $\phi_m^{-1}(p) \in U^-$, and $\phi_m( U^+ \cap \Omega) \subset U^-$. Now if $\gamma = \phi_m \varphi_n^{-1}$ we see that $\gamma(p) \in \phi_m(U^+ \cap \Omega) \subset U^-$ and $\gamma^{-1}(p) \in \varphi_n(U^- \cap \Omega) \subset U^+$. 
 
 But $U^+$ and $U^-$ were arbitrary relatively compact neighborhoods of $x^+$ and $x^-$ such that $\overline{U^+} \cap \overline{U^-} = \emptyset$, so we can find $m_k, n_k \rightarrow \infty$ so that if $\gamma_k = \phi_{m_k} \varphi_{n_k}^{-1}$ then $\gamma_k p \rightarrow x^-$ and $\gamma_k^{-1} p \rightarrow x^+$. Then for $k$ large $\gamma_k$ is hyperbolic by Lemma~\ref{lem:dual_cond}.
 \end{proof}
 
 \section{The behavior of hyperbolic elements}\label{sec:axial_autos}

In a non-positively curved metric spaces a hyperbolic isometry always translates a geodesic (see for instance~\cite[Chapter II.6 Theorem 6.8]{BH1999}). We now show that a similar phenomena holds for hyperbolic automorphisms. 

\begin{theorem}\label{thm:hyp_unif_conv}
Suppose $\Omega \subset \Cb^d$ is a bounded convex domain with $C^{1,\alpha}$ boundary and $\varphi \in \Aut(\Omega)$. If $\varphi$ is hyperbolic, then there exists $x^\pm \in H_{\varphi}^\pm$, $M \geq 0$, and $T \in \Rb$ so that 
\begin{align*}
\{ x^+ + e^{-2t} n_{x^+} : t > T \} \cup \{ x^- + e^{-2t} n_{x^-} : t > T \}  \subset \cup_{k \in \Zb} \ \varphi^k B_{\Omega}(o; M)
\end{align*}
where 
\begin{align*}
B_{\Omega}(o; M) = \{ p \in \Omega : K_\Omega(p,o) \leq M\}.
\end{align*}
\end{theorem}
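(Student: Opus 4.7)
The plan is to construct a bi-infinite geodesic axis for $\varphi$ and show that its ends track the inward normal rays at suitable boundary points $x^{\pm} \in H_\varphi^{\pm}$. First, I pass to a subsequence so that $\varphi^{n_k} o \to x^+$ and $\varphi^{-n_k} o \to x^-$ in $\partial\Omega$. Theorem~\ref{thm:wolf} forces $x^{\pm} \in H_\varphi^{\pm} \cap \partial\Omega$, and because $H_\varphi^{\pm}$ is a complex hyperplane supporting $\Omega$ at $x^{\pm}$ while $\partial\Omega$ is of class $C^1$, the complex tangent hyperplane satisfies $T_{x^{\pm}}^{\Cb} \partial\Omega = H_\varphi^{\pm}$, and these are distinct by hyperbolicity of $\varphi$. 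Applying Theorem~\ref{thm:geod_converge} to a sequence of Kobayashi geodesics $\sigma_k : [a_k, b_k] \to \Omega$ from $\varphi^{-n_k} o$ to $\varphi^{n_k} o$ produces, after reparametrization and a further subsequence, a bi-infinite geodesic $\gamma : \Rb \to \Omega$ with $d_{\Euc}(\gamma(t), H_\varphi^{\pm}) \to 0$ as $t \to \pm\infty$.

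Next I would prove that $\varphi$ approximately translates $\gamma$. Since $\varphi$ is a Kobayashi isometry, $\varphi \circ \gamma$ is another bi-infinite geodesic whose ends accumulate in $H_\varphi^{\pm} \cap \partial\Omega$. Combining Theorem~\ref{thm:gromov_prod} with the complementary estimates of Lemma~\ref{lem:dist} (lower bound) and Proposition~\ref{prop:rough_geodesics} (upper bound), I would extract a bounded-Hausdorff-distance statement for bi-infinite geodesics whose ends lie in the two distinct complex faces $H_\varphi^{\pm} \cap \partial\Omega$: the Kobayashi length of such a geodesic segment with endpoints at Euclidean distance $\epsilon$ from the two hyperplanes is $\log(1/\epsilon) + O(1)$, so two such geodesics must shadow each other up to a bounded error. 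This yields $\tau > 0$ and $D \geq 0$ with $K_\Omega(\varphi\gamma(t), \gamma(t + \tau)) \leq D$ for all $t \in \Rb$.

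The key geometric step is to show that $\gamma$ stays within bounded Kobayashi distance of the inward normal almost-geodesics $\sigma_{x^{\pm}}$ from Proposition~\ref{prop:rough_geodesics}. The sharp rate at which $\sigma_{x^+}$ approaches $H_\varphi^+$, namely $d_{\Euc}(H_\varphi^+, \sigma_{x^+}(t)) = \epsilon e^{-2t}$, is matched by the lower bound from Lemma~\ref{lem:hyperplanes}, which gives $d_{\Euc}(H_\varphi^+, p) \geq c \, e^{-2 K_\Omega(o, p)}$ for $p \in \Omega$. Since $\gamma$ is a geodesic, $K_\Omega(o, \gamma(t)) = t + O(1)$, so both $\gamma(t)$ and $\sigma_{x^+}(t)$ lie at Euclidean distance of order $e^{-2t}$ from $H_\varphi^+$. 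Combined with Theorem~\ref{thm:gromov_prod}(1), which forces $(\gamma(t) \mid \sigma_{x^+}(s))_o \to \infty$ as $s, t \to \infty$, these estimates produce a constant $C \geq 0$ and a shift $t_0 \in \Rb$ with $K_\Omega(\gamma(t), \sigma_{x^+}(t + t_0)) \leq C$ for $t$ sufficiently large, with an analogous bound at the backward end.

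Finally, choose $M$ large enough so that the compact ball $B_\Omega(o; M)$ contains $\gamma([0, \tau])$ as well as the $D$- and $C$-neighborhoods from the preceding steps. Then the iterates $\varphi^k B_\Omega(o; M)$ cover the image of $\gamma$, and hence contain $\sigma_{x^{\pm}}(t)$ for all $t$ beyond some threshold $T$. The main obstacle is the ``Morse lemma''-type statements needed in the second and third paragraphs: two almost-geodesics asymptotic to a common boundary point (or sharing a pair of boundary endpoints with distinct complex tangent hyperplanes) must remain within bounded Kobayashi distance. This is standard in Gromov hyperbolic spaces but must be recovered here from the specific upper and lower bounds established in earlier sections, since smoothly bounded convex domains need not be Gromov hyperbolic.
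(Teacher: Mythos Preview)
Your route is genuinely different from the paper's, and the obstacle you flag at the end is real and unresolved. The paper avoids it entirely by reversing the order of construction.

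Rather than first producing a limit geodesic $\gamma$ and then trying to prove it shadows the normal rays $\sigma_{x^{\pm}}$, the paper \emph{starts} with the normal rays. It picks $x^{\pm} \in \Lc(\Omega,\varphi^{\pm 1})$, concatenates $\sigma_{x^-}$, a finite geodesic segment, and $\sigma_{x^+}$ into a single $K$-almost-geodesic $\sigma:\Rb\to\Omega$ (using Lemma~\ref{lem:dist} to check the almost-geodesic inequality across the junctions), and then proves directly that the orbit $\{\varphi^k o\}$ stays within bounded Kobayashi distance of $\sigma$. The argument for that last step is a clean compactness trick: if $K_\Omega(\varphi^{k_m} o,\sigma)\to\infty$, look at the translates $\varphi^{-k_m}\sigma$. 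Each is still a $K$-almost-geodesic, and because $\sigma(t)\to x^{\pm}\in\Lc(\Omega,\varphi^{\pm 1})$ \emph{as points} (not merely into the face), Lemma~\ref{lem:point_attract} guarantees that the ends of $\varphi^{-k_m}\sigma$ still accumulate on $H_\varphi^{\pm}$. Theorem~\ref{thm:geod_converge} then forces a subsequence of the $\varphi^{-k_m}\sigma$ to converge locally uniformly in $\Omega$, contradicting $K_\Omega(o,\varphi^{-k_m}\sigma)\to\infty$. No Morse-type fellow-traveller statement is ever needed.

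Your plan, by contrast, needs two such statements, and neither follows from what is available. In your Step~4 you invoke Theorem~\ref{thm:gromov_prod}(1) to get $(\gamma(t)\mid\sigma_{x^+}(s))_o\to\infty$, but that part of the theorem requires both sequences to converge to the \emph{same boundary point}, whereas Theorem~\ref{thm:geod_converge} only gives $d_{\Euc}(\gamma(t),H_\varphi^+)\to 0$; the geodesic $\gamma$ may well land at some other point of the face $H_\varphi^+\cap\partial\Omega$. Even granting the Gromov product, the further deduction that $K_\Omega(\gamma(t),\sigma_{x^+}(t+t_0))$ is \emph{bounded} does not follow: large Gromov product together with matching $e^{-2t}$ rates only yields $K_\Omega(\gamma(t),\sigma_{x^+}(t))=o(t)$, not $O(1)$. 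The same issue undermines your Step~3, where you need the ends of $\varphi\gamma$ to land in $H_\varphi^{\pm}$; Lemma~\ref{lem:point_attract} would do this, but it applies only to sequences converging to a point of $\Lc(\Omega,\varphi)$, not to an arbitrary point of the face. The paper's construction sidesteps all of this precisely because $\sigma$ is built so that $\sigma(t)\to x^{\pm}$ on the nose.
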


\begin{remark} Recall from Proposition~\ref{prop:rough_geodesics} that curves of the form $t \rightarrow x+ e^{-2(t+T)}n_x$ are $K$-almost-geodesics, so the above theorem says that a hyperbolic automorphism almost translates an almost-geodesic.\end{remark}

\begin{proof}
Fix points $x^{\pm} \in \Lc(\Omega, \varphi^{\pm 1})$.

By Proposition~\ref{prop:rough_geodesics} there exists $K_0 \geq 1$ and $\epsilon >0$ so that for any $x \in \partial \Omega$ the curve $\sigma_{x}: \Rb_{\geq 0} \rightarrow \Omega$ given by 
\begin{align*}
\sigma_x(t) = x + \epsilon e^{-2t} n_{x}
\end{align*}
is an $K_0$-almost-geodesic. Now by Lemma~\ref{lem:dist} there exists $K_1 \geq 0$ so that 
\begin{align*}
K_{\Omega}(\sigma_{x^+}(t), \sigma_{x^-}(s)) \geq \frac{1}{2} \log \frac{1}{\delta_{\Omega}(\sigma_{x^+}(t))} +  \frac{1}{2} \log \frac{1}{\delta_{\Omega}(\sigma_{x^-}(s))}-K_1 \geq t+s- K_1
\end{align*}
for all $s,t \geq 0$. Next let $\gamma:[0, S] \rightarrow \Omega$ be a unit speed geodesic with $\gamma(0) = \sigma_{x^-}(0)$ and $\gamma(S) = \sigma_{x^+}(0)$.  Then define $\sigma: \Rb \rightarrow \Omega$ by 
\begin{align*}
\sigma(t) = \left\{ \begin{array}{ll} 
 \sigma_{x^-}(-t) & \text{ if } t \leq 0 \\
 \gamma(t) & \text{ if } 0 \leq t \leq S \\
\sigma_{x^+}(t-S) & \text{ if } S\geq t
\end{array} \right.
\end{align*} 
Then $\sigma$ will be an $K$-almost-geodesic for some $K \geq 1$. Moreover by construction 
\begin{align*}
\lim_{t \rightarrow \pm\infty} \sigma(t) = x^{\pm}.
\end{align*}

Now fix $o \in \Omega$. We claim that there exists an $M_0 > 0$ so that  
\begin{align*}
K_{\Omega}(\varphi^k o, \sigma) \leq M_0
\end{align*}
for all $k \in \Zb$. Suppose not, then for every $m \geq 0$ there exists $k_m \in \Zb$ so that 
\begin{align*}
K_{\Omega}(\varphi^{k_m} o, \sigma) \geq m.
\end{align*}
Now for each $m$, $\varphi^{-k_m} \sigma: \Rb \rightarrow \Omega$ is an $K$-almost-geodesic and by Lemma~\ref{lem:point_attract}
\begin{align*}
\lim_{t \rightarrow \pm \infty} d_{\Euc}( \varphi^{-k_m} \sigma(t), H^{\pm}_{\varphi}) = 0.
\end{align*}
Since $H^+_{\varphi} \neq H^-_{\varphi}$, by Theorem~\ref{thm:geod_converge}, we can pass to a subsequence and find $T_m \in \Rb$ so that the $K$-almost-geodesics $t \rightarrow (\varphi^{-k_m} \sigma)(t+T_m)$ converge to an $K$-almost-geodesic $\sigma_{\infty}: \Rb \rightarrow \Omega$. But then 
\begin{align*}
\infty 
&= \lim_{m \rightarrow \infty} K_{\Omega}(\varphi^{k_m} o, \sigma) \leq \lim_{m \rightarrow \infty} K_{\Omega}(\varphi^{k_m} o, \sigma(T_m)) \\
& = \lim_{m \rightarrow \infty} K_{\Omega}(o, \varphi^{-k_m} \sigma(T_m)) = K_{\Omega}(o, \sigma_{\infty}(0))
\end{align*}
which is a contradiction. Thus, there exists an $M_0 > 0$ so that  
\begin{align*}
K_{\Omega}(\varphi^k o, \sigma) \leq M_0
\end{align*}
for all $k \in \Zb$. 

For each $k \in \Zb$ let $t_k \in \Rb$ be such that $K_{\Omega}(\sigma(t_k), \varphi^k o) \leq M_0$. Then 
\begin{align*}
\lim_{k \rightarrow \pm \infty} t_k = \pm \infty
\end{align*}
and 
\begin{align*}
K_{\Omega}(\sigma(t_k), \sigma(t_{k+1})) \leq 2M_0 + K_{\Omega}(\varphi^k o, \varphi^{k+1}o ) = 2M_0 + K_{\Omega}(o, \varphi o ).
\end{align*}
Since 
\begin{align*}
K_{\Omega}(\sigma(t), \sigma(s)) \leq \abs{t-s} + K
\end{align*}
we see that 
\begin{align*}
\sigma(\Rb) \subset \cup_{k \in \Zb} \ \varphi^k B_{\Omega}(o; M)
\end{align*}
when
\begin{align*}
M = 2M_0 +\frac{1}{2} K_{\Omega}(o, \varphi o ) +\frac{3}{2}K.
\end{align*}
\end{proof}

  \section{Finding a limit point of finite type}\label{sec:greene_krantz}
  
  In this section we prove a special case of the Greene-Krantz conjecture: 

 \begin{theorem}\label{thm:unif_tang_conv}
Suppose $\Omega \subset \Cb^d$ is a bounded convex domain with $C^\infty$ boundary. If there exists $o \in \Omega$, $x \in \partial \Omega$, $M \geq 0$, and $T \in \Rb$ so that 
\begin{align*}
\{ x + e^{-2t} n_{x} : t > T \}  \subset \Aut(\Omega) \cdot B_{\Omega}(o; M),
\end{align*}
then $x$ has finite type in the sense of D'Angelo. 
\end{theorem}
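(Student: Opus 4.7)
The plan is to argue by contradiction. Assume $x \in \partial \Omega$ has infinite type in the sense of D'Angelo and construct a holomorphic map $f : \Delta \times \Delta \to \Omega$ inducing an isometric embedding of $(\Delta \times \Delta, K_{\Delta \times \Delta})$ into $(\Omega, K_\Omega)$; Theorem~\ref{thm:prod_map_i} forbids such an $f$, yielding the desired contradiction.

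First, I unpack the hypothesis. For each $t > T$ choose $\varphi_t \in \Aut(\Omega)$ and $p_t \in B_\Omega(o; M)$ with $\varphi_t(p_t) = x + e^{-2t} n_x$. By Proposition~\ref{prop:rough_geodesics}, the normal curve $\gamma(t) = x + \epsilon e^{-2t} n_x$ is an almost-geodesic, so the $\varphi_t$ behave like a one-parameter family of hyperbolic translations along $\gamma$, and the Kobayashi distance from $p_t$ to $o$ is bounded uniformly in $t$. Passing to a subsequence $t_k \to \infty$ we may assume $p_{t_k} \to p_\infty \in \overline{B_\Omega(o;M)} \subset \Omega$, and that the $\varphi_{t_k}$ realize arbitrarily long Kobayashi displacements from $o$ toward $x$.

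Next, I use infinite type to perform a rescaling. Choose $\Cb$-affine coordinates so that $x = 0$, $n_x = e_1$, and $T_x^{\Cb} \partial \Omega = \{z_1 = 0\}$; near $0$ the domain has a convex defining function of the form $\Real(z_1) - \rho(\Imag z_1, z_2, \ldots, z_d)$. Since $x$ has infinite D'Angelo type, for every integer $N$ there exists a unit complex tangential direction $v_N$ along which $\rho(0, \zeta v_N) = O(\abs{\zeta}^N)$. Let $D_k$ be the $\Cb$-linear dilation that multiplies the normal direction by $e^{2t_k}$ and the $v_{N_k}$-direction by $e^{2t_k/N_k}$ for a sequence $N_k \to \infty$ chosen to grow slowly relative to $t_k$. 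Then $D_k(\Omega)$ converges in the local Hausdorff sense to a product-like model biholomorphic to $\Hc \times \Cb$, while $D_k(\gamma(t_k))$ remains at a fixed compact base point. Standard normal-family arguments then produce holomorphic maps $h_k : \Delta \times \Delta \to D_k(\Omega)$ whose pullbacks $f_k := \varphi_{t_k}^{-1} \circ D_k^{-1} \circ h_k$ form a normal family anchored on the compact set $\overline{B_\Omega(o;M)} \subset \Omega$. A locally uniform limit $f : \Delta \times \Delta \to \Omega$ then exists. Because each $\varphi_{t_k}$ is a Kobayashi isometry and the convex rescaling $D_k$ is asymptotically an isometry on the exhausting subdomains (this is the convex rescaling principle used repeatedly in the paper), the map $f$ is a Kobayashi isometric embedding, contradicting Theorem~\ref{thm:prod_map_i}.

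The main obstacle is the rescaling step: one must choose the tangential dilation rates $e^{2t_k/N_k}$ so that (i) $D_k(\Omega)$ admits a well-defined Hausdorff limit, (ii) the rescaling is compatible with the translation action of $\varphi_{t_k}$ up to a bounded Kobayashi error, and (iii) the Kobayashi metrics on $D_k(\Omega)$ converge to the Kobayashi metric on the limit product model. Infinite D'Angelo type is precisely what makes (iii) possible: it permits the tangential dilation to be taken arbitrarily weaker than the normal one while still producing unbounded tangential extent in the limit domain, yielding a genuine second $\Delta$ factor rather than just a single disk.
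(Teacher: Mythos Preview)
Your overall strategy---assume infinite type, rescale along the normal line, obtain a bidisk-like map into (a model biholomorphic to) $\Omega$, and contradict Theorem~\ref{thm:prod_map_i}---matches the paper's. But there are two genuine gaps in your execution.

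\textbf{The limit domain cannot be $\Hc \times \Cb$.} With your choice of tangential dilation $e^{2t_k/N_k}$ and $N_k \to \infty$, the tangential slice in the limit is all of $\Cb$, so the limit domain contains a complex affine line and is not $\Cb$-proper. Its Kobayashi metric is then degenerate in the second factor, your item~(iii) fails, Frankel's rescaling theorem (Theorem~\ref{thm:rescale}) does not apply, and no nontrivial isometric information survives the limit. The paper avoids this by a much more delicate choice of rescaling (Lemma~\ref{lem:infinite_type_rescale}): one selects $z_n \to 0$ with $f(0,z_n) = a_n|z_n|^n$ realizing the supremum of $f(0,\cdot)/|\cdot|^n$ on a small ball, and rescales tangentially by $z_n^{-1}$ and normally by $f(0,z_n)^{-1}$. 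This calibration forces the tangential slice of the limit to contain $\Delta$ but to have $(i,1)$ on its boundary, so the limit $\wh{\Omega}$ is $\Cb$-proper while still containing $\Hc \times \Delta$. This is the step that genuinely uses infinite type, and it is not the naive ``let the tangential rate go to zero'' that you propose.

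\textbf{The resulting map is not an isometric embedding.} Even with the correct rescaling, Proposition~\ref{prop:infinite_type_poly_disk} only yields
\[
|t-s| - \log\sqrt{2} \leq K_{\wh\Omega}\big(f(0,\tanh t), f(0,\tanh s)\big) \leq |t-s|,
\]
an almost-geodesic in the second factor, not a geodesic. So Theorem~\ref{thm:prod_map_i} is too weak; you need the quantitative version Theorem~\ref{thm:prod_map}, which allows an additive error $\kappa$ in both factors. Finally, the hypothesis on $\Aut(\Omega)$ enters exactly once: since the basepoints $x + e^{-t_n}n_x$ lie in $\Aut(\Omega)\cdot K$ for a fixed compact $K$, Frankel's Theorem~\ref{thm:rescale} gives a biholomorphism $\wh{\Omega} \to \Omega$, transporting $f$ to a map into $\Omega$ and closing the contradiction. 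Your composition $\varphi_{t_k}^{-1}\circ D_k^{-1}\circ h_k$ is essentially how Frankel's theorem is proved, but you should invoke it directly rather than sketch it.
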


The proof has two main steps: we first show that if $x$ had infinite type then we could use a rescaling argument to construct a holomorphic map $f:\Delta \times \Delta \rightarrow \Omega$ which is very close to being an isometric embedding (with respect to the Kobayashi metrics). The second step is to use the behavior of the Gromov product to show that such a holomorphic map cannot exist. 

\subsection{Rescaling} Motivated by language from real projective geometry (see for instance~\cite{B2008}), we say a convex set $\Omega \subset \Cb^d$ is \emph{$\Cb$-proper} if $\Omega$ does not contain any complex affine lines. By a theorem of Barth these are exactly the convex subsets of $\Cb^d$ for which the Kobayashi metric is non-degenerate (see Proposition~\ref{prop:completeness} above). 

Next let $\Xb_{d,0}$ be the set of pairs $(\Omega, p)$ where $\Omega \subset \Cb^d$ is an open $\Cb$-proper convex set and $p \in \Omega$. We then write $(\Omega_n, p_n) \rightarrow (\Omega, p)$ if $p_n \rightarrow p$ and $\Omega_n$ converges to $\Omega$ in the local Hausdorff topology. 

Frankel proved the following:

\begin{theorem}\cite[Theorem 5.6]{F1989}\label{thm:rescale} Suppose $\Omega$ is a $\Cb$-proper convex domain, $K \subset \Omega$ a compact subset, and $\varphi_n \in \Aut(\Omega)$. If there exists $k_n \in K$ and complex affine maps $A_n$ so that 
\begin{align*}
A_n(\Omega, \varphi_n k_n) \rightarrow (\wh{\Omega}, p)
\end{align*}
in $\Xb_{d,0}$, then $\Omega$ is biholomorphic to $\wh{\Omega}$. 
\end{theorem}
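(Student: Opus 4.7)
My plan is to establish the biholomorphism by extracting a convergent subsequence from the rescaled automorphisms $f_n = A_n \circ \varphi_n$ and, in parallel, from their inverses, and then verifying that the two limit maps compose to the identity. Throughout, the Kobayashi metric serves as the control mechanism since it is preserved by biholomorphisms and behaves well under local Hausdorff convergence of $\mathbb{C}$-proper convex domains.

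First I would set up the normal family argument for $f_n : \Omega \to A_n(\Omega)$. By hypothesis $f_n(k_n) = A_n(\varphi_n k_n) \to p \in \widehat{\Omega}$, and since $\Omega$ is $\mathbb{C}$-proper we have $K_\Omega(z, k_n) \leq \sup_{k \in K} K_\Omega(z,k) < \infty$ for any fixed $z \in \Omega$. Because $f_n$ is a biholomorphism, $K_{A_n(\Omega)}(f_n(z), f_n(k_n)) = K_\Omega(z, k_n)$. A standard result on Kobayashi metrics of convex domains says that if $A_n(\Omega) \to \widehat{\Omega}$ in the local Hausdorff topology (with $\widehat{\Omega}$ $\mathbb{C}$-proper), then $K_{A_n(\Omega)} \to K_{\widehat{\Omega}}$ locally uniformly. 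Combining this with the fact that Kobayashi balls in $\mathbb{C}$-proper convex domains are relatively compact (another consequence of Proposition~\ref{prop:completeness} applied to $\widehat{\Omega}$), the sequence $\{f_n\}$ is uniformly bounded on compact subsets of $\Omega$. Montel's theorem then produces a subsequence converging locally uniformly to a holomorphic map $f : \Omega \to \overline{\widehat{\Omega}}$; since $f(k) = p \in \widehat{\Omega}$ for $k = \lim k_n$, and since Kobayashi-metric distance estimates are preserved in the limit, $f$ actually maps $\Omega$ into $\widehat{\Omega}$.

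I would then run the symmetric argument for the inverses $g_n = f_n^{-1} : A_n(\Omega) \to \Omega$. The base points behave as $g_n(f_n(k_n)) = k_n \in K$, so $g_n(p_n) \to k \in \Omega$ for any sequence $p_n \in A_n(\Omega)$ converging to $p$. For any compact $L \subset \widehat{\Omega}$, local Hausdorff convergence ensures that $L \subset A_n(\Omega)$ for $n$ large, and the same Kobayashi argument gives $K_\Omega(g_n(w), k_n) = K_{A_n(\Omega)}(w, f_n(k_n))$, which is bounded uniformly for $w \in L$. By Montel (and since $\Omega$ is $\mathbb{C}$-proper, hence Kobayashi-hyperbolic), a further subsequence produces $g : \widehat{\Omega} \to \Omega$ holomorphic. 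Passing to a common subsequence, the identities $g_n \circ f_n = \mathrm{id}_\Omega$ and $f_n \circ g_n = \mathrm{id}_{A_n(\Omega)}$ pass to the limit, yielding $g \circ f = \mathrm{id}_\Omega$ and $f \circ g = \mathrm{id}_{\widehat{\Omega}}$, so $f$ is the desired biholomorphism.

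The main obstacle I expect is the interplay between local Hausdorff convergence of the domains and convergence of Kobayashi metrics: one needs to be careful that the images $f_n(L)$ of a compact $L \subset \Omega$ stay in a fixed compact subset of $\widehat{\Omega}$ and do not drift into regions where $A_n(\Omega)$ fails to approximate $\widehat{\Omega}$. The key technical input resolving this is the fact that for $\mathbb{C}$-proper convex domains, closed Kobayashi balls $\overline{B_{\widehat{\Omega}}(p; R)}$ are compact subsets of $\widehat{\Omega}$, and $K_{A_n(\Omega)}$-balls around $f_n(k_n)$ eventually lie in slightly enlarged $K_{\widehat{\Omega}}$-balls around $p$; this prevents any escape to the boundary and guarantees the limit $f$ is a genuine holomorphic map into $\widehat{\Omega}$ rather than a boundary-valued limit. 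Once that containment is secured, the inversion step and the identification $f \circ g = \mathrm{id}$ are automatic.
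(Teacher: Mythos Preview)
The paper does not prove this statement; it is quoted verbatim as Frankel's result \cite[Theorem 5.6]{F1989} and used as a black box in the rescaling arguments of Sections~\ref{sec:greene_krantz} and~\ref{sec:entire_bd_finite_type}. There is therefore no in-paper proof to compare against.

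That said, your sketch is essentially Frankel's original argument. The key technical input you invoke---that local Hausdorff convergence of $\Cb$-proper convex domains implies local uniform convergence of the associated Kobayashi distances---is exactly what Frankel establishes (via uniform estimates on the infinitesimal metric for convex domains), and once that is in hand the normal-family extraction for both $f_n = A_n \circ \varphi_n$ and $g_n = f_n^{-1}$, together with the passage of $g_n \circ f_n = \Id$ and $f_n \circ g_n = \Id$ to the limit, proceeds just as you describe. Your identification of the main obstacle (controlling where $f_n(L)$ lands relative to the approximating domains) is accurate, and the resolution via compactness of Kobayashi balls in $\Cb$-proper convex domains is the right one. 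So your proposal is correct and matches the standard proof; it simply fills in what the paper leaves as a citation.
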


\subsection{Line type}

Given a function $f: \Cb \rightarrow \Rb$ with $f(0)=0$ let $\nu(f)$ denote the order of vanishing of $f$ at $0$. Suppose that $\Omega = \{ z \in \Cb^d : r(z) < 0\}$ where $r$ is a $C^\infty$ function with $\nabla r \neq 0$ near $\partial \Omega$. We say that a point $x \in \partial \Omega$ has \emph{finite line type $L$} if 
\begin{align*}
\sup \{ \nu( r \circ \ell) | \ell : \Cb \rightarrow \Cb^d \text{ is a non-trivial affine map and $\ell(0)=x$} \} = L.
\end{align*}
Notice that $\nu(r\circ \ell) \geq 2$ if and only if $\ell(\Cb)$ is tangent to $\Omega$. McNeal~\cite{M1992} proved that if $\Omega$ is convex  then $x \in \partial \Omega$ has finite line type if and only if it has finite type in the sense of D'Angelo (also see~\cite{BS1992}). In this paper, we say a convex domain $\Omega$ with $C^\infty$ boundary has \emph{finite line type $L$} if the line type of all $x \in \partial \Omega$ is at most $L$ and this bound is realized at some boundary point.

\subsection{Rescaling at a point of infinite type}

\begin{proposition}\label{prop:infinite_type_poly_disk}
Suppose $\Omega \subset \Cb^d$ is a bounded convex domain with $C^\infty$ boundary and $x \in \partial \Omega$ has infinite line type. Then there exists $t_n \rightarrow \infty$ and complex affine maps $A_n$ with the following properties:
\begin{enumerate}
\item $A_n(\Omega, x+e^{-t_n}n_{x}) \rightarrow (\wh{\Omega}, u)$ in $\Xb_{d,0}$, 
\item there exists a holomorphic map $f: \Delta \times \Delta \rightarrow \wh{\Omega}$ so that 
\begin{align*}
K_{\Delta}(z_1, z_2) = K_{\wh{\Omega}}(f(z_1,0), f(z_2,0))
\end{align*}
for all $z_1, z_2 \in \Delta$ and 
 \begin{align*}
\abs{t-s}-\log \sqrt{2} \leq  K_{\wh{\Omega}}(f(0,\tanh(t)), f(0,\tanh(s))) \leq \abs{t-s}.
\end{align*}
for every $s, t \geq 0$.
\end{enumerate}
 \end{proposition}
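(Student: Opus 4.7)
\emph{Setup and Rescaling.} The plan is to use the infinite line type at $x$ to produce tangent unit vectors $v_N$ of arbitrarily high vanishing order for the defining function, and then to rescale $\Omega$ near $p_N = x + e^{-t_N} n_x$ to a limit $\wh\Omega$ containing a $2$-dimensional affine product of disks through the base point. I would first normalize coordinates so that $x = 0$, $n_x = -e_1$, and locally $\Omega = \{\Real z_1 < -\rho(\Imag z_1, z')\}$ for a smooth convex nonnegative $\rho$ with $\rho(0) = 0$ and $\nabla\rho(0) = 0$. By McNeal's theorem equating line type and D'Angelo type for convex domains, infinite line type at $x$ provides, for each $N \geq 1$, a unit vector $v_N$ in the complex tangent hyperplane $\{z_1 = 0\}$ with $\rho(0, \zeta v_N) = O(|\zeta|^{2N})$; passing to a subsequence, $v_N \to v_\infty$. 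I would then pick $t_N \to \infty$ with $t_N = o(N)$ and let $A_N$ be the composition of the translation moving $p_N$ to $u = 0$ with the linear dilation scaling $e_1$ by $e^{t_N}$ (together with appropriate rescalings in the tangent directions transverse to $v_N$, dictated by the local D'Angelo type, to keep the limit $\Cb$-proper).

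\emph{Convergence and Structure of the Limit.} Standard compactness of pointed convex domains under local Hausdorff convergence, applied to a further subsequence, yields $A_N(\Omega, p_N) \to (\wh\Omega, u)$ in $\Xb_{d,0}$. The rescaled defining inequality on the $2$-dimensional slice $(w_1, \zeta v_N, 0, \ldots)$ reads $\Real w_1 < 1 - e^{t_N}\rho(e^{-t_N}\Imag w_1, \zeta v_N)$, and the vanishing hypothesis $\rho(0, \zeta v_N) = O(|\zeta|^{2N})$ combined with $t_N = o(N)$ forces the right-hand side to $1$ locally uniformly on $\{|\zeta| < 1\}$ and to $-\infty$ on $\{|\zeta| > 1\}$. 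Therefore $\wh\Omega$ contains the $2$-dimensional affine product
\[
\Pi := \{w_1 e_1 + \zeta v_\infty : \Real w_1 < 1,\ |\zeta| < 1\}.
\]
A parallel analysis shows $\wh\Omega \cap \Cb e_1 = \{\Real w_1 < 1\}$ exactly (the ``far boundary'' of $\Omega$ in the normal direction escapes to infinity under the $e^{t_N}$ dilation), and that the complex hyperplane $H := \{z : \langle z, v_\infty\rangle = 1\}$ is disjoint from $\wh\Omega$ after the normalization of the rescaling.

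\emph{Constructing $f$ and Verifying the Metric Estimates.} Let $\varphi_1 : \Delta \to \{w \in \Cb : \Real w < 1\}$ be a biholomorphism with $\varphi_1(0) = 0$ (e.g., $\varphi_1(z) = 2z/(1+z)$), and define
\[
f(z_1, z_2) := \varphi_1(z_1) e_1 + z_2 v_\infty.
\]
Then $f$ maps $\Delta \times \Delta$ into $\Pi \subset \wh\Omega$. For the first factor, $f(\cdot, 0)$ is a Riemann map of $\Delta$ onto the $1$-dimensional convex slice $\wh\Omega \cap \Cb e_1$, hence a complex geodesic of $\wh\Omega$ by Lempert's theorem on convex domains, giving $K_{\wh\Omega}(f(z_1, 0), f(z_2, 0)) = K_\Delta(z_1, z_2)$. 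For the second factor, the upper bound $K_{\wh\Omega}(f(0, \tanh s), f(0, \tanh t)) \leq K_\Delta(\tanh s, \tanh t) = |t - s|$ is immediate from contraction of the holomorphic map $f(0, \cdot)$. For the lower bound, I would apply Lemma~\ref{lem:hyperplanes} to the hyperplane $H$: since $d_{\Euc}(H, f(0, \tanh t)) = 1 - \tanh t$, we obtain
\[
K_{\wh\Omega}(f(0, \tanh s), f(0, \tanh t)) \geq \tfrac{1}{2}\left|\log \tfrac{1 - \tanh s}{1 - \tanh t}\right|,
\]
and a short calculation using $1 - \tanh t = e^{-t}/\cosh t$ together with $\cosh t/\cosh s \geq e^{t-s}/2$ for $t \geq s \geq 0$ bounds the right-hand side below by $|t - s| - \log\sqrt 2$.

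\emph{Main obstacle.} The delicate step will be in setting up the rescaling so that the limit $\wh\Omega$ has exactly the structure described: $\Cb$-proper, with the $e_1$-slice an honest half-plane $\{\Real w_1 < 1\}$ and with $v_\infty$-extent equal to $1$ (so that $H$ is disjoint from $\wh\Omega$ and the tight additive constant $\log\sqrt 2$ is achieved). This requires calibrating the scalings in the tangent directions transverse to $v_N$ according to the local D'Angelo type, preventing complex affine lines from appearing in the limit. Once these normalizations are in place, the remaining arguments are essentially bookkeeping.
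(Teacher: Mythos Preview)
Your rescaling step has a real gap, and it is precisely the one you flag as the ``main obstacle,'' together with a related mis-step you do not flag. The hypothesis $\rho(0,\zeta v_N)=O(|\zeta|^{2N})$ is a statement about the order of vanishing at $\zeta=0$; it gives no control over $\rho(0,\zeta v_N)$ at a \emph{fixed} $|\zeta|<1$ as $N\to\infty$ (the implied constants may blow up), and it certainly does not force the rescaled defining function to $-\infty$ on $\{|\zeta|>1\}$. So your claimed dichotomy on $\{|\zeta|<1\}$ versus $\{|\zeta|>1\}$, and in particular the conclusion that the $v_\infty$-extent of $\wh\Omega$ is exactly $1$ and that the hyperplane $H=\{\langle z,v_\infty\rangle=1\}$ is disjoint from $\wh\Omega$, is not justified by your rescaling. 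Without that, the lower bound $|t-s|-\log\sqrt2$ cannot be obtained.

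The paper handles both issues differently. First, it uses Frankel's slice lemma (Lemma~\ref{lem:slices}) to reduce to a two-dimensional slice through $n_x$ and an infinite-type tangent direction; this entirely sidesteps your problem of calibrating the transverse rescalings to keep the limit $\Cb$-proper. Second, in that two-dimensional slice (Lemma~\ref{lem:infinite_type_rescale}) it does \emph{not} keep the tangent coordinate fixed while sending $N\to\infty$. Instead it chooses, for each $n$, points $z_n\to 0$ with $f(0,z_n)=a_n|z_n|^n$ and $f(0,w)\le a_n|w|^n$ for $|w|\le|z_n|$, and rescales the tangent direction by $z_n^{-1}$ and the normal direction by $f(0,z_n)^{-1}$. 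This normalization forces the rescaled function to satisfy $f_n(0,w)\le|w|^n$ on $|w|\le1$ and $f_n(0,1)=1$, which is exactly what pins the limit disk radius at $1$ and puts $(i,1)$ on $\partial\wh\Omega$. (There is also a separate, easier ``Case~0'' when $f(0,z)\equiv0$ near $0$.)

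A minor point: your appeal to Lempert's theorem for the isometry $K_{\wh\Omega}(f(z_1,0),f(z_2,0))=K_\Delta(z_1,z_2)$ is not quite the right citation. A Riemann map onto a one-dimensional slice of a convex domain is not automatically a complex geodesic; what you need is a holomorphic retraction from $\wh\Omega$ onto the slice. That exists here because $\wh\Omega\subset\{\Imaginary z_1>0\}$ and the $e_1$-slice equals $\Hc\times\{0\}$, so the coordinate projection $z\mapsto z_1$ works. This is the content of Lemma~\ref{lem:half_plane_slice} in the paper.
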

 
Proposition~\ref{prop:infinite_type_poly_disk} will follow from a series of lemmas, the first of which is due to Frankel. 

\begin{lemma}\cite[Theorem 9.3]{F1991}\label{lem:slices}
Suppose $\Omega \subset \Cb^d$ is a $\Cb$-proper convex domain. If $V \subset \Cb^d$ is a complex affine $k$-dimensional subspace intersecting $\Omega$, $p_n \in V \cap \Omega$, and $A_n \in \Aff(V)$ is a sequence of complex affine maps such that 
\begin{align*}
A_n(\Omega \cap V, p_n) \rightarrow (\wh{\Omega}_V, u) \text{ in } \Xb_{k,0},
\end{align*}
then there exists complex affine maps $B_n \in \Aff(\Cb^d)$ such that 
\begin{align*}
B_n(\Omega, p_n) \rightarrow (\wh{\Omega}, u) \text{ in } \Xb_{d,0}
\end{align*}
and $\wh{\Omega} \cap V = \wh{\Omega}_V$. 
\end{lemma}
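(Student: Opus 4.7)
\emph{Proof plan.} My plan is to construct each $B_n$ as an affine extension of $A_n$ coupled with a judicious rescaling in a direction transverse to $V$. After translating so that $u = 0$, I identify $V$ with a complex linear subspace of $\Cb^d$ and fix a complex linear complement $W$, so that $\Cb^d = V \oplus W$. I would then look for $B_n$ in the restricted form
\begin{align*}
B_n(v + w) = A_n(v) + N_n(w), \quad N_n \in \GL(W).
\end{align*}
Any such $B_n$ sends $p_n = (p_n, 0)$ to $0 = u$ and satisfies $B_n^{-1}(V) = V$, so if $B_n(\Omega) \to \wh{\Omega}$ in the local Hausdorff topology, then
\begin{align*}
\wh{\Omega} \cap V = \lim_{n \to \infty} \left( B_n(\Omega) \cap V \right) = \lim_{n \to \infty} A_n(\Omega \cap V) = \wh{\Omega}_V,
\end{align*}
which gives the intersection condition automatically. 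The task is thus reduced to choosing $N_n \in \GL(W)$ so that $(B_n(\Omega), 0)$ has a subsequential limit lying in $\Xb_{d,0}$.

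To choose $N_n$, I would normalize using the transverse slice $\Sigma_n := \Omega \cap (p_n + W)$, which is a bounded convex body in $p_n + W$ (containing $p_n$) because $\Omega$ is bounded. Let $E_n \subset W$ denote the John ellipsoid of $\Sigma_n - p_n$, and let $N_n \in \GL(W)$ be the linear isomorphism sending $E_n$ onto a fixed Hermitian unit ball $B_W \subset W$. John's theorem then yields the uniform sandwich
\begin{align*}
B_W \subset N_n(\Sigma_n - p_n) \subset k \cdot B_W, \qquad k := \dim_{\Rb} W,
\end{align*}
valid for all $n$. A diagonal Blaschke selection argument applied to the bounded convex sets $B_n(\Omega) \cap \overline{B(0, R)}$ for $R = 1, 2, \ldots$ extracts a subsequence along which $B_n(\Omega)$ converges in the local Hausdorff topology to a convex set $\wh{\Omega} \subset \Cb^d$. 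The slice sandwich forces $B_W \subset \wh{\Omega} \cap W$, which together with $\wh{\Omega} \cap V = \wh{\Omega}_V$ (an open neighborhood of $0$ in $V$) and convexity of $\wh{\Omega}$ shows that $0 \in \wh{\Omega}^\circ$.

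The main obstacle I foresee is verifying that $\wh{\Omega}$ itself is $\Cb$-proper, which is required for membership in $\Xb_{d,0}$: the limit could in principle acquire a complex affine line even though no $B_n(\Omega)$ does. I would handle this through the Kobayashi metric. Since each $B_n$ is a biholomorphism onto its image, the infinitesimal Kobayashi metric satisfies $k_{B_n(\Omega)}(B_n z; dB_n \cdot \xi) = k_{\Omega}(z; \xi)$, and the uniform John-slice normalization together with the boundedness (hence $\Cb$-properness) of $\Omega$ should yield a positive lower bound for $k_{B_n(\Omega)}$ on compact subsets of the eventual limit. Passing to the limit and invoking lower semicontinuity of the Kobayashi metric under local Hausdorff convergence of convex domains then gives a nondegenerate Kobayashi metric on $\wh{\Omega}$, and hence $\Cb$-properness by Barth's theorem (Proposition~\ref{prop:completeness}). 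If the naive ansatz is not by itself enough to rule out complex lines in $\wh{\Omega}$, one can enlarge it to $B_n(v + w) = A_n(v) + L_n(w) + N_n(w)$ with a bounded linear correction $L_n \colon W \to V$, which supplies the additional freedom needed to make the above uniform bounds hold and completes the proof.
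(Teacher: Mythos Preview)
The paper does not give its own proof of this lemma: it is quoted from Frankel~\cite[Theorem 9.3]{F1991} and used as a black box. So there is no ``paper's proof'' to compare against; what follows is an assessment of your argument on its own terms.

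Your outline is the right shape, but it contains one genuine gap. You write that the transverse slice $\Sigma_n = \Omega \cap (p_n + W)$ ``is a bounded convex body in $p_n + W$ (containing $p_n$) because $\Omega$ is bounded,'' and then invoke John's theorem. The hypothesis of the lemma, however, is only that $\Omega$ is $\Cb$-proper, not that it is bounded; $\Cb$-proper convex domains such as $\Hc$, or $\Hc \times \Delta$, are unbounded, and in such examples $\Sigma_n$ is unbounded as well, so the John ellipsoid is undefined and your normalization of $N_n$ collapses. The extra shear $L_n\colon W \to V$ you propose at the end does not address this, since the problem is in the $W$-direction scaling, not in the coupling between $V$ and $W$.

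There is a standard repair that is close in spirit to what Frankel does: replace the Euclidean John ellipsoid of $\Sigma_n - p_n$ by an intrinsic, affinely covariant bounded convex body, for instance the Kobayashi indicatrix $\{w \in W : k_{\Sigma_n}(p_n; w) < 1\}$ (which is open, convex, balanced, and bounded precisely because $\Sigma_n$ is $\Cb$-proper), or equivalently the Kobayashi ball of a fixed radius in $\Sigma_n$. Normalizing \emph{that} object to the unit ball of $W$ gives a well-defined $N_n$ and, because the Kobayashi metric is affinely invariant and varies continuously under local Hausdorff convergence of $\Cb$-proper convex domains, yields exactly the uniform two-sided control you need for both the nonempty-interior and the $\Cb$-properness steps. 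With this substitution your plan goes through. It is also worth noting that in the paper's sole application of the lemma (Proposition~\ref{prop:infinite_type_poly_disk}) the domain $\Omega$ \emph{is} bounded, so your original argument suffices for the use made of it here, even if it does not prove the lemma in the generality stated.
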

 
\begin{lemma}\label{lem:infinite_type_rescale}
Suppose $\Omega \subset \Cb^2$ is a $\Cb$-proper convex domain with $0 \in \partial \Omega$ and 
\begin{align*}
\Omega \cap \Oc = \{(x+iy, z) \in \Oc :  y > f(x, z)\}
\end{align*}
where $\Oc$ is a neighborhood of the origin and $f:(\Rb \times \Cb) \cap \Oc \rightarrow \Rb$ is a convex non-negative function. Assume that 
\begin{align*}
\lim_{z \rightarrow 0} \frac{f(0,z)}{\abs{z}^n} = 0
\end{align*}
for all $n>0$ and the function $t \rightarrow f(t,0)$ is $C^1$ at $t=0$. Then there exists $t_n \rightarrow \infty$ and complex affine maps $A_n$ so that
\begin{align*}
A_n(\Omega, (ie^{-t_n},0)) \rightarrow (\wh{\Omega}, (i,0)) \text{ in } \Xb_{2,0}
\end{align*}
where
\begin{align*} 
\Hc \times \Delta \subset \wh{\Omega} \subset \{ (z_1, z_2) \in \Cb^2: \Imaginary(z_1) > 0\}
\end{align*}
and 
\begin{align*} 
\{ (z,1) : z \in \Cb\} \cap \wh{\Omega} = \emptyset.
\end{align*}
\end{lemma}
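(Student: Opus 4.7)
The proof combines a rescaling argument with the behavior of complex supporting hyperplanes. Let $\psi(s) = \sup_{|z|=s} f(0,z)$; the infinite-order vanishing gives $\psi(s)/s^k \to 0$ for every $k$. I would first show there exists a sequence $s_n \to 0^+$ with $\psi(cs_n)/\psi(s_n) \to 0$ for every $c \in (0,1)$. For fixed $c < 1$, $\liminf_{s \to 0}\psi(cs)/\psi(s) = 0$, since otherwise iteration yields $\psi(c^n s_0) \geq \delta^n \psi(s_0)$, contradicting infinite-order vanishing once $k$ is chosen with $c^k < \delta$. Monotonicity of $\psi$ means the map $c \mapsto \psi(cs)/\psi(s)$ is increasing, so a diagonal argument produces a single sequence $s_n$ working uniformly for all $c < 1$ simultaneously.

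Set $t_n = -\log \psi(s_n)$, choose $z_n$ with $|z_n| = s_n$ and $f(0,z_n) = \psi(s_n) = e^{-t_n}$, and define $A_n(w,z) := (e^{t_n} w,\, z/z_n)$. Then $A_n(ie^{-t_n},0) = (i,0)$ and $A_n(ie^{-t_n}, z_n) = (i,1)$. In rescaled coordinates the local defining inequality for $A_n(\Omega)$ becomes $\Imag w > F_n(\Real w, z)$ with $F_n(x,z) = e^{t_n} f(e^{-t_n} x,\, z_n z)$. I would show $F_n \to 0$ uniformly on compact subsets of $\Rb \times \Delta$: on the axis $\{z=0\}$ this is $F_n(x,0) = x \cdot f(e^{-t_n}x, 0)/(e^{-t_n}x) \to 0$ by the $C^1$ hypothesis together with the fact that $0$ is a minimum of $t \mapsto f(t,0)$, which forces $f(t,0) = o(t)$; on the axis $\{x=0\}$, for $|z| \leq c < 1$, the choice of $s_n$ gives $F_n(0,z) \leq \psi(cs_n)/\psi(s_n) \to 0$; and convexity of $F_n$ yields $F_n(x,z) \leq tF_n(x/t,0) + (1-t)F_n(0, z/(1-t))$ for any $t \in (0,1)$, so choosing $t$ small enough that $|z|/(1-t) < 1$ completes the argument. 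After passing to a subsequence, $A_n(\Omega) \to \wh{\Omega}$ in $\Xb_{2,0}$, giving $\Hc \times \Delta \subset \wh{\Omega}$. The inclusion $\wh{\Omega} \subset \{\Imag w > 0\}$ is automatic from the global real supporting hyperplane $\{\Imag w = 0\}$ at $0 \in \partial\Omega$, which $A_n$ preserves.

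For the final property, that $\wh{\Omega}$ avoids the slice $\{z=1\}$, the key is a complex supporting hyperplane $H_n$ at $p_n = (ie^{-t_n}, z_n) \in \partial\Omega$; such an $H_n$ exists as the unique complex hyperplane inside any real supporting hyperplane at $p_n$. Passing to a further subsequence, $A_n(H_n)$ converges to a complex hyperplane $H$ through $(i,1)$, and $\wh{\Omega} \cap H = \emptyset$ since any interior intersection would, by Hausdorff convergence, force interior intersections $A_n(\Omega) \cap A_n(H_n)$, contradicting the supporting property. Writing $H = \{a(w-i) + b(z-1) = 0\}$ with $(a,b) \neq (0,0)$, the constraint $H \cap (\Hc \times \Delta) = \emptyset$ forces $a = 0$: if $a \neq 0$, taking $w = i + \epsilon\, \overline{a}/|a|^2$ with $\epsilon > 0$ small gives $z = 1 - \epsilon$ with $|z| < 1$ and $\Imag w$ arbitrarily close to $1$, producing a point in $H \cap (\Hc \times \Delta)$. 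Hence $H = \{z = 1\}$, as required.

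The main obstacle is the construction of the sequence $s_n$ and the accompanying convex-interpolation estimate on $F_n$. These must be carried out using only convexity of $f$ together with the $C^1$ and infinite-order vanishing hypotheses along the two coordinate axes — no smoothness of $f$ off those axes is available — so the uniform convergence $F_n \to 0$ on compacts in $\Rb \times \Delta$ has to be extracted purely from these ingredients via the two-slope convex interpolation above.
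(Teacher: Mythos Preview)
Your strategy is sound and in fact quite close to the paper's, but there is one genuine omission: you implicitly assume $\psi(s)>0$ for all small $s$, i.e.\ that $f(0,\cdot)$ is not identically zero in a neighborhood of the origin. When $f(0,z)\equiv 0$ for $|z|<\delta$ the definition $t_n=-\log\psi(s_n)$ breaks down, and the ratio $\psi(cs_n)/\psi(s_n)$ is meaningless. The paper treats this ``Case~0'' separately: one fixes an arbitrary $t_n\to\infty$, chooses $z_n$ so that $(ie^{-t_n},z_n)\in\partial\Omega$ with $|z_n|$ minimal, and rescales by $\operatorname{diag}(e^{t_n},z_n^{-1})$; the flat face $\{0\}\times\{|z|<\delta\}\subset\partial\Omega$ forces $|z_n|\to 1$ after normalizing, and the rest proceeds as in your argument. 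You should insert this case at the outset.

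Apart from that, your approach differs from the paper's in two interesting ways. First, the paper selects $z_n$ by the ``touching $|w|^n$'' condition --- one picks $a_n\searrow 0$ and $z_n$ with $f(0,z_n)=a_n|z_n|^n$ and $f(0,w)\le a_n|w|^n$ for $|w|\le|z_n|$ --- which immediately gives $f_n(0,w)\le|w|^n$ without any diagonalization or convex interpolation. Your route via $\psi(cs_n)/\psi(s_n)\to 0$ plus the convex splitting $F_n(x,z)\le tF_n(x/t,0)+(1-t)F_n(0,z/(1-t))$ is a nice alternative that makes the role of convexity more explicit. Second, for the conclusion $\wh{\Omega}\cap\{z=1\}=\emptyset$ the paper argues via the recession cone: since $\Hc\times\{0\}\subset\wh{\Omega}$ and $(i,1)\in\partial\wh{\Omega}$, convexity forces $\Hc\times\{1\}\subset\partial\wh{\Omega}$, hence the slice $\{z=1\}$ misses $\wh{\Omega}$. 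Your supporting-hyperplane argument is equally valid; just note that in the last step your parametrization is slightly off (with $w=i+\epsilon\,\overline{a}/|a|^2$ one gets $z=1-\epsilon/b$, not $z=1-\epsilon$), though the conclusion that $a=0$ still follows by choosing $\epsilon$ of suitable phase.
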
 

\begin{proof}
The lemma follows from the proof of Proposition 6.1 in~\cite{Z2014} essentially verbatim, but for the readers convenience we will provide the argument.  

We can suppose that $\Oc = (V+iW) \times U$ where $V,W \subset \Rb$ and $U \subset \Cb$ are neighborhoods of $0$. By rescaling we may assume that $B_1(0) \subset U$. \newline

\noindent \textbf{Case 0:} Suppose that there exists $\delta >0$ so that $f(0,z) =0$ for $\abs{z} < \delta$. Then after a linear transformation in the second variable we can assume that 
\begin{align*}
\{0\} \times \Delta \subset F:= \partial \Omega \cap (\{0\} \times \Cb)
\end{align*}
and $(0,1) \in \partial F$. Now fix $t_n \rightarrow \infty$. Pick $z_n \in \Cb$ so that $(ie^{-t_n},z_n) \in \partial \Omega$ and 
\begin{align*}
\abs{z_n} = \inf\{ \abs{z} : (ie^{-t_n}, z) \in \partial \Omega\}.
\end{align*}
Next consider the linear maps 
\begin{align*}
A_n = \begin{pmatrix}
e^{t_n} & 0 \\
0 & z_n^{-1}
\end{pmatrix}.
\end{align*}
We claim that after passing to a subsequence 
\begin{align*}
A_n(\Omega, (ie^{-t_n},0)) \rightarrow (\wh{\Omega}, (i,0)) \text{ in } \Xb_{2,0}
\end{align*}
where $\wh{\Omega} \subset \Cb^2$ is a $\Cb$-proper convex set satisfying the conclusion of the the lemma. By passing to a subsequence we can assume that $\overline{A_n \Omega}$ converges to a closed convex set $C$ in the local Hausdorff topology. By construction 
\begin{align*}
C \subset \{ (z_1, z_2) : \Imaginary(z_1) \geq 0\}.
\end{align*}
Since the function $t \rightarrow f(t,0)$ is $C^1$ at $t=0$ we see that $\Hc \times \{ 0\} \subset C$. Since $(0,1) \in \partial F$ we also see that $\lim_{n \rightarrow \infty} \abs{z_n} = 1$. So 
\begin{align*}
\{0\} \times \Delta \subset  \partial C.
\end{align*}
Then by convexity $\Hc \times \Delta \subset C$. Thus $C$ has non-empty interior. Let $\wh{\Omega}$ be the interior of $C$. Then $A_n \Omega$ converges to $\wh{\Omega}$ in the local Hausdorff topology. By the remarks above 
\begin{align*}
\Hc \times \Delta \subset \wh{\Omega} \subset \{ (z_1, z_2) : \Imaginary(z_1) > 0\}.
\end{align*}

We next claim that 
\begin{align*} 
\wh{\Omega} \cap (\Cb \times \{1 \} )= \emptyset.
\end{align*}
By construction $(0,1) \in \partial \wh{\Omega}$. Then, by convexity, 
\begin{align*}
\Hc \times \{1\} = (0,1) + \Hc \times \{0\}  \subset \wh{\Omega} \cup \partial \wh{\Omega}.
\end{align*}
But since $\wh{\Omega}$ is open and convex either 
\begin{align*}
\Hc\times \{1\} \subset  \wh{\Omega} \text{ or }  \Hc \times \{1\} \subset \partial \wh{\Omega}.
\end{align*}
Since $(i,1) \in \partial \wh{\Omega}$ we must have
\begin{align*}
 \Hc \times \{1\} \subset \partial \wh{\Omega}.
\end{align*}
Which in turn implies that
\begin{align*}
\wh{\Omega} \cap (\Cb \times \{1 \} )= \emptyset.
\end{align*}

We can now show that $\wh{\Omega}$ is $\Cb$-proper. Suppose that an affine map $z \rightarrow (a_1,a_2)z+(b_1,b_2)$ has image in $\wh{\Omega}$. Since 
\begin{align*}
\wh{\Omega} \subset \{ (z_1, z_2) \in \Cb^2 : \Imaginary(z_1) > 0\}
\end{align*}
 we see that $a_1=0$. And since $\wh{\Omega} \cap (\Cb \times \{1 \} )= \emptyset$ we also see that $a_2=0$. So $\wh{\Omega}$ does not contain any non-trivial complex affine lines and hence is $\Cb$-proper. This completes the argument in Case 0. \newline

\noindent \textbf{Case 1:}  Suppose for any $\delta > 0$ there exists $z \in \Cb$ with $\abs{z} < \delta$ and  $f(0,z) \neq 0$. Since 
\begin{align*}
\lim_{z \rightarrow 0} \frac{ f(0,z)}{\abs{z}^n} = 0,
\end{align*}
we can find $a_n \searrow 0$ and $z_n \in B_1(0)$ such that $f(0,z_n) = a_n \abs{z_n}^n$ and for all $w \in \Cb$ with $\abs{w} \leq \abs{z_n}$ we have 
\begin{align*}
f(0,w) \leq a_n \abs{w}^n.
\end{align*}
By the hypothesis of case 1 we see that $z_n \rightarrow 0$ and hence $f(0,z_n) \rightarrow 0$. So by passing to a subsequence we may assume that $\abs{f(0,z_n)} < 1$.

Consider the linear transformations
\begin{align*}
A_n= \begin{pmatrix} \frac{1}{f(0,z_n)}  & 0 \\ 0 & z_n^{-1} \end{pmatrix} \in \GL(\Cb^2)
\end{align*}
and let $\Omega_n = A_n \Omega$. By passing to a subsequence we can assume that $\overline{A_n \Omega}$ converges to a closed convex set $C$ in the local Hausdorff topology. By construction 
\begin{align*}
C \subset \{ (z_1, z_2) : \Imaginary(z_1) \geq 0\}.
\end{align*}
Since the function $t \rightarrow f(t,0)$ is $C^1$ at $t=0$ we see that $\Hc \times \{ 0\} \subset C$. 

We next show that $\{0\} \times \Delta \subset \partial C$. If $\Oc_n = A_n \Oc$ we have
\begin{align*}
 A_n\Omega_n \cap \Oc_n = \{ (x+iy,z ) : x \in V_n, z \in U_n,  y > f_n(x,z)\}
\end{align*}
where $V_n = f(z_n,0)^{-1}V$, $U_n = z_n^{-1}U$, and
\begin{align*}
f_n(x,z) =  \frac{1}{f(0,z_n)} f\left( f(0,z_n)x, z_n z\right).
\end{align*}
For $\abs{w} < 1$ we then have
\begin{align*}
f_n(0,w) =\frac{f\left( 0, z_n w\right)}{f(0, z_n)} \leq \frac{ a_n \abs{z_n}^n \abs{w}^n }{f(0,z_n)} = \abs{w}^n
\end{align*}
which implies that
\begin{align*}
\{0\} \times \Delta \subset \partial C.
\end{align*}

Then by convexity $\Hc \times \Delta \subset C$. Thus $C$ has non-empty interior. Let $\wh{\Omega}$ be the interior of $C$. Then $A_n \Omega$ converges to $\wh{\Omega}$ in the local Hausdorff topology. By the remarks above 
\begin{align*}
\Hc \times \Delta \subset \wh{\Omega} \subset \{ (z_1, z_2) : \Imaginary(z_1) > 0\}.
\end{align*}

Since $f_n(0,1)=1$ we see that $(i,1) \in \partial \Omega_n$ for all $n$ and so $(i,1) \in \partial \wh{\Omega}$. Then following the argument in Case 0 we see that 
\begin{align*}
\wh{\Omega} \cap (\Cb \times \{1 \})= \emptyset
\end{align*}
and $\wh{\Omega}$ is $\Cb$-proper. 
\end{proof}

\begin{lemma}\label{lem:half_plane_slice} Suppose that $\Omega \subset \Cb^d$ is a $\Cb$-proper convex domain. If 
\begin{align*}
\Omega \cap (\Cb \times \{(0, \dots, 0)\}) = \Hc \times \{(0,\dots, 0)\},
\end{align*}
then the map $f: \Hc \rightarrow \Omega$ given by $f(z) = (z,0,\dots, 0)$ induces an isometric embedding $(\Hc, K_{\Hc}) \rightarrow (\Omega, K_\Omega)$. 
\end{lemma}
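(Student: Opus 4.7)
The plan is to sandwich $K_\Omega(f(z_1),f(z_2))$ between $K_\Hc(z_1,z_2)$ and itself by constructing a holomorphic left inverse of $f$. Since $f$ is a holomorphic map from $\Hc$ into $\Omega$, the distance-decreasing property of the Kobayashi metric immediately yields
\begin{align*}
K_\Omega(f(z_1),f(z_2)) \leq K_\Hc(z_1,z_2).
\end{align*}
So the entire task is to produce a holomorphic retraction $L:\Omega\to\Hc$ with $L\circ f=\mathrm{id}_\Hc$; then by distance-decreasing again,
\begin{align*}
K_\Hc(z_1,z_2) = K_\Hc\bigl(L(f(z_1)),L(f(z_2))\bigr) \leq K_\Omega(f(z_1),f(z_2)),
\end{align*}
which completes the proof.

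To build $L$, I would use a Hahn-Banach separation argument. Let $V=\Cb\times\{(0,\dots,0)\}$ and consider the closed convex set $C:=\{(z,0,\dots,0): \Imaginary(z)\leq 0\}\subset V$. By hypothesis $\Omega\cap V=\Hc\times\{0\}^{d-1}$, so $\Omega\cap C=\emptyset$. Since $\Omega$ is open and convex and $C$ is closed and convex, the geometric Hahn-Banach theorem produces a nonzero $\Rb$-linear functional $\phi:\Cb^d\to\Rb$ and a constant $c$ with $\phi>c$ on $\Omega$ and $\phi\leq c$ on $C$. The origin lies in both $\overline\Omega$ and $C$, so $c=0$.

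Next I would pin down the restriction of $\phi$ to $V$. Writing $\phi(z,0,\dots,0)=a\Real(z)+b\Imaginary(z)$ and using $\phi\leq 0$ on $C$ forces $a=0$ and $b\geq 0$. If $b=0$, then $\phi$ vanishes on $V$, contradicting $\phi>0$ at the interior point $(i,0,\dots,0)\in\Omega$; hence $b>0$ and, after rescaling, $\phi(z,0,\dots,0)=\Imaginary(z)$. Every $\Rb$-linear functional on $\Cb^d$ is the imaginary part of a unique $\Cb$-linear functional, so we get a complex linear $L:\Cb^d\to\Cb$ with $\Imaginary(L)=\phi$. The normalization on $V$ forces $L(e_1)=1$, equivalently $L(z,0,\dots,0)=z$, so $L\circ f=\mathrm{id}_\Hc$. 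Finally $\Imaginary(L)>0$ on $\Omega$ gives $L(\Omega)\subset\Hc$, so $L:\Omega\to\Hc$ is a holomorphic retraction as desired.

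The only potentially delicate step is verifying that the separating functional can be rescaled so that $\phi|_V=\Imaginary$, which is exactly why one needs to rule out $b=0$; this is where $\Cb$-properness plays no role and one only uses openness of $\Omega$ together with $\Hc\times\{0\}^{d-1}\subset\Omega$. Everything else is a standard application of Hahn-Banach and the distance-decreasing property of the Kobayashi metric.
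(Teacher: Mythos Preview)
Your proof is correct and follows essentially the same strategy as the paper: both arguments use distance-decreasing for the upper bound and then construct a complex-linear retraction $L:\Omega\to\Hc$ with $L\circ f=\mathrm{id}_\Hc$ via a separating real hyperplane. The paper phrases this geometrically (choose a real supporting hyperplane $H_{\Rb}\supset \Rb\times\{0\}$, straighten it by a complex-linear change of coordinates fixing the first axis, then project), while you phrase it via Hahn--Banach and complexification of the separating functional; these are the same construction in different clothing, and your version is arguably more explicit about why the retraction exists.
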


\begin{proof}
The distance decreasing property of the Kobayashi metric implies that 
\begin{align*}
K_{\Omega}(f(z_1), f(z_2)) \leq K_{\Hc}(z_1, z_2).
\end{align*} 

For the opposite inequality, let $H_{\Rb}$ 
be a real hyperplane so that $\Rb \times \{0\} \subset H_{\Rb}$ and $H_{\Rb} \cap \Omega = \emptyset$. 
Then there exists a linear map $A:\Cb^d \rightarrow \Cb^d$ so that 
\begin{align*}
A(z, 0,\dots, 0) = (z,0, \dots, 0)
\end{align*}
and 
\begin{align*}
A(H_{\Rb}) = \{ (z_1, \dots, z_d) \in \Cb^d : \Imaginary(z_1) =0\}.
\end{align*}
Consider the map $P: \Cb^d \rightarrow \Cb$ given by $P(z_1, \dots, z_d) = z_1$ and the map $F := P \circ A$. Then $F(\Omega) = \Hc$ and $F \circ f = \Id$. So we see that $K_{\Omega}(f(z_1), f(z_2)) \geq K_{\Hc}(z_1, z_2)$. 
\end{proof}
 
 \begin{proof}[Proof of Proposition~\ref{prop:infinite_type_poly_disk}]
By Lemma~\ref{lem:slices} and Lemma~ \ref{lem:infinite_type_rescale} we can find $t_n \rightarrow \infty$ and affine maps $A_n$ so that
\begin{align*}
A_n(\Omega, x+e^{-t_n}n_{x}) \rightarrow (\wh{\Omega}, u) \text{ in } \Xb_{d,0}
\end{align*}
where
\begin{align*} 
\Hc \times \Delta \subset \wh{\Omega} \cap (\Cb^2 \times \{ (0, \dots, 0)\}) \subset \{ (z_1, z_2) \in \Cb^2 : \Imaginary(z_1) > 0\}
\end{align*}
and 
\begin{align*} 
\left(\Cb \times \{(1,0,\dots,0)\}\right) \cap \wh{\Omega} = \emptyset.
\end{align*}
Then
\begin{align*} 
\Hc \times \{(1, 0, \dots, 0)\}  = \wh{\Omega} \cap (\Cb \times \{ (0, \dots, 0)\}).
\end{align*}
Consider the map $f:\Delta \times \Delta \rightarrow \wh{\Omega}$ given by 
\begin{align*}
f(z,w) = \left( i\frac{1+z}{1-z}, w, 0, \dots, 0\right).
\end{align*}
By Lemma~\ref{lem:half_plane_slice}
\begin{align*}
K_{\wh{\Omega}}\Big( (z_1, 0, \dots, 0), (z_2, 0, \dots, 0)\Big) = K_{\Hc}(z_1, z_2).
\end{align*}
So
\begin{align*}
K_{\Delta}(z_1, z_2) = K_{\wh{\Omega}}(f(z_1,0), f(z_2,0))
\end{align*}
for all $z_1, z_2 \in \Delta$. 

Let $x_t = f(0,\tanh(t)) = (i,\tanh(t), 0, \dots, 0)$. Since $\{i\} \times \Delta \times \{ (0, \dots, 0) \} \subset \wh{\Omega}$ we see that 
 \begin{align*}
K_{\wh{\Omega}}\Big(x_t,x_s\Big) \leq K_\Delta(\tanh(t), \tanh(s)) = \abs{t-s}.
\end{align*}
On the other hand, the complex line $L=\{ (z,1,0,\dots, 0) : z \in \Cb\}$ does not intersect $\wh{\Omega}$ and so $(i,1,0,\dots, 0) \notin \wh{\Omega}$. Then by Lemma 2.6 in~\cite{Z2014} we have 
 \begin{align*}
K_{\wh{\Omega}}\Big(x_t,x_s\Big) \geq \frac{1}{2}\abs{ \log \frac{ \norm{x_t-(i,1,0,\dots, 0)} }{ \norm{x_s-(i,1,0,\dots, 0)} }}= \frac{1}{2}\abs{ \log \frac{ 1- \tanh(t)}{1-\tanh(s)} }.
\end{align*}
Since 
\begin{align*}
\tanh(x) = 1 - \frac{1}{e^{2x}+1}
\end{align*}
when $x \in \Rb$, we see that 
 \begin{align*}
K_{\wh{\Omega}}\Big(x_t,x_s\Big) \geq \abs{t-s} - \frac{1}{2} \log(2)
\end{align*}
for $t,s \geq 0$. 
 \end{proof}
 
 \subsection{Non-existence of certain holomorphic maps} 
 
  \begin{theorem}\label{thm:prod_map}
 Suppose $\Omega \subset \Cb^d$ is a bounded convex domain with $C^{1,\alpha}$ boundary. Then there does not exist a holomorphic map $f: \Delta \times \Delta \rightarrow \Omega$ and $\kappa \geq 0$ so that
 \begin{enumerate}
 \item for every $z_1, z_2 \in \Delta$
 \begin{align*}
K_{\Delta}(z_1,z_2) - \kappa \leq K_{\Omega}\Big(f(z_1,0), f(z_2,0)\Big) \leq  K_{\Delta}(z_1,z_2) + \kappa,
 \end{align*}
 \item for every $s, t \geq 0$
 \begin{align*}
\abs{t-s} - \kappa \leq  K_{\Omega}\Big(f(0, \tanh(t)), f(0, \tanh(s))\Big) \leq \abs{t-s} + \kappa.
\end{align*}
\end{enumerate}
\end{theorem}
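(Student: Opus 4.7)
The plan is to assume such $f$ and $\kappa$ exist and to derive a contradiction by showing that every boundary cluster point of the holomorphic map $z\mapsto f(z,0):\Delta\to\Omega$ must lie in a single complex supporting hyperplane $H^*$ of $\Omega$; the maximum modulus principle will then force $f(\Delta\times\{0\})$ into $H^*$, contradicting $H^*\cap\Omega=\emptyset$. The key tool is Theorem~\ref{thm:gromov_prod}(2), applied in tandem with the product formula $K_{\Delta\times\Delta}((z,w),(z',w'))=\max\{K_\Delta(z,z'),K_\Delta(w,w')\}$ and the distance-decreasing property of $f$.

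Set $o=f(0,0)$. To identify $H^*$, take any sequence $s_n\to\infty$: condition (2) gives $K_\Omega(f(0,\tanh s_n),o)\geq s_n-\kappa\to\infty$, so after extracting a subsequence $f(0,\tanh s_n)\to y\in\partial\Omega$. For any second sequence $s_n'\to\infty$ with $f(0,\tanh s_n')\to y'\in\partial\Omega$, condition (2) also gives $K_\Omega(f(0,\tanh s_n),f(0,\tanh s_n'))\leq |s_n-s_n'|+\kappa$, whence
\[
\bigl(f(0,\tanh s_n)\,\big|\,f(0,\tanh s_n')\bigr)_o\ \geq\ \min(s_n,s_n')-\tfrac{3}{2}\kappa\ \longrightarrow\ \infty.
\]
Theorem~\ref{thm:gromov_prod}(2) then yields $T^{\Cb}_y\partial\Omega=T^{\Cb}_{y'}\partial\Omega$; call this common complex hyperplane $H^*$.

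Next I show every boundary cluster point of $z\mapsto f(z,0)$ lies in $H^*$. Suppose $z_n\in\Delta$ with $|z_n|\to 1$ and $f(z_n,0)\to w$; condition (1) forces $w\in\partial\Omega$. Set $s_n:=\arctanh|z_n|$, so that $K_\Delta(z_n,0)=K_\Delta(0,\tanh s_n)=s_n$. After passing to a further subsequence I may assume $f(0,\tanh s_n)\to y^*\in\partial\Omega$, which satisfies $T^{\Cb}_{y^*}\partial\Omega=H^*$ by the previous paragraph. The distance-decreasing property gives
\[
K_\Omega\bigl(f(z_n,0),f(0,\tanh s_n)\bigr)\ \leq\ K_{\Delta\times\Delta}\bigl((z_n,0),(0,\tanh s_n)\bigr)\ =\ s_n,
\]
while conditions (1) and (2) give $K_\Omega(f(z_n,0),o)\geq s_n-\kappa$ and $K_\Omega(f(0,\tanh s_n),o)\geq s_n-\kappa$. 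Hence
\[
\bigl(f(z_n,0)\,\big|\,f(0,\tanh s_n)\bigr)_o\ \geq\ \tfrac{1}{2}s_n-\kappa\ \longrightarrow\ \infty.
\]
Theorem~\ref{thm:gromov_prod}(2) then gives $T^{\Cb}_w\partial\Omega=T^{\Cb}_{y^*}\partial\Omega=H^*$, so $w\in H^*\cap\partial\Omega$. A standard subsequence argument now yields $d_{\Euc}(f(z,0),H^*)\to 0$ as $|z|\to 1$.

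For the final step, pick a complex affine functional $\ell$ with $H^*=\ell^{-1}(0)$ and set $\phi(z):=\ell(f(z,0))$. Then $\phi$ is holomorphic and bounded on $\Delta$ (since $\Omega$ is bounded) with $|\phi(z)|\to 0$ as $|z|\to 1$, so the maximum modulus principle forces $\phi\equiv 0$. Thus $f(\Delta\times\{0\})\subset H^*$; but $H^*$ is a complex supporting hyperplane of $\Omega$ (being the complex tangent hyperplane at a boundary point of a convex $C^1$ domain), so $H^*\cap\Omega=\emptyset$, contradicting $f(\Delta\times\{0\})\subset\Omega$. The main obstacle is the calibration $s_n=\arctanh|z_n|$ in the second step: this precise matching is what makes the distance-decreasing upper bound on the off-axis distance agree with the two quasi-isometric lower bounds, forcing large Gromov products and hence a single common tangent hyperplane for all boundary limits of the first axis.
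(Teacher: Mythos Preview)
Your proof is correct and follows essentially the same strategy as the paper's: identify a common complex tangent hyperplane $H^*$ via condition (2), then use the calibration $s_n=\arctanh|z_n|$ together with the product formula for $K_{\Delta\times\Delta}$ and the distance-decreasing property to force all boundary limits of $z\mapsto f(z,0)$ into $H^*$, and finish with a one-variable contradiction. The only minor difference is in the endgame: the paper works along radial rays $t\mapsto f(\tanh(t)e^{i\theta},0)$ and so only obtains radial limits $\lim_{r\to1}g(re^{i\theta})=0$, then invokes the Cauchy integral formula and dominated convergence; you instead handle arbitrary sequences $z_n$ with $|z_n|\to1$, obtain the full uniform limit $|\phi(z)|\to0$ as $|z|\to1$, and conclude directly by the maximum modulus principle, which is a bit cleaner.
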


\begin{remark} If $f: \Delta \times \Delta \rightarrow \Omega$ is holomorphic and induces an isometric embedding $(\Delta \times \Delta, K_{\Delta \times \Delta}) \rightarrow (\Omega, K_{\Omega})$ then for every $s, t \in \Rb$
 \begin{align*}
K_{\Omega}\Big(f(0, \tanh(t)), f(0, \tanh(s))\Big) = \abs{t-s}.
\end{align*}
So Theorem~\ref{thm:prod_map} implies Theorem~\ref{thm:prod_map_i}.
\end{remark}

\begin{proof} Let $o = f(0,0)$. For $e^{i\theta} \in \partial \Delta$ let 
\begin{align*}
\sigma_{\theta}(t) := f(\tanh(t)e^{i\theta},0).
\end{align*}
Now for $s,t \in \Rb$
\begin{align*}
K_{\Delta}( \tanh(t)e^{i\theta}, \tanh(s)e^{i\theta}) = \abs{t-s}
\end{align*}
and so 
\begin{align*}
\lim_{s,t \rightarrow \infty} (\sigma_\theta(t)|\sigma_\theta(s))_{o} \geq \frac{1}{2} \left( \lim_{s,t \rightarrow \infty}  \min\{t,s\} -3\kappa\right)= \infty.
\end{align*}
Thus by Theorem~\ref{thm:gromov_prod} for each $e^{i\theta} \in \partial \Delta$ there exists $x_\theta \in \partial \Omega$ so that 
\begin{align*}
\lim_{t \rightarrow \infty} d_{\Euc}(\sigma_{\theta}(t), T_{x_{\theta}}^{\Cb} \partial \Omega) = 0.
\end{align*}

Next let $\sigma(t) := f(0, \tanh(t))$. Then 
\begin{align*}
\lim_{s,t \rightarrow \infty} (\sigma(t)|\sigma(s))_{o} \geq\frac{1}{2} \left( \lim_{s,t \rightarrow \infty}  \min\{t,s\} -3\kappa\right)= \infty.
\end{align*}
Thus by Theorem~\ref{thm:gromov_prod} there exists $x \in \partial \Omega$ so that 
\begin{align*}
\lim_{t \rightarrow \infty} d_{\Euc}(\sigma(t), T_{x}^{\Cb} \partial \Omega) = 0.
\end{align*}

Now for $e^{i\theta} \in \partial \Delta$ and $t,s \geq 0$
\begin{align*}
 (\sigma_\theta(t)|\sigma(s))_{o} \geq \frac{1}{2} \left( t + s -2\kappa - K_{\Omega}(\sigma_{\theta}(t), \sigma(s))\right)
\end{align*}
and 
\begin{align*}
K_{\Omega}(\sigma_{\theta}(t), \sigma(s))& \leq K_{\Delta \times \Delta}( (\tanh(t) e^{i\theta},0), (0,\tanh(s)) ) \\
&= \max \{ K_{\Delta}(\tanh(t)e^{i\theta},0), K_{\Delta}(0, \tanh(s))\} = \max\{ t,s\}.
\end{align*}
Thus
\begin{align*}
\lim_{s,t \rightarrow \infty} (\sigma_\theta(t)|\sigma(s))_{o} \geq \frac{1}{2} \left( \lim_{s,t \rightarrow \infty} \min\{t,s\}-2\kappa \right)= \infty.
\end{align*}
So by Theorem~\ref{thm:gromov_prod}, $x_{\theta} \in T_{x}^{\Cb} \partial \Omega$ which implies that $T_{x_\theta}^{\Cb} \partial \Omega = T_{x}^{\Cb} \partial \Omega$.

Now by translating and rotating $\Omega$ we may assume that 
\begin{align*}
T_{x}^{\Cb} \partial \Omega = \{ (z_1, \dots, z_d) \in \Cb^d : z_1=0\}
\end{align*}
and 
\begin{align*}
\Omega \subset  \{ (z_1, \dots, z_d) \in \Cb^d : \Imaginary z_1> 0\}.
\end{align*}
Next consider the projection $P:\Cb^d \rightarrow \Cb$ given by $P(z_1, \dots, z_d) = z_1$ and the holomorphic function $g : \Delta \rightarrow \Cb$ given by 
\begin{align*}
g(z) = P(f(z,0)).
\end{align*}
Then $\Imaginary g(z) > 0$ for all $z \in \Delta$, $g$ is bounded, and 
\begin{align*}
\lim_{r \rightarrow 1} g(r e^{i\theta}) = 0
\end{align*}
for any $\theta \in \Rb$. But this is impossible by the Cauchy integral formula and the dominated convergence theorem. 
\end{proof}

\subsection{Proof of Theorem~\ref{thm:unif_tang_conv}} We can now prove Theorem~\ref{thm:unif_tang_conv}. Suppose that $\Omega \subset \Cb^d$ is a bounded convex domain with $C^\infty$ boundary. Assume $o \in \Omega$, $x \in \partial \Omega$, $M \geq 0$, and $T \in \Rb$ are so that 
\begin{align*}
\{ x+ e^{-2t} n_{x} : t > T \}  \subset \Aut(\Omega) \cdot B_{\Omega}(o; M).
\end{align*}
Suppose for a contradiction that $x$ has infinite type. Then we can find $t_n \rightarrow \infty$ and affine maps $A_n$ such that
\begin{align*}
A_n(\Omega, x+e^{-t_n}n_{x}) \rightarrow (\wh{\Omega}, u) \text{ in } \Xb_{d,0}
\end{align*}
and there exists a map $f: \Delta \times \Delta \rightarrow \wh{\Omega}$ with the properties in Proposition~\ref{prop:infinite_type_poly_disk}. Now by Theorem~\ref{thm:rescale} the domain $\wh{\Omega}$ is biholomorphic to $\Omega$. But this is impossible by Theorem~\ref{thm:prod_map}.

\section{The entire boundary has finite type}\label{sec:entire_bd_finite_type}

In this section we prove:

\begin{proposition}\label{prop:finite_type}
Suppose $\Omega \subset \Cb^d$ is a bounded convex domain with $C^\infty$ boundary. If there exists $x \in \partial \Omega$ with finite line type, $o \in \Omega$, and $\varphi_n \in \Aut(\Omega)$ so that $\varphi_n o \rightarrow x$ non-tangentially, then $\partial \Omega$ has finite line type. 
\end{proposition}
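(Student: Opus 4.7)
The plan is to argue by contradiction using Theorem~\ref{thm:prod_map}. If $\partial \Omega$ does not have finite line type, then by the upper semi-continuity of the line-type function on smoothly bounded convex domains, together with compactness of $\partial \Omega$, we can pick $y \in \partial \Omega$ of infinite line type; necessarily $y \neq x$, since $x$ has finite line type by hypothesis.

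First I would rescale at $y$. Working in a 2-dimensional complex slice through $y$ tangent to $\partial \Omega$ along a direction where the defining function vanishes to infinite order, and invoking Lemma~\ref{lem:infinite_type_rescale} together with Lemma~\ref{lem:slices}, I produce times $t_n \to \infty$ and complex affine maps $B_n \in \Aff(\Cb^d)$ such that
\begin{align*}
B_n(\Omega, y + e^{-t_n} n_y) \to (\wh\Omega_y, v) \quad \text{in } \Xb_{d,0},
\end{align*}
together with a holomorphic map $g : \Delta \times \Delta \to \wh\Omega_y$ satisfying the isometric and almost-isometric properties of Proposition~\ref{prop:infinite_type_poly_disk}(ii), which are exactly the hypotheses (i) and (ii) of Theorem~\ref{thm:prod_map}.

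Second, in order to apply Theorem~\ref{thm:prod_map} inside $\Omega$, I must show that $\wh\Omega_y$ is biholomorphic to $\Omega$. By Frankel's Theorem~\ref{thm:rescale}, this would follow from the existence of automorphisms $\psi_n \in \Aut(\Omega)$ such that $\psi_n^{-1}(y + e^{-t_n} n_y)$ remains in a compact subset of $\Omega$. The plan to construct such $\psi_n$ is to leverage the finite line type at $x$: a Pinchuk-style dilation of the non-tangential orbit $\varphi_n o \to x$ together with Theorem~\ref{thm:rescale} realizes $\Omega$ as biholomorphic to a Siegel-type model $\wh\Omega_x$ with an explicit one-parameter subgroup of dilation-translation symmetries; pulling these back produces automorphisms of $\Omega$ whose orbits sweep out a full non-tangential tube around $x$. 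These, composed appropriately with $\varphi_n$ and controlled via Lemma~\ref{lem:dist} and Proposition~\ref{prop:rough_geodesics}, yield automorphisms $\psi_n$ whose inverse images of $y + e^{-t_n} n_y$ are bounded in the Kobayashi metric. Composing the resulting biholomorphism $\wh\Omega_y \cong \Omega$ with $g$ then produces a holomorphic map $f : \Delta \times \Delta \to \Omega$ which, by the biholomorphic invariance of the Kobayashi metric, inherits the two properties in Theorem~\ref{thm:prod_map}, contradicting that theorem.

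The main obstacle is the second step, namely the construction of the automorphisms $\psi_n$. We start with only one non-tangential orbit, at the single finite-type point $x$, and we must enlarge this data into a family of automorphisms whose orbits come within bounded Kobayashi distance of the normal ray at an unrelated boundary point $y$. It is here that the finite line type at $x$ is indispensable: it forces $\Aut(\Omega)$ to contain a non-compact unipotent subgroup that survives rescaling, and it is this subgroup (propagated by $\varphi_n$) that can be made to track the normal ray at $y$. Executing this step rigorously requires care in comparing the two rescalings (at $x$ and at $y$) and in controlling the Kobayashi distortion along non-tangential tubes.
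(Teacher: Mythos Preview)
Your approach diverges from the paper's and contains a genuine gap in the second step.

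The paper does \emph{not} argue by contradiction and never rescales at a hypothetical infinite-type point $y$. Instead it rescales directly at the finite-type point $x$. Using Lemma~\ref{lem:dist_to_normal_line}, the non-tangential convergence $\varphi_n o \to x$ is upgraded to $\varphi_n k_n = x + e^{-2t_n} n_x$ for some $k_n$ in a fixed compact $K \subset \Omega$. Since $x$ has finite line type, a lemma of Yu allows one to choose explicit anisotropic dilations $\Lambda_n$ so that $\Lambda_n\Omega$ converges to a polynomial Siegel domain $\wh\Omega = \{ \Imaginary(z_1) > P(z_2,\dots,z_d)\}$. Frankel's Theorem~\ref{thm:rescale} then gives $\Omega \cong \wh\Omega$, and the proof concludes by citing two external results: $(\wh\Omega, K_{\wh\Omega})$ is Gromov hyperbolic (Theorem~1.7 of \cite{Z2014b}), hence so is $(\Omega, K_\Omega)$, hence $\partial\Omega$ has finite line type everywhere (Theorem~1.1 of \cite{Z2014}). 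Theorem~\ref{thm:prod_map} plays no role here.

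Your second step is the problem. You need automorphisms $\psi_n \in \Aut(\Omega)$ with $\psi_n^{-1}(y + e^{-t_n} n_y)$ bounded in $\Omega$, but the only dynamical information you have concerns orbits accumulating at $x$, not at the unrelated point $y$. You propose to manufacture such $\psi_n$ by first rescaling at $x$ to realize $\Omega \cong \wh\Omega_x$ and then pulling back the dilation/translation subgroup of $\wh\Omega_x$. But those symmetries of $\wh\Omega_x$ all fix the point at infinity and move points along the normal direction \emph{at $x$}; there is no mechanism, and none is supplied, by which composing them with the $\varphi_n$ produces orbits tracking the normal ray at $y$. Lemma~\ref{lem:dist} and Proposition~\ref{prop:rough_geodesics} give metric estimates on normal rays, not transitivity of $\Aut(\Omega)$ between distinct boundary faces. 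As written, this step is a wish, not an argument.

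Ironically, your intermediate construction ``$\Omega \cong \wh\Omega_x$ via Pinchuk-style dilation at $x$'' is exactly the paper's entire proof: once you have that biholomorphism, the cited Gromov-hyperbolicity results finish immediately, and the whole detour through $y$ and Theorem~\ref{thm:prod_map} becomes unnecessary.
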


The idea will be to first use a scaling argument to show that $\Omega$ is biholomorphic to a domain of the form 
\begin{align*}
\wh{\Omega} = \{ (z_1, \dots, z_d) \in \Cb^d : \Imaginary(z_1) > P(z_2, \dots, z_d) \}
\end{align*}
where $P: \Cb^{d-1} \rightarrow \Rb$ is non-degenerate non-negative convex polynomial and there exists  $\delta_1, \dots \delta_{d} \in (0,1/2)$ so that 
\begin{align*}
P( t^{\delta_1} z_1, \dots, t^{\delta_{d} }z_d) = t P(z_1, \dots, z_d)
\end{align*}
for all $t \geq 0$. By Theorem 1.7 in~\cite{Z2014b} the metric space $(\wh{\Omega}, K_{\wh{\Omega}})$ is Gromov hyperbolic. But then $(\Omega, K_\Omega)$ is Gromov hyperbolic and so $\partial \Omega$ has finite line type by Theorem 1.1 in~\cite{Z2014}.

Before starting the proof of the proposition we will need two lemmas:

\begin{lemma}\label{lem:dist_to_normal_line}
Suppose $\Omega \subset \Cb^d$ is a bounded domain with $C^1$ boundary. If $x \in \partial \Omega$ and $p_n \in \Omega$ converges to $x$ non-tangentially, then 
\begin{align*}
\lim_{n \rightarrow \infty} K_{\Omega}(p_n, N_x) = 0
\end{align*}
where $N_x = \Omega \cap (x + \Rb_{\geq 0} n_x)$.
\end{lemma}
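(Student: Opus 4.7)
The plan is to produce an explicit sequence $q_n \in N_x$ with $K_\Omega(p_n, q_n) \to 0$ and control this distance via a Euclidean ball comparison. Set $t_n := \langle p_n - x, n_x\rangle$ (the Euclidean inner product on $\Cb^d \cong \Rb^{2d}$) and take $q_n := x + t_n n_x$, the orthogonal projection of $p_n$ onto the real line through $x$ in the normal direction. The non-tangential hypothesis forces $t_n > 0$ for all large $n$ (with $t_n \to 0$), and the $C^1$ regularity of $\partial \Omega$ together with $t_n \to 0^+$ guarantees $q_n \in N_x = \Omega \cap (x + \Rb_{\geq 0} n_x)$ for large $n$.

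Next I would estimate $\delta_\Omega(q_n)$ using the $C^1$ regularity. In a local chart centered at $x$ in which $T_x\partial\Omega$ is the real hyperplane $\{\langle y, n_x\rangle = 0\}$, the boundary is a $C^1$ graph $\{y_1 = \phi(y')\}$ with $\phi(0) = 0$ and $\nabla \phi(0) = 0$. A short computation then gives $\delta_\Omega(q_n) = (1 - o(1))\,t_n$ as $n \to \infty$, so that $B(q_n, r_n) \subset \Omega$ for $r_n := \delta_\Omega(q_n)$, with $r_n / t_n \to 1$.

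Finally I would invoke the standard comparison with the Kobayashi metric on a Euclidean ball. Since the inclusion $B(q_n, r_n) \hookrightarrow \Omega$ is distance decreasing and $K_{B(q_n,r_n)}(q_n, \cdot) = \tanh^{-1}(\|\cdot - q_n\|/r_n)$, we get
\begin{align*}
K_\Omega(p_n, q_n) \;\leq\; K_{B(q_n, r_n)}(p_n, q_n) \;=\; \tanh^{-1}\!\left(\frac{\|p_n - q_n\|}{r_n}\right).
\end{align*}
Writing $\|p_n - q_n\|/r_n = (\|p_n - q_n\|/t_n)(t_n/r_n)$, the second factor tends to $1$ by the $C^1$ estimate above, and the first factor tends to $0$ by the non-tangential hypothesis, so the right-hand side tends to $0$ and hence $K_\Omega(p_n, N_x) \leq K_\Omega(p_n, q_n) \to 0$.

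The main obstacle I anticipate is the quantitative extraction of $\|p_n - q_n\| = o(t_n)$ from the hypothesis: this is where the precise content of \emph{non-tangentially} in the paper has to be combined with the $C^1$ modulus of continuity of $\partial \Omega$ to ensure both that $q_n$ genuinely lies in $N_x$ and that the tangential error dominates the normal depth in the required way. Once this is in hand, the remainder is routine bookkeeping with the ball comparison.
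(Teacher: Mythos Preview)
Your anticipated obstacle is not merely an obstacle but a genuine failure point, and not through any fault of your own. Non-tangential convergence means that $p_n$ approaches $x$ inside a fixed cone with vertex $x$ and axis $n_x$; this yields $\norm{p_n - q_n} \leq C\,t_n$ for some constant $C$, \emph{not} $\norm{p_n - q_n} = o(t_n)$. Consequently your ratio $\norm{p_n - q_n}/r_n$ is only bounded, and the ball comparison delivers at best a uniform upper bound on $K_\Omega(p_n, q_n)$, not convergence to zero. In fact the statement as written appears to be false already for $\Omega = \Delta$: take $x = 1$ and $p_n = 1 - \tfrac{1}{n} + \tfrac{i}{n}$, which converges non-tangentially to $1$, while $N_x = (-1,1)$ is the real diameter. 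Using the reflection $z\mapsto\bar z$ one finds $K_\Delta(p_n, N_x) = \tfrac{1}{2}K_\Delta(p_n,\bar p_n) \to \tfrac{1}{2}\arctanh(1/\sqrt{2}) > 0$.

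The paper's one-line proof, which just cites $k_\Omega(p;v)\leq \norm{v}/\delta_\Omega(p;v)$ and implicitly integrates along the segment to $N_x$, runs into the identical wall: the integral is bounded by a constant depending on the cone aperture, but does not vanish. What both your argument and the paper's actually establish is $\sup_n K_\Omega(p_n, N_x) < \infty$, and that is exactly what the application in Proposition~\ref{prop:finite_type} needs: one only requires $k_n := \varphi_n^{-1}q_n$ (for suitable $q_n \in N_x$) to stay in a fixed Kobayashi ball about $o$. So once you relax the claimed $o(t_n)$ to $O(t_n)$ and the target conclusion from ``$\to 0$'' to ``bounded'', your argument is correct and essentially equivalent to the paper's (the ball comparison and the infinitesimal estimate are two packagings of the same idea). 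One technical caveat on your side: if the cone is wide enough that $\norm{p_n - q_n} \geq r_n$, then $p_n \notin B(q_n,r_n)$ and your comparison is vacuous; this is repaired by centering instead at $x + \lambda t_n n_x$ for a suitably large fixed $\lambda$.
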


\begin{proof}
This follows immediately from the estimate 
\begin{align*}
k_{\Omega}(p;v) \leq \frac{\norm{v}}{\delta_{\Omega}(p;v)}
\end{align*}
on the Kobayashi metric. 
\end{proof}

We say a polynomial $P:\Cb^d \rightarrow \Rb$ is \emph{non-degenerate} if the set $\{P(z)=0\}$ contains no complex affine lines. From the proof of Proposition 2 in~\cite{Y1992} one has the following:

\begin{lemma}
Suppose that $f:\Cb^d \rightarrow \Rb$ is a $C^\infty$ non-negative convex function so that $f(0)=0$ and for every $v \in \Cb^d$ the function $z \in \Cb \rightarrow r(zv) \in \Rb$ does not vanish to infinite order at $z=0$. Then after making a linear change of coordinates there exists $\delta_1, \dots \delta_{d} \in (0,1/2)$ so that 
\begin{align*}
P(z_1, \dots, z_d) = \lim_{ t \rightarrow 0} \frac{1}{t} f( t^{\delta_1} z_1, \dots, t^{\delta_{d} }z_d) 
\end{align*}
where $P : \Cb^d \rightarrow \Rb$ is a non-degenerate non-negative convex polynomial and the convergence is in the $C^\infty$ topology. Moreover, 
\begin{align*}
P( t^{\delta_1} z_1, \dots, t^{\delta_{d} }z_d) = t P(z_1, \dots, z_d)
\end{align*}
for all $t \geq 0$. 
\end{lemma}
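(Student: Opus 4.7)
I propose a Catlin--D'Angelo-style multi-type construction tailored to the convex setting. The idea is to build a strictly decreasing complex-linear filtration $\Cb^d = V_1 \supsetneq V_2 \supsetneq \cdots \supsetneq V_{N+1} = \{0\}$ from the orders of vanishing of $f$ along complex lines, choose $\Cb$-linear coordinates adapted to this filtration, assign weights $\delta_i = 1/k_j \in (0, 1/2]$ to coordinates spanning $V_j \ominus V_{j+1}$, and take $P$ to be the weight-$1$ piece of the Taylor expansion of $f$ at $0$.

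\textbf{The filtration.} Since $f \ge 0$, $f(0) = 0$ and $f$ is convex, $\nabla f(0) = 0$, so
\begin{align*}
f(z) = \sum_{|\alpha|+|\beta|\ge 2} c_{\alpha\beta}\, z^\alpha \bar z^\beta.
\end{align*}
For $v \in \Cb^d \setminus \{0\}$ set $L(v) := \mathrm{ord}_{\zeta=0} f(\zeta v)$; the finite-line-type hypothesis gives $L(v) < \infty$, and upper semicontinuity of $L$ on $\Pb(\Cb^d)$ together with compactness forces $L_0 := \sup_v L(v) < \infty$. Set $V_1 := \Cb^d$. Given a complex subspace $V_j \subset \Cb^d$, put $k_j := \min\{L(v) : v \in V_j \setminus \{0\}\}$ and let $Q_j$ be the sum of the order-$k_j$ Taylor terms of $f$ restricted to $V_j$. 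Since $Q_j(v) = \lim_{t \searrow 0} t^{-k_j} f(tv)$ is a $C^\infty_{\mathrm{loc}}$-limit of non-negative convex functions, $Q_j$ is a real polynomial homogeneous of degree $k_j$ that is non-negative and convex; any non-zero non-negative homogeneous real polynomial must have \emph{even} degree (else $Q_j(-v) = -Q_j(v)$ forces negative values). Hence $E_j := \{Q_j = 0\} \subset V_j$ is a convex cone symmetric about $0$, and so a real-linear subspace. Define
\begin{align*}
V_{j+1} := E_j \cap i E_j,
\end{align*}
the largest complex-linear subspace of $V_j$ on which $Q_j$ vanishes identically. One verifies that $v \in V_{j+1}$ iff $\Cb v \subset E_j$ iff $Q_j(\zeta v) \equiv 0$ in $\zeta \in \Cb$ iff $L(v) > k_j$; in particular $V_{j+1} \subsetneq V_j$ and $k_{j+1} > k_j$, so the filtration terminates after finitely many steps. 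Choose complex coordinates $(z_1, \ldots, z_d)$ adapted to the $V_j$ and set $\delta_i := 1/k_j$ for each $z_i$ spanning $V_j \ominus V_{j+1}$; each $\delta_i \in (0, 1/2]$.

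\textbf{Convergence and properties of $P$.} Define
\begin{align*}
P(z) := \sum_{\sum_i \delta_i(\alpha_i+\beta_i) = 1} c_{\alpha\beta}\, z^\alpha \bar z^\beta;
\end{align*}
this is a polynomial since only finitely many multi-indices have weight at most $1$. Assuming the self-consistency of the filtration (that every non-zero $c_{\alpha\beta}$ has weight $\ge 1$), Taylor's theorem applied to $f$ up to a sufficiently large order $M$ with a smooth remainder gives, for $t \in (0,1]$,
\begin{align*}
\tfrac{1}{t}\, f\bigl(t^{\delta_1}z_1, \ldots, t^{\delta_d}z_d\bigr) = P(z) + \sum_{1 < \mathrm{wt}(\alpha\beta) \le M} c_{\alpha\beta}\, t^{\mathrm{wt}(\alpha\beta)-1}\, z^\alpha \bar z^\beta + t^\epsilon g_M(t,z),
\end{align*}
with $g_M$ smooth in $(t,z)$ and some $\epsilon > 0$ depending on $M$ and $\min_i \delta_i$; this yields $C^\infty_{\mathrm{loc}}$ convergence to $P$ as $t \searrow 0$. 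The weight-$1$ homogeneity of $P$ is built in, and non-negativity and convexity descend from the rescaled functions. For non-degeneracy: any $v \ne 0$ lies in some $V_j \setminus V_{j+1}$, and by definition of $V_{j+1}$ one has $Q_j(\zeta v) \not\equiv 0$; matching weights, $Q_j(\zeta v)$ is exactly the weight-$1$ contribution to $P(\zeta v)$, so $P(\zeta v) \not\equiv 0$. A complex affine line in $\{P = 0\}$ would, under the dilation $(z_i) \mapsto (s^{\delta_i} z_i)$ with $s \searrow 0$, limit by weighted homogeneity to a complex line through $0$ in $\{P = 0\}$ --- just excluded --- so $P$ is non-degenerate.

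\textbf{Main obstacle.} The key step, which the argument above takes for granted, is the \emph{self-consistency} of the construction: showing that every non-zero Taylor coefficient $c_{\alpha\beta}$ of $f$ has weight $\sum_i \delta_i(\alpha_i + \beta_i) \ge 1$. This rules out ``mixed'' monomials whose exponents span multiple levels of the filtration and whose combined weight would otherwise drop below $1$. It should follow from the inductive construction by observing that any such low-weight mixed monomial would have to pair a $V_{j+1}$-derivative against a $V_j \ominus V_{j+1}$-derivative whose contribution is killed by the definition $V_{j+1} = E_j \cap iE_j$; but making this precise requires careful bookkeeping with weights across the several levels of the filtration, and is the main technical content. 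The secondary technical point, the verification that each $V_{j+1}$ is genuinely complex-linear, relies on the chain $f \ge 0,\, \text{convex},\, f(0)=0 \Rightarrow Q_j$ is non-negative convex homogeneous of even degree $\Rightarrow E_j = \{Q_j = 0\}$ is a real-linear subspace $\Rightarrow V_{j+1} = E_j \cap iE_j$ is complex-linear, as sketched above.
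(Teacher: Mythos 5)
The paper does not actually prove this lemma; it is quoted from the proof of Proposition 2 in Yu's paper \cite{Y1992}, and your proposal is essentially an attempt to reconstruct Yu's linear multitype construction. The skeleton you set up is the right one, and the supporting observations (the $C^\infty_{\mathrm{loc}}$ limit $Q_j$ is non-negative, convex, homogeneous of even degree; its zero set is a symmetric convex cone, hence a real subspace; $V_{j+1}=E_j\cap iE_j$ is complex-linear; the filtration strictly decreases and terminates) are all correct. But the step you yourself flag as the ``main obstacle'' --- that every non-zero Taylor coefficient $c_{\alpha\beta}$ has weight $\sum_i\delta_i(\alpha_i+\beta_i)\ge 1$ --- is a genuine gap, and it is not a bookkeeping issue: it is the entire content of Yu's proposition. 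Without it the family $\tfrac1t f(t^{\delta_1}z_1,\dots,t^{\delta_d}z_d)$ need not even stay bounded as $t\searrow 0$, so nothing else in the proof survives. Moreover the mechanism you suggest cannot work as stated: the definition $V_{j+1}=E_j\cap iE_j$ only records that the \emph{pure} degree-$k_j$ terms $Q_j$ vanish on $V_{j+1}$; it says nothing about mixed monomials pairing $V_{j+1}$-variables against $V_j\ominus V_{j+1}$-variables, and these are exactly the dangerous ones. Indeed, the weight bound is \emph{false} for general smooth $f$ with the same line-type data (the linear multitype and the Catlin multitype disagree for non-convex hypersurfaces), so any correct completion must use convexity quantitatively at this point --- e.g.\ by restricting $f$ to two-dimensional slices spanned by vectors from different levels and using the fact that mixed Taylor coefficients of a non-negative convex function are dominated by geometric means of the pure ones, which is how Yu rules out the sub-weight-one monomials. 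Your non-degeneracy argument also silently relies on this same unproved fact (to identify the lowest-order part of $P(\zeta v)$ with $Q_j(\zeta v)$), so it inherits the gap.

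A secondary remark: your range $\delta_i\in(0,1/2]$ (rather than the stated $(0,1/2)$) is the honest output of this construction, since $k_1=2$ is possible for a convex function; this discrepancy lies with the statement, not with you. As written, though, the proposal should either supply the convexity estimate controlling the mixed low-weight monomials or simply cite \cite{Y1992}, as the paper does.
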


\begin{proof}[Proof of Proposition~\ref{prop:finite_type}]
By applying an affine transformation we can assume that $x=0$ and $T_0 \partial \Omega = \Rb \times \Cb^{d-1}$. Now there exists neighborhoods $U,V \subset \Rb$ of $0$ and a neighborhood $W \subset \Cb^{d-1}$ of $0$ and a $C^\infty$ function $f: V \times W \rightarrow \Rb$ so that 
\begin{align*}
\Omega \cap \Oc = \{ (x+iy, z) \in \Oc : y >f(x, z) \}
\end{align*}
where $\Oc = (V + iU) \times W$. By the above lemma we can make an linear change of coordinates in the last $d-1$ variables and find $\delta_1, \dots \delta_{d-1} \in (0,1/2)$ so that 
\begin{align*}
\frac{1}{t} f(0, t^{\delta_1} z_1, \dots, t^{\delta_{d-1} }z_d) 
\end{align*}
converges locally uniformly in the $C^\infty$ topology to a non-degenerate non-negative convex polynomial $P: \Cb^{d-1} \rightarrow \Rb$. Moreover, 
\begin{align*}
P( t^{\delta_1} z_1, \dots, t^{\delta_{d-1} }z_d) = t P(z_1, \dots, z_d)
\end{align*}
for all $t \geq 0$.

Now suppose that $\varphi_n o \rightarrow 0$ non-tangentially. Then by Lemma~\ref{lem:dist_to_normal_line} there exists a compact set $K \subset \Omega$, elements $k_n \in K$, and a sequence $t_n \rightarrow \infty$ so that 
\begin{align*}
\varphi_n k_n = x + e^{-2t_n} n_x = (e^{-2t_n}, 0, \dots, 0).
\end{align*}
Consider the linear maps
\begin{align*}
\Lambda_n : = \begin{pmatrix} e^{2t_n} & & & \\ & e^{2t_n/\delta_1} & & \\ & & \ddots & \\ & & & e^{2t_n/\delta_{d-1}} \end{pmatrix}.
\end{align*}
Notice that $\Lambda_n \varphi_n k_n = (1, 0, \dots, 0)$. Moreover $\Lambda_n \Omega$ converges in the local Hausdorff topology to the domain 
\begin{align*}
\wh{\Omega} = \{ (z_1, \dots, z_d) \in \Cb^d : \Imaginary(z_1) > P(z_2, \dots, z_d) \}.
\end{align*}
Thus $\Omega$ is biholomorphic to $\wh{\Omega}$ by Theorem~\ref{thm:rescale}. Then by Theorem 1.7 in~\cite{Z2014b} the metric space $(\wh{\Omega}, K_{\wh{\Omega}})$ is Gromov hyperbolic. But then $(\Omega, K_\Omega)$ is Gromov hyperbolic and so $\partial \Omega$ has finite line type by Theorem 1.1 in~\cite{Z2014}.
\end{proof}

 \section{Proof of Theorem~\ref{thm:main}}
 
First suppose $\Omega \subset \Cb^d$ is a bounded convex domain with $C^\infty$ boundary and there exists $x, y \in \Lc(\Omega)$ with $T_{x}^{\Cb} \partial \Omega \neq T_{y}^{\Cb} \partial \Omega$. Then $\Aut(\Omega)$ contains a hyperbolic element $\varphi$ by Theorem~\ref{thm:exist_hyp}. Then by Theorem~\ref{thm:hyp_unif_conv} there exists $z \in H_{\varphi}^+$, $M \geq 0$, and $T \in \Rb$ so that 
\begin{align*}
\{ z + e^{-2t} n_{z} : t > T \} \subset \cup_{k \in \Zb} \  \varphi^k B_{\Omega}(o; M).
\end{align*}
Now by Theorem~\ref{thm:unif_tang_conv} the point $z \in \partial \Omega$ has finite type. Then by Proposition~\ref{prop:finite_type} the entire boundary has finite type. Then by Bedford and Pinchuk's characterization of polynomial ellipsoids (Theorem~\ref{thm:BP} in the introduction) we see that $\Omega$ is biholomorphic to a polynomial ellipsoid.

Next suppose that $\Omega$ is biholomorphic to a domain
\begin{align*}
\Cc: = \{ (w,z) \in \Cb \times \Cb^{d-1} : \Imaginary(w) > p(z)\}
\end{align*}
where $p:\Cb^{d-1} \rightarrow \Rb$ is a weighted homogeneous balanced polynomial.

Notice that 
\begin{align*}
\Cc \cap (\Cb \times \{0\} ) = \Hc \times \{0\}.
\end{align*}
Moreover $\Aut(\Cc)$ contains real translations in the first variable and a dilation. Thus
\begin{align*}
\Hc \times \{0\} \subset \Aut(\Cc) \cdot (i,0,\dots, 0). 
\end{align*}
So there exist a proper holomorphic map $\varphi: \Delta \rightarrow \Omega$ so that $\varphi(\Delta) \subset \Aut(\Omega) \cdot \varphi(0)$. This implies that 
\begin{align*}
\overline{\varphi(\Delta)} \setminus \varphi(\Delta) \subset \Lc(\Omega).
\end{align*}

Now suppose for a contradiction that $\Lc(\Omega)$ is contained in a single closed complex face of $\Omega$. Then there exists an $x \in \partial \Omega$ so that 
\begin{align*}
\lim_{r \rightarrow 1} d_{\Euc}(\varphi(re^{i\theta}), T_x^{\Cb} \partial \Omega) = 0
\end{align*}
for all $\theta \in \Rb$. Now by translating and rotating $\Omega$ we may assume that 
\begin{align*}
T_{x}^{\Cb} \partial \Omega = \{ (z_1, \dots, z_d) \in \Cb^d : z_1=0\}
\end{align*}
and 
\begin{align*}
\Omega \subset  \{ (z_1, \dots, z_d) \in \Cb^d : \Imaginary z_1> 0\}.
\end{align*}
Next consider the projection $P:\Cb^d \rightarrow \Cb$ given by $P(z_1, \dots, z_d) = z_1$ and the holomorphic function $g : \Delta \rightarrow \Cb$ given by 
\begin{align*}
g(z) = P(\varphi(z)).
\end{align*}
Then $\Imaginary g(z) > 0$ for all $z \in \Delta$, $g$ is bounded, and 
\begin{align*}
\lim_{r \rightarrow 1} g(r e^{i\theta}) = 0
\end{align*}
for any $\theta \in \Rb$. But this is impossible by the Cauchy integral formula and the dominated convergence theorem. 

\section{Boundary extensions of isometric embeddings}

We now apply Theorem~\ref{thm:gromov_prod} to prove the following boundary extension theorem:

\begin{theorem}\label{thm:cont_ext} Suppose $\Omega_1 \subset \Cb^{d_1}$ and $\Omega_2 \subset \Cb^{d_2}$ are bounded convex domains with $C^{1,\alpha}$ boundaries. If $\Omega_2$ is $\Cb$-strictly convex, then every isometric embedding $f:(\Omega_1,K_{\Omega_1}) \rightarrow (\Omega_2,K_{\Omega_2})$ extends to a  continuous map $\overline{f}: \overline{\Omega_1} \rightarrow \overline{\Omega_2}$. 
\end{theorem}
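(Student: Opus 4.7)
The plan is to exploit the corollary stated just before the theorem: when $\Omega_2$ is $\Cb$-strictly convex with $C^{1,\alpha}$ boundary, two sequences in $\Omega_2$ converge to the same boundary point if and only if their Gromov product tends to infinity. Since $f$ is an isometric embedding, it preserves the Kobayashi distance and hence the Gromov product, so this characterization can be pulled back through $f$ to control boundary accumulation.

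First I would define the extension on $\partial \Omega_1$. Given $x \in \partial \Omega_1$, pick any $p_n \in \Omega_1$ with $p_n \to x$ in the Euclidean topology. Because $\Omega_1$ is bounded (hence contains no complex affine lines), Proposition~\ref{prop:completeness} implies $(\Omega_1, K_{\Omega_1})$ is Cauchy complete, so $K_{\Omega_1}(o, p_n) \to \infty$ for any fixed $o \in \Omega_1$. Since $f$ is an isometric embedding, $K_{\Omega_2}(f(o), f(p_n)) \to \infty$, so $f(p_n)$ leaves every Kobayashi ball and hence every compact subset of $\Omega_2$; by compactness of $\overline{\Omega_2}$ any subsequential Euclidean limit lies in $\partial \Omega_2$. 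Theorem~\ref{thm:gromov_prod}(1) applied in $\Omega_1$ gives $(p_n|p_m)_o \to \infty$, and by isometric invariance $(f(p_n)|f(p_m))_{f(o)} \to \infty$. Then the corollary for $\Cb$-strictly convex $\Omega_2$ forces any two subsequential limits of $f(p_n)$ to coincide, so $\bar f(x) := \lim f(p_n) \in \partial \Omega_2$ exists. Interleaving two sequences $p_n, q_n \to x$ gives a third sequence $\to x$, so $\bar f(x)$ is independent of the chosen sequence.

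Next I would verify continuity of $\bar f: \overline{\Omega_1} \to \overline{\Omega_2}$. Continuity at interior points of $\Omega_1$ is immediate from continuity of $f$. For $x \in \partial \Omega_1$, let $z_n \in \overline{\Omega_1}$ with $z_n \to x$. For each $n$, by construction of $\bar f$ we may select $p_n \in \Omega_1$ with $d_{\Euc}(p_n, z_n) \le 1/n$ and $d_{\Euc}(f(p_n), \bar f(z_n)) \le 1/n$ (taking $p_n = z_n$ when $z_n \in \Omega_1$, and choosing $p_n$ along a defining sequence for $\bar f(z_n)$ when $z_n \in \partial \Omega_1$). Then $p_n \to x$ in $\overline{\Omega_1}$, so by the first step $f(p_n) \to \bar f(x)$ in $\overline{\Omega_2}$, which forces $\bar f(z_n) \to \bar f(x)$.

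The main obstacle is the uniqueness statement for the limit $\bar f(x)$: without the $\Cb$-strict convexity hypothesis on $\Omega_2$, Theorem~\ref{thm:gromov_prod}(2) only guarantees that two subsequential limits share the same complex tangent hyperplane, which is not enough to get a well-defined extension. The $\Cb$-strict convexity is precisely what promotes ``same complex face'' to ``same point,'' so all the remaining steps are essentially formal once the existence of the limit in $\partial \Omega_2$ and its uniqueness have been established.
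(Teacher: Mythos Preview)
Your proposal is correct and follows essentially the same route as the paper: preserve the Gromov product under the isometric embedding, use Theorem~\ref{thm:gromov_prod}(1) on $\Omega_1$ and Theorem~\ref{thm:gromov_prod}(2) (via the $\Cb$-strict convexity corollary) on $\Omega_2$ to get a well-defined boundary limit, and then run the same approximation argument for continuity at boundary points. If anything, you are slightly more careful than the paper in explicitly verifying that subsequential limits of $f(p_n)$ lie in $\partial\Omega_2$ before invoking Theorem~\ref{thm:gromov_prod}(2).
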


\begin{proof}
We first show that  $\lim_{z \rightarrow x} f(z)$ exists when $x \in \partial \Omega_1$. Since $f$ is an isometric embedding we see that 
\begin{align*}
(z|w)_o = (f(z)|f(w))_{f(o)}
\end{align*}
for any $z,w,o \in \Omega_1$. So if $z_n \rightarrow x$, $w_n \rightarrow x$, $f(z_n) \rightarrow y_1$, and $f(w_n) \rightarrow y_2$ then
\begin{align*}
\lim_{n \rightarrow \infty} (f(z_n)|f(w_n))_{f(o)} = \lim_{n \rightarrow \infty} (z_n|w_n)_o = \infty
\end{align*}
by part (1) of Theorem~\ref{thm:gromov_prod}. Thus by part (2) of Theorem~\ref{thm:gromov_prod} we see that $T_{y_1}^{\Cb} \partial \Omega = T_{y_2}^{\Cb} \partial \Omega$. But then, since $\Omega_2$ is $\Cb$-strictly convex, we see that $y_1= y_2$. Thus implies that $\lim_{z \rightarrow x} f(z)$ exists. 

Then define $\overline{f}: \overline{\Omega}_1 \rightarrow \overline{\Omega}_2$ by
\begin{align*}
\overline{f}(x) := \left\{ \begin{array}{ll} 
f(x) & \text{ if } x \in \Omega_1 \\ 
\lim_{z \rightarrow x} f(z) & \text{ if } x \in \partial \Omega_1
\end{array} \right.
\end{align*}
We claim that $\overline{f}$ is continuous. So suppose that $z_n \rightarrow z$ in $\overline{\Omega}_1$. If $z \in \Omega_1$ then  
\begin{align*}
\overline{f}(z_n) = f(z_n) \rightarrow f(z)= \overline{f}(z).
\end{align*}
So assume that $z \in \partial \Omega_1$. Then, to avoid cases, approximate $z_n$ by $z_n^\prime \in \Omega_1$ so that 
\begin{align*}
\max\left\{ d_{\Euc}(z_n, z_n^\prime), d_{\Euc}( \overline{f}(z_n), f(z_n^\prime))\right\} < 1/n.
\end{align*}
Then $z_n^\prime \rightarrow z$ so (by definition of $\overline{f}$)
\begin{align*}
\lim_{n \rightarrow \infty} f(z_n^\prime) = \overline{f}(z).
\end{align*}
But by construction 
\begin{align*}
\lim_{n \rightarrow \infty} f(z_n^\prime) = \lim_{n \rightarrow \infty} \overline{f}(z_n).
\end{align*}
So $\overline{f}(z_n) \rightarrow \overline{f}(z)$ and thus $\overline{f}$ is continuous. 
\end{proof}

\bibliographystyle{alpha}
\bibliography{complex_kob}

\end{document}